\newtheorem{thm}{Theorem}
\newtheorem{propo}[thm]{Proposition}
\newtheorem{lem}[thm]{Lemma}
\newtheorem{cor}[thm]{Corollary}
\newtheorem{ex}[thm]{Example}
\newtheorem{rmk}[thm]{Remark}
\newcommand{\Q}{\mathbb{Q}}
\newcommand{\Z}{\mathbb{Z}}
\newcommand{\R}{\mathbb{R}}
\newcommand{\C}{\mathbb{C}}
\newcommand{\F}{\mathbb{F}}
\newcommand{\Hamil}{\mathbb{H}}
\newcommand{\disc}{\Delta}
\newcommand{\order}{\mathcal{O}}
\newcommand{\alg}{A}
\newcommand{\G}{\mathbb{G}}
\newcommand{\transp}[1]{\hspace{0.08em}{}^t\hspace{-0.08em}{#1}}
\def\PP{\mathbb{P}}
\def\Co{\mathcal{C}}
\def\B{{\rm B}}
\def\Alpha{{\mathcal A}}
\def\PP{{\mathbb P}}
\newcommand{\p}{\mathfrak{p}}
\newcommand{\Prm}{\mathfrak{P}}
\newcommand{\lieg}{\mathfrak{g}}
\newcommand{\liek}{\mathfrak{k}}
\newcommand{\liea}{\mathfrak{a}}
\newcommand{\liet}{\mathfrak{t}}
\newcommand{\aritgp}{\Gamma}
\newcommand{\mat}{\mathrm{M}}
\newcommand{\centre}{\mathrm{Z}}
\newcommand{\Id}{\mathrm{Id}}
\newcommand{\vertiii}[1]{{\vert\kern-0.25ex\vert\kern-0.25ex\vert #1 \vert\kern-0.25ex\vert\kern-0.25ex\vert}}
\def\N{{\rm N}}
\DeclareMathOperator{\GL}{GL}
\DeclareMathOperator{\SL}{SL}
\DeclareMathOperator{\liesl}{\mathfrak{sl}}
\DeclareMathOperator{\lieso}{\mathfrak{so}}
\DeclareMathOperator{\liesu}{\mathfrak{su}}
\DeclareMathOperator{\liesp}{\mathfrak{sp}}
\DeclareMathOperator{\SU}{SU}
\DeclareMathOperator{\U}{U}
\DeclareMathOperator{\SO}{SO}
\DeclareMathOperator{\nrd}{nrd}
\DeclareMathOperator{\trd}{trd}
\DeclareMathOperator{\tr}{tr}
\DeclareMathOperator{\rd}{rd}
\DeclareMathOperator{\diag}{diag}
\DeclareMathOperator{\Hom}{Hom}
\DeclareMathOperator{\Ad}{Ad}
\DeclareMathOperator{\rk}{rk}
\def\T{{\mathsf T}}
\def\n#1{{\Vert #1 \Vert_2}}
\def\V{{\mathbb V}}
\newtheorem*{TheoremSN}{Theorem}
\date{\today}
\author{Christian Maire}
\address{Institut FEMTO-ST, Univ. Bourgogne Franche-Comt\'e, 15B Av. des Montboucons, F-25030 Besan\c con c\'edex, France}
\email{christian.maire@univ-fcomte.fr}
\author{Aurel Page}
\address{INRIA, Univ. Bordeaux, CNRS, IMB, UMR 5251, F-33400 Talence, France}
\email{aurel.page@inria.fr}
\title{Codes from unit groups of division algebras over number fields}
\subjclass{94B40, 11R52, 11T71, 94B75}
\keywords{Number Field Codes,  Division Algebras over Number Fields,  Asymptotically Good Codes}
\thanks{The authors would like to thank the Warwick Mathematics
Department and the Department of Mathematics at Cornell University for providing
a stimulating research atmosphere. We also thank Xavier Caruso for suggesting
the use of the sum-rank distance, and an anonymous referee for their valuable feedback.
We also thank the EPSRC for financial support via the EPSRC Programme Grant
EP/K034383/1 LMF: L-functions and modular forms.
CM was also partially supported by the Region Bourgogne Franche-Comté,
the ANR project FLAIR
(ANR-17-CE40-0012) and the EIPHI Graduate School (ANR-17-EURE-0002).}
\begin{abstract}
  Lenstra and Guruswami described number field analogues of the algebraic geometry
  codes of Goppa. Recently, the first author and Oggier generalised
  these constructions to other arithmetic groups: unit groups in number fields
  and orders in division algebras; they suggested to use unit groups in
  quaternion algebras but could not completely analyse the resulting codes.
  We prove that the noncommutative unit group construction yields asymptotically good families of
  codes for the sum-rank metric from division algebras of any degree, and we
  estimate the size of the alphabet in terms of the degree.
\end{abstract}
\begin{document}

\maketitle

\vspace{-1cm}

\section{Introduction}

Number field codes, introduced by Lenstra~\cite{Lenstra} and independently
rediscovered by Guruswami~\cite{Guruswami}, are number field analogues of the geometric codes
of Goppa~\cite{Goppa} built from curves over finite fields. In these
original constructions, the codes are constructed from the ring of
integers of a number field.

\smallskip

In~\cite{Maire-Oggier}, the first author and Oggier extended these ideas: they
explained how to construct codes from any arithmetic group, and they analysed
the parameters of the resulting codes in the case of the unit group of the ring
of integers of a number field, and in the case of an order in a division
algebra.
In every case, it was possible to obtain asymptotically good families of codes
using towers of number fields with bounded root discriminant.
The multiplicative group of an order in a
quaternion algebra was also considered with a partial analysis and the
question of constructing asymptotically good codes from these groups was left
open, short of having adapted techniques, as explained in Remark 12
of~\cite{Maire-Oggier}.

\smallskip

We completely analyse the noncommutative unit group codes
in any degree, by using a metric that is adapted to those groups.
We also measure the minimal distance of the code in terms of the sum-rank
distance introduced by Mart\'{i}nez-Pe\~{n}as~\cite{sumrank}, which also yields Hamming distance codes
by using the columns of the matrices as the alphabet.
We  analyse how the size of the alphabet varies with the degree of the
algebra, and we prove the following theorem.

\begin{TheoremSN}
  For all~$d\ge 2$, there exists a family of asymptotically good number field
  codes for the sum-rank distance,
  each obtained from the group of units of reduced norm~$1$ in a maximal order
  in a division algebra of degree~$d$,
  over a fixed
  alphabet~$\mat_d(\F_p)$, where~$\log p = c\log d + O(\log\log d)$
  and~$c>0$ is a constant.
\end{TheoremSN}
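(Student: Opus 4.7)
Following the Maire--Oggier template, I would
fix a tower $K_1 \subset K_2 \subset \cdots$ of number fields of bounded root
discriminant (e.g.\ via Golod--Shafarevich), a division algebra $\alg$ of
degree $d$ over $K_1$, and set $\order_m$ a maximal $\order_{K_m}$-order of
$\alg_m = \alg \otimes_{K_1} K_m$, with $\aritgp_m = \order_m^1$. For a
rational prime $p$ to be determined in terms of $d$, let $S_m$ be the set of
degree-one primes $\Prm$ above $p$ at which $\alg$ splits; a Chebotarev
argument in the tower, combined with control of the splitting of $p$ in
$K_1$ and of the ramification of $\alg$, arranges $|S_m| \gg [K_m:\Q]$.
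Reduction modulo each $\Prm \in S_m$ provides a homomorphism
\[
\pi_m : \aritgp_m \longrightarrow \prod_{\Prm \in S_m} \SL_d(\F_p).
\]
Fixing a norm on $\alg \otimes_\Q \R$ and a radius $T$, I would define the
code $C_m = \pi_m(\aritgp_m \cap B_T)$ of length $n_m = |S_m|$ over the
alphabet $\mat_d(\F_p)$, equipped with the sum-rank metric.

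\textbf{Distance via the product formula.} For $z = xy^{-1}$ with $x, y$
distinct preimages, $z - 1 \neq 0$ is invertible in the division algebra
$\alg_m$, so $\nrd(z-1) \in \order_{K_m} \setminus \{0\}$. At each
$\Prm \in S_m$, after identifying $\order_m \otimes \order_{K_m,\Prm} \cong
\mat_d(\order_{K_m,\Prm})$, Smith normal form over this local DVR yields
$v_\Prm(\nrd(z-1)) \geq d - r_\Prm$, where $r_\Prm$ is the $\F_p$-rank of
$\pi_\Prm(z) - \Id$. Writing $R_{\mathrm{sr}}(z) = \sum_\Prm r_\Prm$ for the
sum-rank, the product formula for $K_m$ gives
\[
\prod_{v \mid \infty} |\sigma_v(\nrd(z-1))|^{n_v}
\;\geq\; p^{d n_m - R_{\mathrm{sr}}(z)}.
\]
Since $z$ lies in a ball of radius $O(T)$ and each reduced norm is a
degree-$d$ polynomial in matrix entries, the archimedean side is at most
$(O(T))^{d[K_m:\Q]}$. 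Using $n_m \asymp [K_m:\Q]$ I deduce
$R_{\mathrm{sr}}(z)/(dn_m) \geq 1 - O(\log T/\log p)$, so the relative
sum-rank distance is bounded below as long as $\log T$ is a fixed small
fraction of $\log p$. Taking $R_{\mathrm{sr}}(z) > 0$ in this bound also
forces injectivity of $\pi_m$ on $\aritgp_m \cap B_T$.

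\textbf{Rate and balancing.} The lattice $\aritgp_m$ sits in
$G_\infty := (\alg \otimes_\Q \R)^{\nrd = 1}$ with covolume controlled by
Prasad's volume formula; the bounded root discriminant yields a bound
$\log \vol(\aritgp_m \backslash G_\infty) \leq C\, n_m\, f(d)$ for an
explicit polynomial $f$ (dominated by the factor
$\disc(K_m)^{(d^2-1)/2}$). By Eskin--McMullen lattice-point counting, for
$T$ large one has $|\aritgp_m \cap B_T| \asymp \vol(B_T)/\vol(\aritgp_m
\backslash G_\infty)$, with $\log \vol(B_T) = \Theta(n_m\, g(d)\, \log T)$
for another explicit polynomial $g$ (the growth rate being the half-sum of
positive roots of $\SL_d$). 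Dividing $\log|C_m|$ by the ambient log-size
$n_m d^2 \log p$, the rate is bounded below as soon as $\log T \gtrsim
f(d)/g(d)$. Choosing $\log T = c_1 \log p$ with $c_1$ small enough
simultaneously ensures positive rate and positive distance, provided
$\log p \gtrsim f(d)/g(d)$.

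\textbf{Main obstacle.} \emph{The hard part} is to show that
$f(d)/g(d) = O(\log d)$, so that the alphabet bound
$\log p = c \log d + O(\log \log d)$ is reached, rather than a worse
polynomial bound such as $\log p = \Omega(d)$ that the crudest estimates
would suggest. This requires precise tracking of the $d$-dependence in
Prasad's covolume (both in the discriminant exponent and the local
factors), in the exponential growth rate of balls in $G_\infty$, and in
the implied constants of the Eskin--McMullen counting. A secondary
obstacle is to produce, uniformly in $d$, a division algebra $\alg$ of
degree $d$ over $K_1$ whose ramification is sparse enough that a prime
$p$ of norm only polylogarithmic in $d$ splits completely in both $K_1$
and the splitting field of $\alg$, so that the Chebotarev step actually
yields $|S_m| \asymp n_m$.
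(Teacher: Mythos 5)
Your high-level template matches the paper's: a tower of number fields with bounded root discriminant, a division algebra of degree $d$ ramified at few small primes, reduction at a set $S$ of split primes above $p$, the product formula for the sum-rank distance, and a covolume-vs-ball-volume estimate for the rate. You also correctly isolate the genuine difficulty, namely controlling the $d$-dependence of the covolume and the ball-growth rate well enough that $\log p$ can be taken to be $O(\log d)$ rather than polynomial in $d$.

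However, there are two genuine gaps. First, Eskin--McMullen is the wrong tool for the rate estimate. Their counting asymptotic $|\aritgp \cap B_T| \sim \vol(B_T)/\vol(\aritgp\backslash G_\infty)$ is a statement for a \emph{fixed} lattice $\aritgp$ as $T\to\infty$. In the present construction the ball radius $T$ is held fixed (it ends up being $\Theta(\log d)$, depending only on $d$), while the lattice $\aritgp_m$ varies as $m\to\infty$; Eskin--McMullen gives no uniformity in $m$, so the estimate you need does not follow from it. The paper instead uses a simple averaging argument (Lemma~\ref{lem:boundratevol}): integrating $|\aritgp\cap c\B|$ over $c$ in a fundamental domain for $G/\aritgp$ gives $\mu(\B)$, so there exists a translate $c\B$ containing at least $\mu(\B)/\mu(G/\aritgp)$ lattice points. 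This is unconditional, valid for any radius, and uniform in the lattice, which is precisely what is needed here.

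Second, you flag the $d$-dependence of $f(d)/g(d)$ as the main obstacle but do not resolve it, and the resolution is where most of the paper's new content lies. The paper establishes the lower bound for $\mu(\B(t))$ by working out the $KAK$ integration formula with all normalisation constants explicit (Proposition~\ref{prop:integralKAK}), computing $\mu(K)$ via Macdonald's formula with explicit constants (Proposition~\ref{prop:volK}, giving $\log\mu(K) = -\tfrac{n}{4}(ed)^2\log d + O(d^2)$), and proving a combinatorial lemma (Lemma~\ref{lem:intervals}) that selects a sub-box of the simplex $\{\sum a_i = 0,\ |a_i|\le t\}$ on which the integrand $\prod_{i<j}\sinh(a_i-a_j)^{ne^2}$ is both bounded away from zero and close to its maximal exponential rate. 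Combining these yields $\log\mu(\B(t)) \ge -\tfrac{3n(de)^2}{2}\log d + \tfrac{n(de)^2}{200}t + O(d^2)$, which, against the covolume bound $\tfrac{1}{2}\log(\Delta_A/\Delta_F) + O(n\log d)$ from Prasad's formula, is what makes $t=\Theta(\log d)$ and hence $\log p = \Theta(\log d)$ achievable. Without this explicit bookkeeping the argument only shows positive rate and distance for some $p$, not the claimed polylogarithmic alphabet size.
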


We did not try to get the sharpest possible bounds, but the
asymptotic of~$\log p$ in terms of~$d$ is probably correct apart from the value
of the constant~$c$.
Our analysis uses tools from the theory of arithmetic groups: the metric we 
choose is closely related to the canonical metric on the associated symmetric
space, and we use Macdonald's~\cite{Macdonald} and Prasad's~\cite{Prasad} volume
formulas. We also rely on a classical integration formula in $KAK$ coordinates
for the Haar measure of a semisimple Lie group (see for instance 
\cite[Proposition 5.28]{Knapp}), but the analysis of the dependence on the degree required computing
normalisation factors that we could not find in the literature.
Our method should generalise to arithmetic lattices in other semisimple groups.

\smallskip

One nice feature of the noncommutative case is that the formula
for the covolume of the arithmetic group is nicer than the one for the
regulator of a number field, since the relevant values of the zeta function
are evaluations at integers greater than~$1$.
For fixed~$d$, it is even possible to give closed formulas for the
parameters of the code using our techniques, and we carry out these computations
in the case~$d=2$. This is why we derive exact formulas whenever possible, and
then deduce an asymptotic analysis of the interesting quantities.

\smallskip

For the sake of comparison, we revisit the additive case and carry out the
analysis of the dependence on the degree. We obtain the following result:
although the additive and multiplicative groups have very different geometry,
the asymptotic behaviour of the code is the same.

\begin{TheoremSN}
  For all~$d\ge 2$, there exists a family of asymptotically good number field
  codes for the sum-rank distance, each obtained from the additive group of a
  maximal order in a division algebra of degree~$d$, over a fixed
  alphabet~$\mat_d(\F_p)$, where~$\log p = \frac{1}{2}\log d + O(\log\log d)$.
\end{TheoremSN}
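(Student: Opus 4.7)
The plan is to adapt the strategy of the multiplicative case, replacing the logarithmic metric on the image of the unit group by the Frobenius norm on $A\otimes_\Q\R$ and the Macdonald--Prasad volume machinery by Hadamard's inequality on reduced norms. Fix $d\ge 2$ and a prime $p$ whose size in terms of $d$ will be optimised at the end. By an argument analogous to the one in \cite{Maire-Oggier} (a Golod--Shafarevich-type tower over a suitable base in which $p$ splits completely), one obtains a tower $F_0\subset F_1\subset\cdots$ of number fields of bounded root discriminant $\le D_0$ in which $p$ splits completely at every step. On each $F_i$ construct a degree-$d$ division algebra $A_i$ split at all primes above $p$, ramified at exactly two auxiliary primes of norm $O_d(1)$ with invariants $\pm 1/d$. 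Let $\order_i\subset A_i$ be a maximal order; the discriminant formula gives $\mathrm{covol}(\order_i)^{1/n_i}\le V_0$, with $(\log V_0)/d^2$ bounded independently of $d$, where $n_i=[F_i:\Q]$.

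For $T>0$, set $B_i(T)=\{z\in\order_i:\|z_v\|_F\le T\text{ at every archimedean }v\}$ and define the code $\Co_i(T)\subset\mat_d(\F_p)^{n_i}$ as the image of $B_i(T)$ under the diagonal reduction map modulo the primes $\Prm_{i,j}\subset\order_i$ above the $n_i$ primes $\p_{i,j}$ of $F_i$ dividing $p$, so that $\order_i/\Prm_{i,j}\cong\mat_d(\F_p)$. Each $A_{i,v}$ is $\mat_d(\R)$ or $\mat_d(\C)$, of real dimension $d^2$ or $2d^2$; by Stirling, the Frobenius ball of radius $T$ in $\R^N$ has volume $\asymp(cT/\sqrt N)^N$, hence $\vol(B_i(T))\asymp (cT/d)^{d^2 n_i}$, giving rate at least $(\log(T/d)+O(1))/\log p - o_{n_i}(1)$ by a standard convex-body lattice count. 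For $0\ne z\in B_i(T)-B_i(T)$, the local Smith normal form at $\Prm_{i,j}$ yields $v_{\p_{i,j}}(\nrd(z))\ge d-\rk(z\bmod\Prm_{i,j})$; complete splitting of $p$ allows summing to $v_p(|N_{F_i/\Q}(\nrd(z))|)$, which is a well-defined nonnegative integer since $A_i$ is a division algebra. Hadamard's inequality at each archimedean place gives $|\nrd(z_v)|\le\|z_v\|_F^{\,d}/d^{d/2}$, hence $|N_{F_i/\Q}(\nrd(z))|\le(2T)^{dn_i}/d^{dn_i/2}$, so the relative sum-rank distance is at least $1-\log_p(2T)+\tfrac{1}{2}\log_p d - o_{n_i}(1)$.

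Rate $\ge R_0$ and relative distance $\ge\delta_0$ are therefore simultaneously attainable provided $d\cdot p^{R_0}\lesssim T\lesssim\sqrt d\cdot p^{1-\delta_0}$, which forces $(1-R_0-\delta_0)\log p\ge\tfrac{1}{2}\log d+O(1)$. Taking $R_0=\delta_0=(\log\log d)/\log d$, strictly positive for each fixed $d$ so that the family is asymptotically good as $n_i\to\infty$, produces $\log p=\tfrac{1}{2}\log d+O(\log\log d)$, as asserted. The main difficulty is the arithmetic construction: arranging simultaneously the bounded root discriminant of the tower, complete splitting of the chosen $p$ at every step, and a degree-$d$ division algebra on each $F_i$ whose ramification contributes only $O(1)$ to the root covolume; this is a refinement of the constructions in \cite{Maire-Oggier}. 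Once this is achieved, the rate, distance and alphabet-size analysis above is a direct computation with volumes of Euclidean balls and Hadamard's inequality.
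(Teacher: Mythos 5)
Your proof is essentially the paper's argument, with the same skeleton and the same final optimisation giving $\log p = \tfrac{1}{2}\log d + O(\log\log d)$. The volume lower bound for the Euclidean ball (Stirling on $\V_n$) is the content of Proposition~\ref{prop:volumecodeadditif} and Corollary~\ref{cor:volumeadditif}; the arithmetic setup (bounded root discriminant tower with $p$ totally split, division algebra ramified at two small auxiliary primes) is exactly Proposition~\ref{prop:goodnf} together with the choice of $A$ ramified at $\p_2\mid 2$ and $\p_3\mid 3$; the covolume bound is the discriminant formula plus $\N(\delta_A)^{1/(nd)}\le 6$. Two places where your phrasing diverges from the paper are cosmetic rather than substantive. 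First, you bound the sum-rank contribution at each $\p$ by Smith normal form: $v_\p(\nrd z)\ge d-\rk(z\bmod\Prm_\p)$, then pass to $v_p(N_{F/\Q}\nrd z)$; the paper instead uses the index $|\order/(\p\order+z\order)|=q^{d(d-r)}$ (Lemma~\ref{lem:colnorm}) and $\N(z)=|\order/z\order|$. Both give $\N(z)\ge q^{N-d_R(\Co)}$. Second, you invoke Hadamard plus AM--GM for $|\nrd(z_v)|\le \|z_v\|_F^{\,d}/d^{d/2}$; the paper cites the equivalent estimate $|N_{F/\Q}\nrd(x)|^d\le (\T_2(x)/dn)^{d^2n/2}$ from~\cite[Ch.~2, \S3]{Page}. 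One small imprecision worth noting: you ask for the two auxiliary ramified primes to have norm $O_d(1)$, which is stronger than needed and not automatic along an arbitrary tower; what the argument actually requires (and what the paper uses) is the weaker bound $\N(\delta_A)^{1/(n_id)}=O(1)$, which holds because a prime of $F_i$ above $\ell$ has norm at most $\ell^{n_i}$. Your final parametrisation by $R_0=\delta_0=(\log\log d)/\log d$ differs from the paper's choice of $t$ and $p$ but leads to the same asymptotic; both are fine.
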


In the context of coding theory, our results are mostly of theoretical value,
for two reasons. First, the code construction is not effective in its current
form (see Lemma~\ref{lem:boundratevol}, the element~$c\in G$ in the definition
of the code is not explicit); it seems like an interesting problem to derive an
effective version of our construction and an efficient decoding algorithm.
Second, the notion of asymptotically good
codes is rather coarse; it would be valuable to prove more precise estimates,
maybe in special cases in the spirit of our Section~\ref{sec:quaternion}, since
fixing the degree allows one to get explicit formulas for many relevant
parameters.
We hope that our results shed some extra light on the family of Goppa-style
constructions, and that they provide a starting ground for 
the study of related codes,
for instance over function fields, by changing the groups, by optimising the
shape of the ball ($\B$ in the notations of Section~\ref{sec:construction}), or
by considering polyalphabetic versions of our construction.

\medskip

The article is organised as follows. We first recall, in
Section~\ref{sec:construction}, the general construction of
arithmetic group codes following Maire--Oggier~\cite{Maire-Oggier}, and we review the basic properties
of central simple algebras over number fields in Section~\ref{sec:csa}.
In Section~\ref{section:Cartan}, we carry out the crucial volume computations
that we need to estimate the parameters of the codes. We analyse the multiplicative
construction in Section~\ref{sec:multconstr}, where we prove our main theorem
and give a detailed study of the quaternion case. Finally, we revisit the
additive construction in Section~\ref{sec:additiveconstr}.
 
\section{The construction}\label{sec:construction}

We recall the general construction as in \cite{Maire-Oggier}.
Given
\begin{enumerate}[(i)]
  \item a locally compact group~$G$ and a compact subset~$\B\subset G$,
  \item a lattice~$\aritgp\subset G$, i.e. a discrete subgroup with a
    fundamental domain of finite Haar measure,
  \item a map~$\Theta\colon \aritgp \to \Alpha^N$, where~$\Alpha$ is an alphabet
    (i.e. a finite set) and~$N\ge 1$ is an integer,
\end{enumerate}
we consider a code $\Co=\Theta(\aritgp\cap c\B)$, where~$c\in G$ is such
that~$|\aritgp\cap c\B|$ is maximal. There may be more than one such~$c$; we
simply pick an arbitrary~$c$. 
Codewords of  $\Co$ are elements of~$\Alpha^N$.
The main tool to estimate the rate of such codes is an idea
of Lenstra, which we express in the following lemma.

\begin{lem}\label{lem:boundratevol}
  Let~$\mu$ be a Haar measure on~$G$.
  If~$\Theta|_{\aritgp\cap c\B}$ is injective, then
  \[
    |\Co| \ge \frac{\mu(\B)}{\mu(G/\aritgp)}\cdot
  \]
\end{lem}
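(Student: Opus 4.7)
The proof will follow Lenstra's averaging idea: since $c$ is chosen so that $|\aritgp \cap c\B|$ is maximal, this quantity is at least its average as $c$ ranges over a fundamental domain for $\aritgp$, and this average is easy to compute.

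First I would reduce the count of codewords to the count of lattice points in the translated box: by the injectivity hypothesis, $|\Co| = |\aritgp \cap c\B|$. Next, I would let $F \subset G$ be a measurable fundamental domain for the left action of $\aritgp$ on $G$, so $\mu(F) = \mu(\aritgp\backslash G)$. Writing $f(g) = |\aritgp \cap g\B|$, one checks $f(\gamma g) = f(g)$ for all $\gamma \in \aritgp$, so $f$ descends to $\aritgp\backslash G$. By maximality of $c$,
\[
|\aritgp \cap c\B| \;\ge\; \frac{1}{\mu(F)}\int_F f(g)\, d\mu(g).
\]

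The crux is then to evaluate the integral. Swapping sum and integral (Tonelli),
\[
\int_F f(g)\,d\mu(g) \;=\; \sum_{\gamma\in\aritgp}\int_F \mathbf{1}_{g\B}(\gamma)\,d\mu(g) \;=\; \sum_{\gamma\in\aritgp} \mu\bigl(F\cap \gamma \B^{-1}\bigr),
\]
using the equivalence $\gamma \in g\B \Leftrightarrow g \in \gamma\B^{-1}$. By left-invariance of $\mu$ each term equals $\mu(\gamma^{-1}F \cap \B^{-1})$, and since the translates $\{\gamma^{-1}F\}_{\gamma\in\aritgp}$ partition $G$, the sum telescopes to $\mu(\B^{-1})$. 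Finally, since $G$ admits a lattice it is unimodular, hence $\mu(\B^{-1})=\mu(\B)$; identifying $\mu(\aritgp\backslash G)$ with $\mu(G/\aritgp)$ (again by unimodularity) yields the claimed inequality.

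I do not expect any serious obstacle here: the argument is a one-line averaging/pigeonhole, and the only point requiring care is the use of unimodularity to equate $\mu(\B)$ with $\mu(\B^{-1})$ and $\mu(\aritgp\backslash G)$ with $\mu(G/\aritgp)$, both of which follow from the existence of the lattice $\aritgp$.
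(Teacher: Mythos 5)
Your proof is correct and is precisely the standard Lenstra averaging/pigeonhole argument; the paper itself does not spell this out and simply cites \cite[Lemma 1]{Maire-Oggier}, but that is exactly the argument you give, with the right care taken about unimodularity (which follows since $G$ admits the lattice $\aritgp$) to identify $\mu(\B^{-1})$ with $\mu(\B)$ and $\mu(\aritgp\backslash G)$ with $\mu(G/\aritgp)$.
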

\begin{proof}
  This is~\cite[Lemma 1]{Maire-Oggier}.
\end{proof}

One natural way to construct such a code  from an arithmetic group is as follows. Given
 a number field~$F$, a linear algebraic group~$\G$ defined over~$F$
and an arithmetic group~$\aritgp\subset \G(\Z_F)$,
we can consider~$G = \G(F\otimes_\Q \R)$ in which~$\aritgp$,
under a mild extra condition on~$\G$,
is a lattice via the
natural embedding~$\aritgp \subset \G(F) \subset G$, by a theorem of
Borel and Harish-Chandra~\cite{BorelHC}. Then define
\[
  \Theta\colon \aritgp \to \prod_{\p\in S}\G(\Z_F/\p) \to \Alpha^N,
\]
where~$S$ is a finite set of prime ideals of~$F$ and~$\Alpha$ is related to the
groups~$\G(\Z_F/\p)$.
For instance, if for all~$\p \in S$ we
have an embedding~$\G(\Z_F/\p)\hookrightarrow \GL_d(\F_{q_0})$, then
we can take~$\Alpha = \mat_d(\F_{q_0})$, or~$\Alpha = \F_{q_0}^d$ in which case
the map~$\Theta$ picks the columns of the corresponding matrices.

We describe the interesting parameters of such codes.
The \emph{length} of a code~$\Co\subset \Alpha^N$ is~$N$, and we let~$q =
|\Alpha|$. The \emph{Hamming distance}~$d_H(x,y)$ between two
elements~$x,y\in\Alpha^N$ is the number of components in which they differ.
The \emph{rate} of~$\Co$ is ${\frac{\log_q|\Co|}{N}}$ with~$\log_q(t)=\log t/\log q$, and
the \emph{minimum Hamming distance} $d_H(\Co)$ of $\Co$ is the minimum Hamming
distance among all pairs of distinct codewords.

Families of codes $(\Co_i)_i$ with length $N_i \rightarrow \infty$ that satisfy
\[
\liminf_i \frac{\log_{q}|\Co_i|}{N_i} > 0,  {\rm and \ } \ \liminf_i \frac{d_H(\Co_i)}{N_i} > 0,
\]
are called {\em asymptotically good codes} (see for example~\cite{TVN} for a
good explanation of the notion in the context of algebraic geometry codes).

In the case of matrix alphabets, Mart\'{i}nez-Pe\~{n}as~\cite{sumrank}
introduced a finer distance: the sum-rank distance. Assume~$\Alpha = \mat_d(\F)$
where~$\F$ is a finite field;
if~$\Co\subset\Alpha^{N_0}$ is a code, then there is an attached
code~$\Co_{col}\subset\Alpha'^{N_0d}$ where~$\Alpha' = \F^d$, obtained by picking
the columns of the matrices.
The \emph{length} of the code~$\Co$ is defined to be~$N = N_0\cdot d$ and
we let~$q = |\F|^d$, so that they match the corresponding notions
for~$\Co_{col}$.
The \emph{sum-rank distance}~$d_R(x,y)$ between two elements~$x,y\in\Alpha^{N_0}$ is
\[
  d_R(x,y) = \sum_{i=1}^{N_0} \rk(x_i-y_i).
\]
Note that we have~$d_R(x,y) \le d_H(x,y)$ since the rank of a matrix is bounded
by the number of nonzero columns.
The \emph{rate} of~$\Co$ is ${\frac{\log_q|\Co|}{N}}$, and the \emph{minimum
sum-rank distance}~$d_R(\Co)$ of $\Co$ is the minimum sum-rank
distance among all pairs of distinct codewords.
Families of codes $(\Co_i)_i$ with length $N_i \rightarrow \infty$ that satisfy
\[
\liminf_i \frac{\log_{q}|\Co_i|}{N_i} > 0,  {\rm and \ } \ \liminf_i \frac{d_R(\Co_i)}{N_i} > 0,
\]
are called {\em asymptotically good codes for the sum-rank distance}.
If~$\Co_i$ is asymptotically good for the sum-rank distance,
then~$(\Co_i)_{col}$ is asymptotically good (for the Hamming distance).

In the arithmetic case considered above, the code length~$N$ is equal to~$d\cdot
|S|$, and~$q = q_0^d$.

The reason why we should expect arithmetic group codes to have large minimum distance
is the following. If two elements~$\gamma_1,\gamma_2 \in \aritgp$ correspond to
codewords that are close for~$d_R$ or~$d_H$, then they will be congruent modulo many
prime ideals~$\p$, and therefore~$\gamma_1^{-1}\gamma_2$ will be large in the
real embeddings, 
which will contradict the fact that they are bounded by the compact set~$\B$.

We are going to consider two instances of this construction: one where~$\G$ is the group of
units of reduced norm one of a division algebra, and one where~$\G$ is
the additive group of a division algebra. The precise specification of
the code is given in Section~\ref{sec:multconstr} for the
multiplicative case and Section~\ref{sec:additiveconstr} for the additive case.

\section{Central Simple Algebras: what we need}\label{sec:csa}

 Our main references  are \cite{PR} and \cite{Reiner}, however the literature
 is abundant. All our rings will be associative and
 unital, and all our algebras will be finite-dimensional.
 We write~$A^\times$ for the units of a ring~$A$.
 If~$R$ is a commutative ring and~$d\ge 1$ an integer, let~$\mat_d(R)$
 be the $d\times d$ matrix algebra with coefficients in $R$.

\subsection{Generalities}
Let~$\alg$ be an algebra over a field~$F$. The \emph{centre}
of~$A$ is~$\centre(\alg)=\{a\in \alg \mid \ xa=ax, \forall x \in \alg\}$. 
We say
that~$\alg$ is \emph{central} if~$\centre(\alg) = F$, that~$\alg$ is a
\emph{division algebra} if~$\alg^\times = \alg \setminus\{0\}$, and that~$\alg$
is \emph{simple} if the only two-sided ideals of~$\alg$ are~$\{0\}$ and~$\alg$.
Every division algebra is simple.
The dimension of a central simple algebra over~$F$ is a square~$d^2$, and~$d$ is
called the \emph{degree} of~$\alg$ over~$F$.
A central simple algebra of degree~$2$ is called a \emph{quaternion algebra}.

\smallskip

Let~$\alg$ be a central simple algebra of degree~$d$ over a field~$F$.
There exists a finite extension~$L/F$ and an isomorphism~$\alg\otimes_F L \cong
\mat_d(L)$ of algebras over~$L$; let~$\varphi$ be such an isomorphism.
For all~$x\in\alg$, the determinant~$\det\varphi(x)$ (resp. the
trace~$\tr\varphi(x)$) is in~$F$ and is independent
of~$L$ and~$\varphi$; it is called the \emph{reduced norm}~$\nrd(x)$ (resp.
the \emph{reduced trace}~$\trd(x)$) of~$x$.
The map~$\trd\colon\alg\to F$ is~$F$-linear, the map~$\nrd\colon \alg\to F$ is
multiplicative, and~$\alg^\times = \{x\in\alg\mid \nrd(x)\neq 0\}$.

\begin{ex} \label{exemple:Hamil} Let~$\Hamil$ be the algebra of Hamiltonian quaternions:
  \[
    \Hamil = \R + \R i + \R j + \R k\text{ with }i^2=j^2=k^2=ijk=-1.
  \]
  Then~$\Hamil$ is a central division algebra over~$\R$.
  For all~$x,y,z,t\in\R$ we have~$\nrd(x+yi+zj+tk) = x^2+y^2+z^2+t^2$
  and~$\trd(x+yi+zj+tk) = 2x$.
\end{ex}

\subsection{Over  number fields}
 Let $F$ be a number field of degree $n$ over $\Q$. Denote by~$\Z_F$ the ring of
 integers of $F$, and by $\PP_\infty$ the set of infinite places of $F$. If
 $I\subset \Z_F$ is a nonzero  ideal of $\Z_F$, let~$\N(I)=|\Z_F/I|$ be its 
 norm. Let~$\alg$ be a central simple algebra of degree~$d$ over~$F$.

\subsubsection{}
Let~$x\in\alg$. Then the \emph{absolute norm}~$\N(x)\in\Q$ of~$x$ is the absolute value
of the determinant of the matrix of left multiplication by~$x$ on~$\alg$, seen
as a matrix in~$\mat_{d^2n}(\Q)$. We have~$\N(x) = |N_{F/\Q}(\nrd(x))^d|$, where $N_{F/\Q}$ denotes the norm in $F/\Q$.
When $A$ is a division algebra, $\N(x)=0$ if and only if $x=0$.

\subsubsection{} 
An \emph{order} in $\alg$ is a subring~$\order\subset\alg$ that is
finitely generated as a~$\Z$-module and such that~$\order F = \alg$.
If~$\order$ is an order and~$x\in\order$ is such that~$\N(x)\neq 0$, then~$\N(x)
= |\order/x\order|$.

Let~$\order$ be a maximal order (i.e. not properly contained in a larger order),
and let~$\p$ be a prime ideal of~$\Z_F$; let $q=\N(\p)$. We say that
\begin{enumerate}[\quad $\bullet$]
  \item $\p$ is \emph{unramified} in~$\alg$
    if~$\order/\p\order\cong\mat_d(\F_q)$;
  \item $\p$ is \emph{ramified} in~$\alg$ otherwise.
\end{enumerate}
The number of primes that ramify in~$\alg$ is finite.

Let~$\p$ be a prime ideal of~$\Z_F$,
and denote by $F_{\p}$ the completion of $F$ at $\p$. Then there exists an isomorphism 
\[
\alg\otimes_F F_{\p} \cong \mat_{d_\p}(D_\p),
\]
where $D_\p$ is a central division algebra over $F_{\p}$; let us write
$\dim_{F_\p}D_\p=e_\p^2$. We then have $e_\p d_\p=d$, the prime~$\p$ is ramified if and
only if $e_\p >1$, and more generally we have~$\order/\Prm \cong
\mat_{d_\p}(\F_{q^{e_\p}})$ where~$\Prm$ is the unique maximal two-sided ideal
of~$\order$ containing~$\p$.

\smallskip

Let~$\sigma \in \PP_\infty$ be an infinite place of~$F$.
If~$\sigma$ is complex, then there is an isomorphism
\[
  \alg\otimes_F F_\sigma \cong \mat_d(\C),
\]
and in this case, we say that $\sigma$ is \emph{unramified}.
If~$\sigma$ is real, then there is an isomorphism
\begin{enumerate}[\quad $\bullet$]
  \item $\alg\otimes_F F_\sigma \cong \mat_d(\R)$, in which case we say
    that~$\sigma$ is \emph{unramified} in~$\alg$, or
  \item $\alg\otimes_F F_\sigma \cong \mat_{d/2}(\Hamil)$, in which case we say
    that~$\sigma$ is \emph{ramified} in~$\alg$.
\end{enumerate}

\smallskip

In all cases,  we fix an isomorphism extending~$\sigma$ as above, and still call
it~$\sigma\colon \alg\otimes_F F_\sigma \cong \mat_{d_\sigma}(D_\sigma)$,
where~$D_\sigma$ is a central division algebra of degree~$e_\sigma\in\{1,2\}$ over~$F_\sigma$,
and we write~$n_\sigma = [F_\sigma : \R]$.

\subsubsection{}
We denote by $\disc_F$ the
absolute value of the discriminant of $F$ and by $\rd_F$ its
\emph{root discriminant}~$\disc_F^{1/n}$.
Recall that if $K/F$ is an unramified extension of number fields then $\rd_K=\rd_F$.
The \emph{reduced
discriminant}~$\delta_A$ of $A$ is defined as
\[
  \delta_A = \prod_{\p} \p^{d(1-1/e_\p)} = \prod_{\p} \p^{d - d_\p},
\]
where the product is over all prime ideals of~$\Z_F$. The \emph{absolute
discriminant}~$\disc_A$ of $A$ is defined as
\[
\disc_A = \disc_F ^{d^2} \N(\delta_A)^d.
\]

\section{Volumes} \label{section:Cartan}

\subsection{Cartan decomposition}
Let~$D$ be a division algebra over~$\R$, so that~$D$ is isomorphic to one
of~$\R$, $\C$ or~$\Hamil$. 
We write~$F = \centre(D)$ the centre of $D$, $e$ the
degree of~$D$ over~$F$, and~$n = [F\colon\R]$.
Hence:  $e=n=1$ if $D=\R$;  $e=1$, $n=2$ if $D=\C$; and $e=2$, $n=1$ if $D=\Hamil$.
We will later apply the results of this section to the completion~$\mat_d(D)$ of a central 
simple algebra over a number field at a real or complex place~$\sigma$, and
the notations for those degrees will become~$e_\sigma, n_\sigma$ and~$d_\sigma$.

There exists a unique $\R$-linear and 
anti-multiplicative
involution~$x\mapsto \bar{x}$ on~$D$ such that for all~$x\in D^\times$ we
have~$x\bar{x}\in \R_{> 0}\subset D$: it is called the \emph{canonical
involution} of~$D$. Explicitly, the canonical involution is the identity
on~$\R$, the complex conjugation on~$\C$, and the quaternionic conjugation
on~$\Hamil$. In this section, we fix~$d\ge 1$.

\bigskip

Consider the semisimple Lie group
\[
  G = \SL_d(D) = \{g\in\mat_d(D)\mid \nrd(g)=1\},
\]

the following maximal compact subgroup of~$G$
\[
  K = \SU_d(D) = \{g\in G \mid g \transp{\bar{g}} = \Id\},
\]
and define
\[
  A = \left\{\begin{pmatrix}\exp(a_1) &  & 0 \\ & \ddots & \\ 0 & &
  \exp(a_d)\end{pmatrix} : a_1,\dots,a_d \in\R \mid \sum_{i=1}^d a_i = 0\right\}\subset G.
\]

\begin{ex}
  If~$D = \Hamil$ and~$d=2$, then
  \[
    \SL_d(D) = \SL_2(\Hamil) =
    \left\{ g \in \mat_2(\Hamil) \mid \nrd (g) = 1
    \right\},
  \]
  where for~$a,b,c,d\in\Hamil$ we have
  \[
    \nrd \begin{pmatrix}
        a & b \\
        c & d
    \end{pmatrix}
    =
    \left\{
      \begin{array}{ll}
        \nrd(ad)              & \text{if }c=0 \\
        \nrd(ac^{-1}dc - bc)  & \text{if }c\neq 0.
      \end{array}
    \right.
  \]
  Note that the set
  $
    \left\{
      \begin{pmatrix}
        a & b \\
        c & d
      \end{pmatrix}
      \in \mat_2(\Hamil)
      \mid ad-bc=1
    \right\}
  $
  is not stable under multiplication.

  In this case we have
  \[
    K = \SU_d(D) = \SU_2(\Hamil) =
    \left\{
      \begin{pmatrix}
        a & b \\
        c & d
      \end{pmatrix}
      \in \mat_2(\Hamil)
      \mid a\bar{a}+b\bar{b} = c\bar{c}+d\bar{d} = 1,\ a\bar{c}+b\bar{d}=0
    \right\}.
  \]
\end{ex}

Let  $\centre_K(A)=\{a\in A \mid \ ak=ka\ \forall k\in K\}$ be the centraliser
of $A$ in $K$. Explicitly, we have
\begin{enumerate}[\quad $\bullet$]
  \item if~$D=\R$, then~$\centre_K(A)\cong \{\pm 1\}^{d-1}$ is the group of
    diagonal matrices with coefficients~$\pm 1$ on the diagonal and
    determinant~$1$;
  \item if~$D=\C$, then~$\centre_K(A)\cong \U(1)^{d-1}$ is the group of diagonal
    matrices with coefficients in~$\C$ of absolute value~$1$ on the diagonal and
    determinant~$1$;
  \item if~$D=\Hamil$, then~$\centre_K(A) \cong \SL_1(\Hamil)^d \cong
    \SU_2(\C)^d$ is the group of diagonal matrices with coefficients in~$\Hamil$
    of reduced norm~$1$ on the diagonal.
\end{enumerate}

Let us recall the Cartan decomposition of $G$ that will be useful for the
computation of the different volumes (see for example \cite[Chapter V, \S 4]{Knapp}).

\begin{thm}[Cartan decomposition]\label{thm:cartan}
  Let~$G = \SL_d(D)$, $K$ and~$A$ be as above. Then we have~$G = KAK$.
  In the decomposition~$g=k_1ak_2$ of an element~$g\in G$ with~$k_i\in K$ and~$a
  = \diag(\exp(a_1),\dots,\exp(a_d))\in A$, the~$a_i$ are unique up to permutation.
  Moreover, let~$S\subset G$ be the subset of elements~$g\in G$ such that
  the~$a_i$ are distinct. Then for all~$g$ in~$S$, the pair~$(k_1,k_2)$ is
  uniquely determined up to changing~$(k_1,k_2)$ into~$(k_1z^{-1},zk_2)$ with~$z\in\centre_K(A)$.
\end{thm}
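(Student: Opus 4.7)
The plan is to derive the Cartan decomposition from two classical tools available uniformly over $D\in\{\R,\C,\Hamil\}$: the polar decomposition and the spectral theorem for positive Hermitian matrices. The quaternionic case will be handled via the splitting $\Hamil\otimes_\R\C\cong\mat_2(\C)$, which reduces the relevant facts to the complex setting while preserving the normalisations of $\nrd$.

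For the existence of $G=KAK$, I would start with $g\in G$ and form $h=g\transp{\bar g}$, which is positive Hermitian with $\nrd(h)=\nrd(g)\,\nrd(\transp{\bar g})=1$. The spectral theorem yields some $u\in U_d(D)$ and positive reals $\mu_1,\dots,\mu_d$ such that $h=u\diag(\mu_1,\dots,\mu_d)u^{-1}$. Set $a=\diag(\mu_1^{1/2},\dots,\mu_d^{1/2})$: the relation $\nrd(a)^2=\nrd(h)=1$ together with positivity forces $a\in A$ in each of the three cases. After multiplying $u$ on the right by a suitable element of the centraliser of $a$ in $U_d(D)$, one may arrange $\nrd(u)=1$, so $u=:k_1\in K$. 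Then $k_2:=a^{-1}k_1^{-1}g$ satisfies $k_2\transp{\bar{k_2}}=a^{-1}k_1^{-1}h k_1 a^{-1}=\Id$ and $\nrd(k_2)=1$, hence $k_2\in K$ and $g=k_1ak_2$.

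For the uniqueness of the multiset $\{a_i\}$, the identities $\transp{\bar{k_i}}=k_i^{-1}$ and $\transp{\bar a}=a$ imply $g\transp{\bar g}=k_1a^2k_1^{-1}$ for any such decomposition. Hence the eigenvalues of $a^2$ coincide with those of $g\transp{\bar g}$, an intrinsic invariant of $g$; since the $a_i$ are real and $a$ is positive-diagonal, the multiset $\{a_i\}$ is determined.

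For the generic uniqueness, fix an ordering so that $a$ is uniquely determined, and suppose $g=k_1ak_2=k_1'ak_2'$. Then $u:=k_1^{-1}k_1'\in K$ satisfies $ua^2u^{-1}=a^2$ by the previous identity; distinctness of the $a_i$ makes the spectrum of $a^2$ simple, so $u$ preserves each eigenspace of $a^2$ and is therefore diagonal, hence commutes with $a$. Combined with $u\in K$, this gives $u\in\centre_K(A)$. Substituting back into $k_1ak_2=k_1uak_2'=k_1auk_2'$ and cancelling yields $k_2=uk_2'$, so that $z:=u^{-1}\in\centre_K(A)$ produces $(k_1',k_2')=(k_1z^{-1},zk_2)$ as required. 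The hard part in turning this sketch into a full proof is a careful treatment of the quaternionic spectral theorem and of the behaviour of $\nrd$ under the involution $x\mapsto\transp{\bar x}$; both can be dealt with uniformly by descending along the complex splitting mentioned above.
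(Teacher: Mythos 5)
The paper does not prove this theorem; it invokes the Cartan decomposition as a known fact, citing Knapp's treatment for general semisimple Lie groups, which proceeds through the abstract machinery of Cartan involutions, the decomposition~$\lieg = \liek \oplus \p$, maximal abelian subspaces of~$\p$ and the global $KAK$ theorem. Your argument is a genuinely different and more elementary route, specific to these classical matrix groups: you deduce the decomposition from the polar decomposition of~$g$ via~$h = g\transp{\bar g}$ together with the spectral theorem for positive Hermitian matrices over~$D\in\{\R,\C,\Hamil\}$, and you get the uniqueness statements from the intrinsic spectrum of~$g\transp{\bar g}$ and the fact that a $D$-linear map commuting with a diagonal matrix with distinct real entries is itself diagonal, hence in~$\centre_K(A)$. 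The steps are correct; the only places that would need spelling out in a full write-up are ones you already flag: the quaternionic spectral theorem, the behaviour of~$\nrd$ under~$x\mapsto\transp{\bar x}$ (which also hides the fact that~$\U_d(\Hamil)=\SU_d(\Hamil)$, while for~$D=\R,\C$ one must really check the reduced norm can be corrected by a centraliser element, e.g.\ using~$\U(1)^d$ or~$\{\pm1\}^d\subset\centre_{U_d(D)}(a)$), and the (easy) observation that the~$\mu_i$ are strictly positive because~$h$ is positive definite. What your approach buys is a concrete, computationally transparent proof tailored to~$\SL_d(D)$ that avoids the general Lie-theoretic apparatus; what the Knapp reference buys is generality and immediate compatibility with the restricted-root framework the paper uses in Proposition~\ref{prop:integralKAK}.
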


\begin{rmk}
  When~$D=\R$ or~$\C$, this is also known as the singular value decomposition.
  Note that~$G\setminus S$ has zero measure, so that we can and will restrict
  to~$S$ when computing integrals.
\end{rmk}

Define the function~$\rho\colon G \to \R_{\ge 0}$ such that for all~$g\in G$
with Cartan decomposition~$g = k_1 a k_2$, where~$a =
\diag(\exp(a_1),\dots,\exp(a_d))$, we have
\[
  \rho(g) = \max_i |a_i|.
\]
This is well-defined since the~$a_i$ are unique up to permutation.
Note that we have~$\rho(g^{-1}) = \rho(g)$ for all~$g\in G$.

Let us give a series of corollaries of Theorem~\ref{thm:cartan} that will be
useful to estimate the minimal Hamming distance of the multiplicative codes. 

Let~$\|\cdot\|_2 \colon D^d \to \R$ be the norm on~$D^d$ defined by~$\|x\|_2 = \left(\sum_{i=1}^d x_i\bar{x}_i
\right)^{1/2}$ for all~$x\in D^d$, and let~$\vertiii{\cdot}\colon \mat_d(D) \to
\R_{\ge 0}$ be the corresponding operator norm, that is:

$$\vertiii{g}=\sup_{x\neq 0}
\displaystyle{\frac{\|g\cdot x\|_2}{\|x\|_2}} \text{ for all }g \in
\mat_d(D).$$

\begin{cor}\label{cor:opnorm}
  For all~$g\in G$, we have
  \[
    \rho(g) = \log\max(\vertiii{g}, \vertiii{g^{-1}}).
  \]
\end{cor}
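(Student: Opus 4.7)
The plan is to reduce the computation to the case of a diagonal element by exploiting that $K$ acts by isometries for $\|\cdot\|_2$, and then compute the operator norm of a diagonal matrix directly.

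First I would verify the $K$-invariance: by the very definition $K = \{k \in G : k\transp{\bar{k}} = \Id\}$, the group $K$ is the group of isometries of the hermitian form $\langle x, y\rangle = \sum_i x_i \bar{y}_i$ on $D^d$ whose associated norm is exactly $\|\cdot\|_2$. Hence $\|kx\|_2 = \|x\|_2$ for all $k\in K$ and $x\in D^d$, which immediately implies $\vertiii{kg k'} = \vertiii{g}$ for all $k,k'\in K$ and $g\in\mat_d(D)$.

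Next, write $g = k_1 a k_2$ using the Cartan decomposition (Theorem~\ref{thm:cartan}) with $a = \diag(\exp(a_1),\dots,\exp(a_d))$ and $\sum a_i = 0$. By the $K$-invariance above, $\vertiii{g} = \vertiii{a}$ and $\vertiii{g^{-1}} = \vertiii{a^{-1}}$ (since $g^{-1} = k_2^{-1}a^{-1}k_1^{-1}$). For diagonal $a$, a direct computation gives
\[
  \|a x\|_2^2 = \sum_{i=1}^d \exp(2a_i)\, x_i \bar{x}_i \le \exp(2\max_i a_i)\, \|x\|_2^2,
\]
with equality approached by concentrating $x$ on a coordinate attaining $\max_i a_i$; hence $\vertiii{a} = \exp(\max_i a_i)$, and analogously $\vertiii{a^{-1}} = \exp(-\min_i a_i)$.

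Finally I would combine the two computations. Since $\sum_i a_i = 0$, we have $\max_i a_i \ge 0$ and $-\min_i a_i \ge 0$, so
\[
  \max(\vertiii{g}, \vertiii{g^{-1}}) = \exp\bigl(\max(\max_i a_i,\, -\min_i a_i)\bigr) = \exp\bigl(\max_i |a_i|\bigr) = \exp(\rho(g)),
\]
and taking $\log$ yields the statement. There is no real obstacle; the only subtlety is keeping track of the (non-commutative) conventions when checking that the $\|\cdot\|_2$-norm is $K$-invariant and that the operator norm of the diagonal element is as expected, which work uniformly over $D\in\{\R,\C,\Hamil\}$ because $x\bar{x} = \bar{x}x \in \R_{\ge 0}$ for every $x \in D$.
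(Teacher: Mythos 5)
Your proof is correct and follows essentially the same route as the paper: use bi-invariance of $\|\cdot\|_2$ under $K$ to reduce to the diagonal factor in the Cartan decomposition, compute the operator norm of a diagonal element, and apply this to both $g$ and $g^{-1}$. You simply fill in the details that the paper leaves implicit.
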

\begin{proof}
 As $\| \cdot \|_2$ is bi-invariant by $K$, in Cartan decomposition (Theorem~\ref{thm:cartan}) we have~$\vertiii{g} = \max_i
  \exp(a_i)$. Applying this to~$g$ and~$g^{-1}$ gives the result.
\end{proof}

\medskip

\begin{cor}\label{cor:subadd}
  For all~$g,h\in G$, we have~$\rho(gh)\le \rho(g)+\rho(h)$.
\end{cor}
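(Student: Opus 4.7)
The plan is to deduce the subadditivity directly from Corollary~\ref{cor:opnorm}, which identifies $\rho$ with the logarithm of a symmetrised operator norm. The point is that $\vertiii{\cdot}$ is submultiplicative, and the ``symmetrisation'' $\max(\vertiii{\cdot},\vertiii{\cdot^{-1}})$ retains this property.

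More concretely, I would proceed as follows. First, I would invoke submultiplicativity of the operator norm, which is immediate from its definition as a supremum of quotients of $\|\cdot\|_2$: for all $g,h\in G$,
\[
  \vertiii{gh}\le \vertiii{g}\cdot\vertiii{h},
  \qquad
  \vertiii{(gh)^{-1}} = \vertiii{h^{-1}g^{-1}} \le \vertiii{h^{-1}}\cdot\vertiii{g^{-1}}.
\]
Next, I would bound both quantities by the product of the symmetrised norms:
\[
  \max\bigl(\vertiii{gh},\vertiii{(gh)^{-1}}\bigr)
  \le \max\bigl(\vertiii{g},\vertiii{g^{-1}}\bigr)\cdot
  \max\bigl(\vertiii{h},\vertiii{h^{-1}}\bigr).
\]
Taking logarithms and applying Corollary~\ref{cor:opnorm} to $g$, $h$ and $gh$ then yields $\rho(gh)\le \rho(g)+\rho(h)$.

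There is essentially no obstacle here: the whole work has been done in establishing Corollary~\ref{cor:opnorm}, which translates the Cartan-theoretic quantity $\rho$ into a norm-theoretic quantity for which subadditivity is a one-line consequence of submultiplicativity. The only thing worth noting is that $\|\cdot\|_2$ is an honest norm on $D^d$ for $D\in\{\R,\C,\Hamil\}$ because the canonical involution satisfies $x\bar x\in\R_{\ge 0}$, so that $\vertiii{\cdot}$ is well-defined and submultiplicative in the usual way.
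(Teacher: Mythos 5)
Your proof is correct and is exactly the argument the paper gives: invoke Corollary~\ref{cor:opnorm} to express $\rho$ via the symmetrised operator norm, then use submultiplicativity of $\vertiii{\cdot}$. You have simply spelled out the one-line justification in more detail.
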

\begin{proof}
  This follows from Corollary~\ref{cor:opnorm} and the submultiplicativity of operator norms.
\end{proof}

\begin{cor}\label{cor:nrdbound}
  For all~$g\in G$, we have
  \[
    |\nrd(g-1)| \le 2^d\exp(d\rho(g)).
  \]
\end{cor}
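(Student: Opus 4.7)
The natural approach is to use the Cartan decomposition (Theorem~\ref{thm:cartan}) to reduce the estimate to a computation involving only the diagonal part, modulo compact pieces that are already controlled by the operator-norm framework of Corollary~\ref{cor:opnorm}.

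First I would write $g = k_1 a k_2$ with $k_1,k_2\in K$ and $a = \diag(\exp(a_1),\ldots,\exp(a_d))$, so that $\max_i|a_i|=\rho(g)$. Setting $k=k_1^{-1}k_2^{-1}\in K$, the elementary identity
\[
g - 1 = k_1\bigl(a - k\bigr)k_2
\]
combined with the multiplicativity of~$\nrd$ and the fact that $|\nrd(k)|=1$ for every $k\in K$ (since $kk^\ast = 1$ implies $|\nrd(k)|^2 = \nrd(kk^\ast)=1$) reduces the problem to bounding $|\nrd(a-k)|$. This is the crucial step: it eliminates the unitary factors and leaves a quantity whose operator norm is easy to estimate.

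Next I would control the operator norm of $a-k$. The triangle inequality together with $\vertiii{k}=1$ (because $K$ preserves $\|\cdot\|_2$) and $\vertiii{a} = \exp(\max_i a_i) \le \exp(\rho(g))$ yields
\[
\vertiii{a - k} \le \vertiii{a} + \vertiii{k} \le \exp(\rho(g)) + 1 \le 2\exp(\rho(g)),
\]
where the final inequality uses $\rho(g)\ge 0$ hence $\exp(\rho(g))\ge 1$. The last ingredient is the classical bound $|\nrd(x)| \le \vertiii{x}^d$ for $x\in\mat_d(D)$, which follows from writing $x$ in its singular value (i.e., polar) decomposition: both sides can be read off of the diagonal part, with $\vertiii{x}$ being the largest singular value and $|\nrd(x)|$ the product of all~$d$ of them.

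Combining the three ingredients gives
\[
|\nrd(g-1)| = |\nrd(a-k)| \le \vertiii{a-k}^d \le \bigl(2\exp(\rho(g))\bigr)^d = 2^d \exp(d\rho(g)).
\]
The argument is essentially a two-line computation once the Cartan decomposition is in place; the only point that requires actual thought, rather than formal manipulation, is the inequality $|\nrd(x)|\le\vertiii{x}^d$ in the quaternionic case, where one must be careful about the normalisation of~$\nrd$ relative to the determinant in~$\mat_{2d}(\C)$ under the splitting $\mat_d(\Hamil)\otimes_\R\C\cong\mat_{2d}(\C)$. I expect this normalisation bookkeeping to be the only real obstacle.
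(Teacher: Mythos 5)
Your proof follows the same structure as the paper's: both hinge on the key inequality $|\nrd(x)|\le\vertiii{x}^d$ together with a triangle-inequality bound $\vertiii{g-1}\le 2\exp(\rho(g))$. Your step $g-1=k_1(a-k)k_2$, using the $K$-invariance of $\nrd$ to strip off the unitary factors, is an extra detour the paper does not take; the paper simply bounds $\vertiii{g-1}\le\vertiii{g}+1\le 2\exp(\rho(g))$ directly via Corollary~\ref{cor:opnorm}. That detour is harmless.

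The normalisation concern you flagged at the end, however, is not mere bookkeeping: the inequality $|\nrd(x)|\le\vertiii{x}^d$ is in fact \emph{false} for $D=\Hamil$, so your proof (and, as written, the paper's) has a gap there. The algebra $\mat_d(\Hamil)$ has reduced degree $de=2d$ over $\R$, so $x=\lambda\Id_d$ with $\lambda>1$ has $\nrd(x)=\lambda^{2d}$ while $\vertiii{x}=\lambda$; the sharp exponent is therefore $de$, not $d$. Carrying out the volume argument sketched in the paper's proof, the $\R$-determinant of $x$ acting on $V=D^d$ is $|\nrd(x)|^{ne}$ (not $|\nrd(x)|^{ne^2}$), and comparing with $\dim_\R V=de^2n$ gives $|\nrd(x)|\le\vertiii{x}^{de}$. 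The corollary should therefore read $|\nrd(g-1)|\le 2^{de}\exp(de\,\rho(g))$; that corrected form is what Proposition~\ref{prop:distmin} actually uses, since the local product $d_\sigma e_\sigma$ at each infinite place equals the global degree. So: finish the quaternionic bookkeeping you flagged, and the exponent will change from $d$ to $de$.
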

\begin{proof}
  Let~$x=g-1$.
  We claim that we have~$|\nrd(x)| \le \vertiii{x}^d$. To see this, let~$V=D^d$
  viewed as an~$\R$-vector space of dimension~$de^2n$.
  The absolute value of the determinant of~$x$ viewed as an endomorphism of~$V$
  is~$|\nrd(x)|^{ne^2}$.
  So multiplication by~$x$ scales volumes by~$|\nrd(x)|^{ne^2}$; applying this
  to a ball for the norm~$\|\cdot\|_2$, noting that the volume
  of the ball of radius~$R$ is proportional to~$R^{de^2n}$ and using the definition of
  the operator norm proves the claim.
  
  On the other hand we have
  \[
    \vertiii{x} \le \vertiii{g} +1 \le 2\max(1,\vertiii{g}) \le 2\exp(\rho(g)),
  \]
  where the last inequality follows from Corollary~\ref{cor:opnorm}.
\end{proof}

\subsection{Haar measure on $\SL_d(D)$} \label{section:Haar-measure}
If~$M,N\in\mat_d(D)$, we write~$[M,N]$ for the Lie bracket~$MN-NM$, and
if~$g\in\GL_d(D)$, we write~$\Ad(g)M = gMg^{-1}$.
The Lie algebra of~$G = \SL_d(D)$ is~$\lieg = \liesl_d(D) = \{X\in
\mat_d(D) \mid \trd(X)=0\}$. We equip the~$\R$-vector space~$\liesl_d(D)$ with
the positive definite inner product~$(X,Y)\mapsto \tr_{\centre(D)/\R}\trd(\transp{\overline{X}}Y)$, with
corresponding norm~$\|X\|^2 = n\trd(\transp{\overline{X}}X)$.
This gives~$G=\SL_d(D)$ the
structure of a Riemannian manifold with a metric on~$G$ that is invariant under
left translations by arbitrary elements of~$G$ and under right translation by
elements of~$K$. In particular, this defines a volume form $d\mu$  on~$G$,
on~$K$ and on~$\centre_K(A)$.

Let us start with the computation of the volume of $\centre_K(A)$ with respect
to~$d\mu$.

 \begin{lem} \label{lemm:volcentre} Let~$G = \SL_d(D)$ and~$K,A$ be as above. Then $\mu(\centre_K(A))$ equals
\begin{enumerate}[\quad $\bullet$]
\item $2^{d-1}$ when $D=\R$,
\item $(2\sqrt{2}\pi)^{d-1}\sqrt{d}$  when $D=\C$,
\item $(4\sqrt{2}\pi^2)^d$ when $D=\Hamil$.
\end{enumerate}
\end{lem}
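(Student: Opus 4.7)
The plan is to treat the three cases separately by parametrising $\centre_K(A)$ via the exponential map on its Lie subalgebra of $\liesl_d(D)$, reading off the restricted inner product, and computing the volume as the covolume of the kernel lattice of $\exp$.

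For $D=\R$, the group $\centre_K(A)$ is discrete of cardinality $2^{d-1}$, and the Riemannian volume of a zero-dimensional manifold is the counting measure, giving $\mu(\centre_K(A)) = 2^{d-1}$ immediately.

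For $D=\C$ (so $n=2$), the Lie algebra of $\centre_K(A)$ is the space of matrices $\diag(is_1,\dots,is_d)$ with $s_j\in\R$ and $\sum_j s_j=0$, and a direct computation gives $\|X\|^2 = 2\sum_j s_j^2$, i.e.\ twice the Euclidean metric on the hyperplane $H = \{s\in\R^d : \sum_j s_j = 0\}$. The exponential map identifies $\centre_K(A)$ with $H/\Lambda$ for $\Lambda = (2\pi\Z)^d \cap H$. The Euclidean covolume of $\Lambda$ in $H$ is $(2\pi)^{d-1}\sqrt{d}$, the $\sqrt{d}$ coming from the standard covolume of $\Z^d \cap H$ computed via the basis $e_i - e_d$ whose Gram matrix has determinant $d$. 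Multiplying by $2^{(d-1)/2}$ to account for the metric rescaling yields $(2\sqrt{2}\pi)^{d-1}\sqrt{d}$.

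For $D=\Hamil$ (so $n=1$), the centraliser $\centre_K(A) = \SL_1(\Hamil)^d$ is a product of $d$ copies of the unit sphere $S^3 \subset \Hamil$, whose tangent space at the identity is $\R i + \R j + \R k$. A direct computation using $\trd(r) = 2r$ for $r \in \R \subset \Hamil$ shows that the restricted inner product on this tangent space is twice the standard Euclidean one. Since $S^3$ with the Euclidean metric has volume $2\pi^2$, and a $3$-dimensional volume scales by $2^{3/2}$ under a factor-$2$ rescaling of the metric, each factor contributes $2\pi^2\cdot 2\sqrt{2} = 4\sqrt{2}\pi^2$, giving $(4\sqrt{2}\pi^2)^d$ in total.

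The only bookkeeping subtlety, and the place where the dependence on $D$ really enters, is the normalisation factor $n = [\centre(D):\R]$ in the inner product, which contributes a dimension-dependent power of $\sqrt{n}$ to each volume; once this is handled correctly, the complex case reduces to the standard covolume computation for $(2\pi\Z)^d \cap H$, and the quaternionic case is essentially automatic once $\SL_1(\Hamil)$ is identified with $S^3 \subset \R^4$.
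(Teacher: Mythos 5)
Your proof is correct and takes essentially the same approach as the paper: the paper likewise treats the three cases separately, computing the Gram matrix of tangent vectors to a parametrisation of $\U(1)^{d-1}$ in the complex case (equivalent to your covolume-of-$\exp$-kernel computation) and identifying $\SL_1(\Hamil)$ with a three-sphere of radius $\sqrt{2}$ in the quaternionic case. One minor inaccuracy in your closing summary: for $D=\Hamil$ the metric rescaling by $2$ comes from $\trd(r)=2r$ on $\R\subset\Hamil$ (since $e=2$), not from $n=[\centre(D):\R]$, which equals $1$ there; your computation in the body of the proof attributes it correctly, so this does not affect the argument.
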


\begin{proof}~
\begin{enumerate}[\quad $\bullet$]
  \item If~$D=\R$, the group~$\centre_K(A) \cong \{\pm 1\}^{d-1}$ is finite and the corresponding
    measure is the counting measure.
  \item If~$D=\C$, the group~$\centre_K(A)$ is a product of
    circles~$U(1)^{d-1}$, which we parametrise as the image of~$[0,2\pi]^{d-1}$
    under the map
    \[
      (\theta_1,\dots,\theta_{d-1}) \mapsto \diag\left(\exp(i\theta_1),
      \dots, \exp(i\theta_{d-1}), \exp\Bigl(-i\sum_{k=1}^{d-1}\theta_k\Bigr)\right).
    \]
    The Gram matrix of the corresponding tangent vectors is
    the~$(d-1)\times(d-1)$ matrix
    \[
      \begin{pmatrix}
        4      & 2      & \dots  & 2      \\
        2      & 4      & \ddots & \vdots \\
        \vdots & \ddots & \ddots & 2      \\
        2      & \dots  &      2 & 4
      \end{pmatrix},
    \]
    which has determinant~$2^{d-1}d$. So the corresponding volume
    is~$(2\pi)^{d-1}\sqrt{2^{d-1}d}$.
  \item If~$D=\Hamil$, the group~$\centre_K(A)$, as a Riemannian manifold, is a
    direct product of $d$ copies of~$\SL_1(\Hamil) = \{x+yi+zj+tk\mid
    x^2+y^2+z^2+t^2 = 1\}$, which is a $3$-dimensional sphere
    of radius~$\sqrt{2}$ with our normalisation. It is well-known that the
    volume of a $3$-dimensional sphere of radius~$R$ is~$2\pi^2 R^3$, giving the
    formula.
\end{enumerate}
\end{proof}

More generally, we can compute volumes using the following formula:
\begin{propo}\label{prop:integralKAK}
  Let~$G = \SL_d(D)$ and~$K,A$ be as above, and let~$f\in L^1(G)$. Then the
  integral~$\displaystyle{\int_G f \ d\mu}$ equals
  \[
    \frac{(ne)^{\frac{d-1}{2}}\sqrt{d}}{\mu(\centre_K(A))}\int_{K\times K}
    \int_{a_i}
    \prod_{1\le i < j \le d}\sinh(a_i-a_{j})^{ne^2} f(k_1
    a
    k_2)da_idk_1dk_2,
  \]
  where:
  \begin{enumerate}[\quad $\bullet$]
    \item $a = \diag(\exp(a_1),\dots,\exp(a_d))$, and
    \item the inner integral is over the set
      \[
        \left\{(a_1,\dots,a_{d-1})\in\R^{d-1} \mid a_1 > a_2 > \dots > a_{d-1} >
        -\sum_{i=1}^{d-1}a_i \right\}
      \]
      and~$a_d = -\sum_{i=1}^{d-1}a_i$.
  \end{enumerate}
\end{propo}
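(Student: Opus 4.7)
The plan is to specialize the classical $KAK$ integration formula for semisimple Lie groups (see for instance~\cite[Proposition~5.28]{Knapp}) to our setting $G = \SL_d(D)$, and to carefully track the two normalization constants that appear: the Riemannian volume form on $\liea$ coming from our chosen inner product, and the factor $1/\mu(\centre_K(A))$ arising from the ambiguity in the $KAK$ decomposition identified in Theorem~\ref{thm:cartan}.

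The first step is to identify the restricted root system of $(\lieg,\liea)$. The roots are $\alpha_{ij}\colon H\mapsto a_i-a_j$ for $i\ne j$, where $H = \diag(a_1,\dots,a_d)\in\liea$, and each root space $\lieg_{\alpha_{ij}}$ consists of matrices supported on the single entry $(i,j)$ with coefficient in $D$, hence has real dimension $\dim_\R D = ne^2$. The Cartan involution $\theta\colon X\mapsto -\transp{\overline{X}}$ exchanges $\lieg_\alpha$ with $\lieg_{-\alpha}$ and induces the $\theta$-stable decomposition $\liek = \centre_\liek(\liea)\oplus\bigoplus_{\alpha>0}\liek_\alpha$, with each $\liek_\alpha$ of real dimension $ne^2$.

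Next, I compute the Jacobian of $\Phi\colon(k_1,H,k_2)\mapsto k_1\exp(H)k_2$ at a regular point $(e,H,e)$, after left-translating tangent vectors back to the identity. The differential of $\Phi$ maps $(X_1,\dot H,X_2)$ to $\Ad(\exp(-H))X_1+\dot H + X_2\in\lieg$, whose kernel is the antidiagonal copy of $\centre_\liek(\liea)\subset\liek\oplus\{0\}\oplus\liek$, reflecting the ambiguity of Theorem~\ref{thm:cartan}. Quotienting by $\centre_K(A)$ on one side removes this redundancy, and since the integrand is invariant under the diagonal action $(k_1,k_2)\mapsto (k_1z^{-1},zk_2)$ on $K\times K$, converting back to an integral over $K\times K$ produces the factor $1/\mu(\centre_K(A))$. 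For $X = Y + \theta Y \in \liek_\alpha$ with $Y \in \lieg_\alpha$, the identity
\[
\Ad(\exp(-H))(Y+\theta Y) = \cosh(\alpha(H))(Y+\theta Y) - \sinh(\alpha(H))(Y - \theta Y)
\]
shows that the differential block-diagonalizes across positive roots, and a $2\times 2$ block determinant computation in orthonormal bases of $\liek_\alpha$ and of its $\theta$-partner yields a factor $\sinh(\alpha(H))^{ne^2}$ per positive root, giving total Jacobian $\prod_{i<j}\sinh(a_i-a_j)^{ne^2}$ on the regular set.

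The last step is to compute the Riemannian volume form on $\liea$ in the coordinates $(a_1,\dots,a_{d-1})$ with $a_d = -\sum_{i<d}a_i$. The basis $\{E_{ii}-E_{dd}\}_{1\le i<d}$ has Gram matrix $ne\,(I_{d-1}+J_{d-1})$ under the inner product $\tr_{\centre(D)/\R}\trd(\transp{\overline{X}}Y)$, where $J_{d-1}$ is the all-ones matrix; since $\det(I_{d-1}+J_{d-1}) = d$, the volume form is $(ne)^{\frac{d-1}{2}}\sqrt{d}\,da_1\cdots da_{d-1}$. The main obstacle is bookkeeping the normalizations: verifying that the root-space contribution is exactly $\sinh(\alpha(H))^{ne^2}$ with no stray factor of~$2$, and that our specific inner product produces exactly the constant $(ne)^{\frac{d-1}{2}}\sqrt{d}$. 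Assembling these three ingredients yields the proposition.
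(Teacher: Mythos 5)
Your proposal follows essentially the same route as the paper: compute the Jacobian of the map $(k_1,a,k_2)\mapsto k_1ak_2$ via the restricted-root decomposition of $\lieg$ (the paper uses the explicit matrices $F_\pm$, you phrase it via $Y\pm\theta Y$, which is the same thing), account for the $\centre_K(A)$ fibre to get the prefactor $1/\mu(\centre_K(A))$, and compute the Gram determinant $(ne)^{d-1}d$ for the parametrisation of $\liea$. The one place where the paper is more careful is in tracking the extra factor $2^{(e^2-1)d/2}$ that shows up both in its non-orthonormal block Jacobian and in its fibre-volume computation and then cancels; you sidestep this by working in orthonormal bases, which is legitimate provided you also verify that the fibre $\{(k_1z,a,z^{-1}k_2)\}$ then has volume exactly $\mu(\centre_K(A))$ in the product metric (there is a latent $\sqrt{2}$ per centraliser dimension on both sides), so the bookkeeping is equivalent.
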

\begin{proof}
  We will obtain the formula by pulling back the metric along the map
  \[
    \Psi \colon K\times A\times K \to G,
  \]
  and computing the pullback by using the decomposition of~$\liesl_d(D)$
  according to restricted roots of~$A$, as in \cite{Hu-Yan}, keeping track of
  all constants. We write~$\liea$ (resp.~$\liek$) for the Lie algebra of~$A$
  (resp. of~$K$).
  
  First we compute the differential of~$\Psi$. Let~$x = (k_1,a,k_2)\in K\times
  A\times K$, where~$a = \diag(\exp(a_i))$. Using the canonical isomorphism
  between the tangent space of a Lie group at an arbitrary point and its Lie
  algebra, we obtain that the differential of~$\Psi$ at~$(k_1,a,k_2)$ is the
  map~$d\Psi_x \colon \liek \times \liea \times \liek \to \lieg$ that sends
  \[
    (X_1,Y,X_2) \mapsto \Ad(k_2^{-1})\Ad(a^{-1})X_1 + Y + X_2.
  \]
  To compute the pull-back of the volume form to~$K\times A\times K$, it is
  enough to compute~$\Delta_x = \det \bigl( d\Psi_x \transp{(d\Psi_x)} \bigr)^{1/2}$.
  We compute this determinant by blocks. The term in~$\liea$ does not
  contribute. We compute the other terms by decomposing~$\mat_d(D)$ in $2\times
  2$ blocks corresponding to rows and columns~$i$ and~$j$.
  Consider the matrices
  \[
    F_+ = \begin{pmatrix}0 & 1\\1 & 0\end{pmatrix} \text{ and } F_- =
      \begin{pmatrix}0 & 1\\-1 & 0\end{pmatrix}\in \mat_2(D).
  \]
  We have~$\Ad(a)F_+ = \cosh(a_i-a_j)F_+ + \sinh(a_i-a_j)F_-$ and~$\Ad(a)F_- =
  \cosh(a_i-a_j)F_- + \sinh(a_i-a_j)F_+$. In addition, let~$D^0 = \{z\in D\mid
  \bar{z}+z=0\}$. Then~$\liek$ is the sum over all the blocks of~$F_-\R\oplus
  F_+D^0$, and of~$Z_\liek(\liea)$, and those are orthogonal to~$F_+\R$ and~$F_-D^0$.
  Using this, we see that the matrix of~$d\Psi_x$ is a direct sum of blocks of
  the form
  \[
    \begin{pmatrix}
      \sinh(a_i-a_j) & 0\\
      \cosh(a_i-a_j) & 1
    \end{pmatrix}
  \]
  each appearing with multiplicity~$\dim_\R D = ne^2$, and of
  the block~$(1\ 1)$ appearing with multiplicity~$(e^2-1)d$.
  We therefore have
  \[
    \Delta_x = 2^{\frac{(e^2-1)d}{2}}\prod_{1\le i<j\le d}\sinh(a_i-a_j)^{ne^2}.
  \]
  The map~$\Psi$ is not injective, and by Theorem~\ref{thm:cartan}, outside of a
  set of zero measure, the fibre above~$k_1ak_2$ is~$\{(k_1z,a,z^{-1}k_2) :
  z\in Z_K(A)\}$. With the same computation we obtain that the volume of this
  fibre is~$2^{\frac{(e^2-1)d}{2}}\mu(Z_K(A))$. This gives
  \[
    \int_G f \ d\mu = 
    \frac{1}{\mu(\centre_K(A))}\int_{K\times K}
    \int_{a\in A^+}
    \prod_{1\le i < j \le d}\sinh(a_i-a_{j})^{ne^2} f\left(k_1 a k_2\right)dadk_1dk_2,
  \]
  where~$A^+\subset A$ is the subset of diagonal matrices with decreasing entries.
  We parametrise
  \[
    A^+ = \left\{\diag(\exp(a_i)) : (a_1,\dots,a_{d-1})\in\R^{d-1} \mid a_1 > a_2 > \dots > a_{d}
         \text{ and } a_d = -\sum_{i=1}^{d-1}a_i\right\},
  \]
  and the factor corresponding to this second change of variables is the
  square root of the determinant of the Gram matrix of the
  tangent vectors~$\diag(0,\dots,1,0,\dots,0,-1)\in\lieg$. This Gram matrix is
  the~$(d-1)\times(d-1)$ matrix
  \[
    ne
    \begin{pmatrix}
      2      & 1      & \dots  & 1      \\
      1      & 2      & \ddots & \vdots \\
      \vdots & \ddots & \ddots & 1      \\
      1      & \dots  &      1 & 2
    \end{pmatrix},
  \]
  and it has determinant~$(ne)^{d-1}d$.
\end{proof}


\begin{propo}\label{prop:volK}
  Let~$K$ be as above. Then we have
  \[
    \mu(K) = \kappa \prod_{k=1}^r \frac{\pi^{m_k+1}}{m_k!},
  \]
  where
      \begin{enumerate}[\quad (i)]
    \item if~$D=\R$:
      \begin{enumerate}[$\bullet$]
        \item if~$d$ is even, then~$r=d/2$, $\kappa = 2^{d^2/2-d/4}$ and
      \[m_k = 2k-1\text{ for }k\le r-1 \text{ and }m_r = r-1;\]
        \item if~$d$ is odd, then~$r=(d-1)/2$, $\kappa = 2^{d^2/2+d/4-3/4}$ and
      \[m_k = 2k-1 \text{ for all }k\le r;\]
      \end{enumerate}
    \item\label{item:volsud} if~$D=\C$, then~$r=d-1$, $\kappa = 2^{d^2+d/2-3/2}\sqrt{d}$ and
      \[m_k = k \text{ for all }k\le r;\]
    \item if~$D=\Hamil$, then~$r=d$, $\kappa = 2^{2d^2+d/2}$ and
      \[m_k = 2k-1 \text{ for all }k\le r.\]
  \end{enumerate}
\end{propo}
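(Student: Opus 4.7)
The plan is to identify $K = \SU_d(D)$ with one of the standard compact classical groups---$\SO(d)$ when $D=\R$, $\SU(d)$ when $D=\C$, and the compact symplectic group $\mathrm{Sp}(d)$ when $D=\Hamil$---and to apply Macdonald's volume formula while tracking the effect of our specific normalisation of the bi-invariant metric. The integers $m_k$ appearing in the statement are exactly the Lie-algebra exponents of these groups: $1,2,\dots,d-1$ for $A_{d-1}=\liesu(d)$, the odd integers $1,3,\dots,2d-1$ for $C_d=\liesp(d)$, and the corresponding odd-integer patterns for the orthogonal groups (with the exceptional $m_r=r-1$ in type $D_r$). This already accounts for the product of $\pi^{m_k+1}/m_k!$ factors in the formula; what remains is to determine the overall constant $\kappa$.

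The cleanest way to compute $\kappa$ is an induction on $d$ via the principal sphere fibration
\[
  K_{d-1}\longrightarrow K_d\longrightarrow S,
\]
in which $K_d$ acts transitively on the unit sphere $S\subset D^d$ through its action on the first standard basis vector, with stabiliser $K_{d-1}$. Since $\dim_\R D = ne^2$, the sphere $S$ has real dimension $nde^2-1$. At each step I would parametrise the orthogonal complement of $\liek_{d-1}$ in $\liek_d$ explicitly as first-row/first-column matrices (with an extra scalar adjustment in the lower $(d-1)\times(d-1)$ block when $D=\C$ to enforce the trace-zero condition), compute the Gram determinant of the induced metric on $S$ relative to the standard Euclidean one, and apply the classical sphere formula $\vol(S^m_1)=2\pi^{(m+1)/2}/\Gamma((m+1)/2)$. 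Simplifying the resulting telescoping product of $\Gamma$-values via the duplication formula collapses them into factorials in exactly the claimed pattern.

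The main obstacle is bookkeeping the various normalisation constants. Three independent sources contribute to $\kappa$: the factor $n$ in our metric $\|X\|^2=n\trd(\transp{\bar X}X)$, the real dimensions $ne^2$ that appear in each sphere volume, and---in the complex case only---the trace-zero coupling mentioned above, which produces the factor $\sqrt{d}$ in $\kappa$ in exact parallel with the $\sqrt{d}$ appearing in Lemma~\ref{lemm:volcentre} when $D=\C$. The parity split in the real case emerges naturally when one rewrites the telescoping product of $\Gamma$-values as factorials: for $d$ odd every exponent remains odd, whereas for $d$ even the final step produces the exceptional $m_r=r-1$. I would treat the three cases of $D$ separately, use Lemma~\ref{lemm:volcentre} as a base-case sanity check (the group is trivial for $D=\R,\C$ and a $3$-sphere of radius $\sqrt{2}$ for $D=\Hamil$), and collect the remaining constants into the closed-form values of $\kappa$.
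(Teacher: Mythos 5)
Your proposal takes a genuinely different route from the paper's proof. The paper applies Macdonald's formula directly: it exhibits explicit root vectors and coroots for $\liek$ with respect to a chosen torus $\liet$, computes the lattice volume $\mu(\liet/\liet_\Z)$ and the coroot norms $\|\alpha^\vee\|$ in the metric at hand, and reads off $\kappa$ from $\lambda = \mu(\liet/\liet_\Z)\prod_\alpha\|\alpha^\vee\|$. You instead propose to compute $\vol(K)$ inductively via the Riemannian submersion $K_{d-1}\to K_d\to S$ onto the unit sphere in $D^d$, pushing all the normalisation data into Gram determinants of the tangent map $\mathfrak{m}\to T_{e_1}S$ and then telescoping the resulting $\Gamma$-values. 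Both are legitimate; the paper's route avoids induction and isolates the metric dependence in a single formula, while yours is more geometric and elementary but shifts the burden to careful bookkeeping at each inductive step.

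A few cautions if you carry this out. First, you will not need Macdonald's formula at all: the exponents $m_k$ emerge automatically from the telescoping sphere dimensions ($\vol(S^{2k-1}) = 2\pi^k/(k-1)!$ after the duplication formula), so invoking Macdonald in your first paragraph only to discard it in the second is a wasted move. Second, the quotient metric on $S = K_d/K_{d-1}$ induced from the bi-invariant metric on $K_d$ is \emph{not} the round metric when $D=\C$ (it is a Berger sphere, squashed in the Hopf-fibre direction by the trace-zero constraint you flag), so the Gram determinant is not a single scalar but has one eigenvalue differing from the rest; this is exactly where the $\sqrt{d}$ in $\kappa$ comes from after telescoping $\prod_{k=2}^{d}\sqrt{k/(k-1)}$ (or its reciprocal -- the direction matters and is worth pinning down carefully). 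Third, you should verify the base case against the paper's normalisation, e.g.\ that $\mu(\SU_1(\Hamil))$ agrees with the $4\sqrt{2}\pi^2$ of Lemma~\ref{lemm:volcentre}, since the factor $n$ in $\|X\|^2 = n\trd(\transp{\bar X}X)$ propagates multiplicatively through every sphere volume. As written this is a credible plan but not yet a proof; the constants $\kappa$ in the statement are sensitive to exactly the places you identify as delicate.
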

\begin{proof}
  We apply Macdonald's formula \cite{Macdonald}. 
  Recall (see \cite[Part II, \S 19]{Bump}) that the roots of~$\liek$  with respect to~$\liet$  are the nonzero
  morphisms~$\alpha\in \Hom_\R(\liet,\C)$ such that there exists a nonzero~$X_\alpha\in
  \liek\otimes\C$ such that~$[t,X_\alpha]=\alpha(t)X_\alpha$ for all~$t\in\liet$;
  for each root~$\alpha$, the attached coroot~$\alpha^\vee
  \in \liet\otimes\C$ is the unique element~$\alpha^\vee\in
  \C \cdot [X_\alpha,X_{-\alpha}]$
  such that~$[\alpha^\vee,X_\alpha] = 2X_\alpha$ and~$[\alpha^\vee,X_{-\alpha}]
  = -2X_{-\alpha}$.
  In each case, we give the Lie
  algebra~$\liek$ of~$K$, and in Macdonald's notations, $\liet\subset\liek$,
  $\liet_\Z\subset\liet$, and the list of roots~$\alpha$, their corresponding
  root vectors~$X_\alpha$ and coroots~$\alpha^\vee\in\liek\otimes \C$.
  The list of~$m_k$ is standard (see \cite[\S 1.5]{Prasad} or \cite[Chap. VIII, \S 13 (VI)]{Bourbaki}).
  
  \smallskip
  
    \begin{enumerate}[\quad (i)]

  \item When~$D=\R$, $K = \SO_d(\R)$ and~$\liek = \lieso_d(\R)$ is the Lie algebra of
    antisymmetric matrices (which all have trace~$0$), and we
    have~$\liek_\C\cong \lieso_d(\C)$. We choose~$\liet\subset\liek$ to be the space of
    matrices that are block-diagonal with~$2\times 2$ blocks of the
    form~$\begin{pmatrix}0 & \theta_k \\ -\theta_k & 0\end{pmatrix}$ for~$k = 1, \dots, r$.
    Then~$\liet_\Z\subset\liet$ is the lattice
    of elements with~$\theta_k\in\Z$ for all~$k$.
    We will also write~$\theta_k$ the corresponding linear form on~$\liet$.
    For~$k<\ell\le r$ and~$M\in \mat_2(\R)$, let~$R_{k,\ell}(M)$ be the block
    matrix~$\begin{pmatrix}0 & M \\ -\transp{M} & 0\end{pmatrix}$ embedded in
    the~$(2k-1,2k,2\ell-1,2\ell)$-th block of a~$d\times d$ matrix.
    For~$k\le r$ and~$v\in\R^2$ a column vector, let~$R_k(v)$ be the block
    matrix~$\begin{pmatrix}0 & v\\ -\transp{v} & 0\end{pmatrix}$, embedded in
    the~$(2k-1,2k,d)$-th block of a~$d\times d$ matrix.
    For~$k\le r$, let~$F_k\in\liet$ be
    the matrix with~$\theta_k=1$ and all other coefficients~$0$.
    Then the roots of~$\liek$ with respect to~$\liet$ are:
    \begin{enumerate}
      \item For~$1\le k < \ell\le r$, the~$\alpha = \pm (\theta_k-\theta_\ell)i$,
        with corresponding
        \[
          X_{\alpha} = R_{k,\ell}\!\begin{pmatrix}1 & 0\\0 & 1\end{pmatrix}
          \pm R_{k,\ell}\!\begin{pmatrix}0 & 1 \\ -1 & 0\end{pmatrix}\otimes i
          \in \liek\otimes\C
        \]
        and coroots~$\alpha^\vee = \pm(F_k-F_\ell)\otimes i\in \liet\otimes\C$.
      \item For~$1\le k < \ell\le r$, the~$\alpha = \pm (\theta_k+\theta_\ell)i$,
        with corresponding
        \[
          X_{\alpha} = R_{k,\ell}\!\begin{pmatrix}1 & 0\\0 & -1\end{pmatrix}
          \pm R_{k,\ell}\!\begin{pmatrix}0 & 1 \\ 1 & 0\end{pmatrix}\otimes i
        \]
        and coroots~$\alpha^\vee = \pm(F_k+F_\ell)\otimes i$.
      \item If~$d$ is odd, for~$1\le k \le r$, the~$\alpha = \pm \theta_k i$,
        with corresponding
        \[
          X_{\alpha} = R_{k}\!\begin{pmatrix}1 \\0 \end{pmatrix}
          \pm R_{k}\!\begin{pmatrix}0 \\ 1\end{pmatrix}\otimes i
        \]
        and coroots~$\alpha^\vee = \pm 2F_k\otimes i$.
    \end{enumerate}
   
  \item When~$D=\C$, $K = \SU_d(\C)$ and~$\liek = \liesu_d(\C)$ is the Lie algebra of
    anti-Hermitian matrices with trace~$0$, and we have~$\liek\otimes\C\cong
    \liesl_d(\C)$. We choose~$\liet\subset\liek$ to be
    the space of diagonal matrices with coefficients~$i\theta_k$
    where~$\theta_k\in\R$ and~$\sum_{k=1}^d\theta_k=0$.
    Then~$\liet_\Z\subset\liet$ is the lattice of elements with~$\theta_k\in\Z$
    for all~$k$.
    We will again write~$\theta_k$ the corresponding linear form on~$\liet$.
    For~$k<\ell\le r$ and~$z\in\C$, let~$R_{k,\ell}(z)$ be the
    matrix~$\begin{pmatrix}0 & z \\ -\bar{z} & 0\end{pmatrix}$, embedded in
    the~$(k,\ell)$-th block of a~$d\times d$ matrix.
    For~$k\le d$, let~$F_k$ be the matrix with~$\theta_k=1$ and all
    other coefficients~$0$.
    Then the roots of~$\liek$ are, for~$1\le k<\ell\le d$, the~$\alpha =
    \pm(\theta_k-\theta_\ell)i$, with corresponding
    \[
      X_{\alpha} = R_{k,\ell}(1) \pm R_{k,\ell}(i)\otimes i
    \]
    and coroots~$\alpha^\vee = \pm(F_k-F_\ell)\otimes i$.

  \item When~$D=\Hamil$, $K = \SU_d(\Hamil)$ and~$\liek = \liesu_d(\Hamil)$ is the Lie
    algebra of quaternion-anti-Hermitian matrices (which automatically have
    reduced trace~$0$), and we have~$\liek\otimes\C \cong \liesp_{2d}(\C)$. 
    We choose~$\liet\subset\liek$ to be the space of diagonal
    matrices with coefficients~$i\theta_k$ where~$\theta_k\in\R$.
    Then~$\liet_\Z\subset\liet$ is the lattice of elements with~$\theta_k\in\Z$
    for all~$k$.
    We will again write~$\theta_k$ the corresponding linear form on~$\liet$.
    For~$k<\ell\le r$ and~$w\in\Hamil$, let~$R_{k,\ell}(w)$ be the
    matrix~$\begin{pmatrix}0 & w \\ -\bar{w} & 0\end{pmatrix}$, embedded in
    the~$(k,\ell)$-th block of a~$d\times d$ matrix.
    For~$k\le r$, let~$F_k$ be the matrix with~$\theta_k=1$ and all
    other coefficients~$0$.
    Then the roots of~$\liek$ are
    \begin{enumerate}
      \item For~$1\le k<\ell\le d$, the~$\alpha = \pm(\theta_k-\theta_\ell)i$,
        with corresponding
        \[
          X_\alpha = R_{k,\ell}(1)\pm R_{k,\ell}(i)\otimes i
        \]
        and coroots~$\alpha^\vee = \pm(F_k-F_\ell)\otimes i$.
      \item For~$1\le k<\ell\le d$, the~$\alpha = \pm(\theta_k+\theta_\ell)i$,
        with corresponding
        \[
          X_\alpha = R_{k,\ell}(j)\pm R_{k,\ell}(ij)\otimes i
        \]
        and coroots~$\alpha^\vee = \pm(F_k+F_\ell)\otimes i$.
      \item For~$1\le k\le d$, the~$\alpha = \pm 2\theta_k i$,
        with corresponding
        \[
          X_\alpha = jF_k \pm ijF_k \otimes i
        \]
        and coroots~$\alpha^\vee = \pm F_k\otimes i$.
    \end{enumerate}
    
\end{enumerate}

  Computing~$\mu(\liet/\liet_\Z)$ and the norms of the coroots in each case
  gives the result from Macdonald's formula: in his notation we have
  \[
    \lambda = \mu(\liet/\liet_\Z)\prod_{\alpha} \|\alpha^\vee\|,
  \]
  and we finally let~$\kappa = 2^r\lambda$.
\end{proof}

\begin{cor} \label{cor:vol-compact}
  Let~$K$ be as above. We have
  \[
    \log\mu(K) = -\frac{n}{4}(ed)^2\log d + O(d^2).
  \]
\end{cor}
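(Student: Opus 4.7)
My plan is to take logarithms in the formula of Proposition~\ref{prop:volK}, which yields
$$\log \mu(K) = \log \kappa + \Bigl( r + \sum_{k=1}^r m_k \Bigr) \log \pi - \sum_{k=1}^r \log(m_k!),$$
and show that only the last term contributes to the leading asymptotic; all other pieces will turn out to be $O(d^2)$.

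First I would dispose of the easy terms. Direct inspection of the three cases shows that $\log \kappa = O(d^2)$: every exponent of $2$ appearing is quadratic in $d$ with linear lower-order terms, and the isolated $\sqrt{d}$ factor in case (ii) contributes only $O(\log d)$. Since $r \le d$ and $m_k \le 2d$ in all three cases, $\sum_{k=1}^r m_k = O(d^2)$, so the first two terms on the right-hand side are together $O(d^2)$ and can be folded into the error.

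For the remaining sum I would invoke Stirling, $\log(m!) = m \log m - m + O(\log m)$, and use $m_k \le 2d$, $r \le d$ to obtain
$$\sum_{k=1}^r \log(m_k!) = \sum_{k=1}^r m_k \log m_k + O(d^2).$$
The leading asymptotics then reduce to estimating $T_d := \sum_{k=1}^r m_k \log m_k$ in each case, using the standard identity
$$\sum_{k=1}^{N} k \log k = \tfrac{N^2}{2} \log N + O(N^2)$$
(which follows from Euler--Maclaurin, or again from Stirling applied to $\log(N!)$).

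Case (ii) is immediate since $m_k = k$ and $r = d-1$, giving $T_d = \tfrac{d^2}{2} \log d + O(d^2)$. In case (iii) with $r = d$ and $m_k = 2k-1$, one rewrites $T_d = \sum_{k=1}^d (2k-1)\log(2k-1)$ and compares it to the full sum over $j=1,\dots,2d$, obtaining $T_d = d^2 \log d + O(d^2)$. Case (i) is analogous with $r \approx d/2$, giving $T_d = \tfrac{d^2}{4} \log d + O(d^2)$. In every case this matches $\tfrac{n e^2 d^2}{4} \log d$, which is the claim. The one mild subtlety is the anomalous term $m_r = r-1$ in the $d$-even branch of case (i), but modifying a single summand changes $T_d$ by at most $O(d \log d)$ and is absorbed in the $O(d^2)$ error.
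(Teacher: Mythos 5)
Your proposal is correct and follows essentially the same route as the paper: take logarithms in Macdonald's formula from Proposition~\ref{prop:volK}, dispose of $\log\kappa$ and the $\pi$-powers as $O(d^2)$, and use Stirling together with the asymptotic $\sum_{k=1}^{r} k\log k = \tfrac{r^2}{2}\log r + O(r^2)$ to extract the leading term $-\tfrac{n}{4}(ed)^2\log d$. The paper gives this as a one-line hint, while you spell out the case-by-case bookkeeping (including the anomalous $m_r = r-1$ when $d$ is even), but there is no substantive difference in method.
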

\begin{proof}
  Use Proposition~\ref{prop:volK} and~$\sum_{k=1}^r k\log k = \frac{r^2}{2}\log
  r + O(r^2)$.
\end{proof}

\subsection{Volume of a ball}
In order to find a lower bound for the volume of certain balls~$\B(t)$, we will need
to compute a lower bound for an integral of the form
  \[
    \int \prod_{i<j}\sinh(a_i-a_j)^{m}da_i,
  \]
where the integral is over the~$(a_i)_i$ with~$|a_i|\le t$ and~$\displaystyle{\sum_i a_i = 0}$. The domain for the~$a_i$ is the
intersection of a hypercube with the hyperplane~$\displaystyle{\sum a_i=0}$, i.e. a simplex. On
the other hand, the integrand is small when some~$|a_i-a_j|$ is small, that is
when~$(a_i)$ is close to one of the hyperplanes~$a_i=a_j$. To find a lower
bound, we will restrict to a subset of the simplex that is far from those
hyperplanes, and where the $a_i$ vary independently, so that we can compute the
integral. Moreover, the integrand increases exponentially, so the size of the subset does
not contribute significantly to the value of the integral, while the values of
the integrand do; so we need a subset where many of the~$a_i$ are close to~$t$.
This is achieved using the following technical lemma.

\begin{lem}\label{lem:intervals}
  Let~$k\ge 1$ be an integer. Then there exists~$k$
  intervals~$[\alpha_i,\beta_i]$ such that for all~$a_i\in [\alpha_i,\beta_i]$
  we have:
  \begin{enumerate}[(1)]
    \item $|a_i|\le 1$;
    \item $|\sum_{j=1}^k a_j|\le 1$;
    \item\label{item:longueur} $\beta_i-\alpha_i \ge \frac{1}{4(k+1)^2}$;
    \item $|a_i-a_j|\ge \frac{1}{4(k+1)^2}$;
    \item $|a_i+\sum_{j=1}^{k}a_j| \ge \frac{1}{4(k+1)^2}$;
    \item\label{item:many} $|\{j\colon a_j \ge \frac{1}{4}\}| \ge \frac{k+1}{5}$.
  \end{enumerate}
\end{lem}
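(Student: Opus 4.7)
The plan is to exhibit the intervals explicitly: set $c_i = \frac{1}{2} - \frac{i-1}{k}$ for $i = 1, \ldots, k$ and take $[\alpha_i, \beta_i] = [c_i - r, c_i + r]$, where $r = \frac{1}{8(k+1)^2}$. The structural observation motivating this choice is that if one introduces the ``ghost'' point $c_{k+1} := -\sum_{j=1}^k c_j$ (appearing implicitly in condition~(5), which demands that $a_i$ be separated from $-\sum_j a_j$), then a telescoping sum gives $\sum_{j=1}^k c_j = \frac{1}{2}$, hence $c_{k+1} = -\frac{1}{2}$. Thus the full list $c_1, \ldots, c_{k+1}$ consists of $k+1$ points equally spaced by $\frac{1}{k}$ filling $[-\frac{1}{2}, \frac{1}{2}]$, deliberately weighted toward the positive side.

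Conditions~(1), (2) and~(3) are then immediate from $|c_i|\le 1/2$, $|\sum c_j|=1/2$, and the definition of $r$. Conditions~(4) and~(5) follow from the center spacing and the triangle inequality:
\[
|a_i - a_j| \ge \tfrac{1}{k} - 2r, \qquad \Bigl|a_i + \sum_{j=1}^k a_j\Bigr| \ge \Bigl(c_i + \tfrac{1}{2}\Bigr) - (k+1)r \ge \tfrac{1}{k} - (k+1)r,
\]
both of which comfortably exceed $\frac{1}{4(k+1)^2}$ since $(k+1)r = \frac{1}{8(k+1)} \ll \frac{1}{k}$.

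The step I expect to require the most care is condition~(6), which is what forced the asymmetric placement of the centers in the first place. After shrinking each interval by $r$ to cover the worst case $a_j = c_j - r$, one needs at least $(k+1)/5$ indices $j \in \{1, \ldots, k\}$ satisfying $c_j \ge \tfrac{1}{4} + r$; the count equals $\lfloor 1 + k/4 - kr \rfloor$, and since $kr < 1/8$ this is at least $\lfloor k/4 + 7/8 \rfloor$. A brief case analysis on the residue of $k$ modulo~$4$ (together with direct verification for $k \in \{1, 2, 3\}$, where $(k+1)/5 < 1$ and a single positive center suffices) then gives the required inequality for all $k \ge 1$.
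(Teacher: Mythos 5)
Your proof is correct, and it uses a genuinely different explicit construction from the one in the paper. You place the $k$ centres at $c_i = \tfrac{1}{2} - \tfrac{i-1}{k}$, so that the ``ghost'' point $c_{k+1} := -\sum_j c_j$ lands exactly at $-\tfrac{1}{2}$ and all $k+1$ relevant points are evenly spaced with gap $\tfrac{1}{k}$ filling $[-\tfrac{1}{2},\tfrac{1}{2}]$. The paper instead takes $c_i = \tfrac{2i}{k+1} - 1$, placing the ghost point at the \emph{centre} of the configuration (it equals $0$ when $k$ is even, $-\tfrac{1}{k+1}$ when $k$ is odd), which forces an ad hoc adjustment of the middle centre when $k$ is odd to avoid a collision with the ghost. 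Your asymmetric placement, with the ghost pinned to one extreme, cleanly avoids any parity case split for conditions (1)--(5), which is a mild simplification. The trade-off is that your centres span only $[-\tfrac{1}{2},\tfrac{1}{2}]$ rather than roughly $[-1,1]$, so the fraction of $a_j$ above $\tfrac{1}{4}$ in condition (6) is about $\tfrac{k}{4}$ rather than the paper's roughly $\tfrac{3k}{8}$ --- still comfortably above the required $\tfrac{k+1}{5}$, but with less slack; your case analysis on $k \bmod 4$ does go through (the tight case is $k \equiv 0$, where $\lfloor k/4 + 7/8\rfloor = k/4 \ge (k+1)/5$ iff $k \ge 4$, and $k \in \{1,2,3\}$ you check by hand). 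All the numerical estimates you state --- $|a_i - a_j| \ge \tfrac{1}{k} - 2r$, $|a_i + \sum a_j| \ge \tfrac{1}{k} - (k+1)r$, $kr < \tfrac{1}{8}$ --- are verified and do clear the required bound $\tfrac{1}{4(k+1)^2}$.
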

\begin{proof}
  Let~$c_1,\dots,c_k$ be defined by~$c_i = \frac{2i}{k+1}-1$, except when~$k$
  is odd, where~$c_{\frac{k+1}{2}} = \frac{1}{k+1}$. Let~$\alpha_i =
  c_i-\frac{1}{8(k+1)^2}$ and~$\beta_i = c_i+\frac{1}{8(k+1)^2}$. We claim that they
  satisfy the required properties.
  \begin{enumerate}[(1)]
    \item In absolute value, the extremal points of the intervals
      are~$\frac{k-1}{k+1}+\frac{1}{8(k+1)^2}\le 1$.
    \item The sum~$\sum_{j=1}^k c_j$ is~$0$ if~$k$ is even,
      and~$\frac{1}{k+1}$ if~$k$ is odd. The required inequalities
      become~$|\frac{k}{8(k+1)^2}|\le 1$ and~$|\frac{1}{k+1} + \frac{k}{8(k+1)^2}|\le
      1$, which are true.
    \item By definition we have~$\beta_i-\alpha_i = \frac{1}{4(k+1)^2}$.
    \item The minimum separation between the centres~$c_i$ is~$\frac{1}{k+1}$,
      and we have~$\frac{1}{k+1}-\frac{1}{4(k+1)^2}\ge \frac{1}{4(k+1)^2}$.
    \item The minimum separation between a~$c_i$ and~$-\sum_{j=1}^k c_j$
      is at least~$\frac{1}{k+1}$, and we
      have~$\frac{1}{k+1}-\frac{1}{8(k+1)^2}-\frac{k}{8(k+1)^2}\ge
      \frac{1}{4(k+1)^2}$.
    \item If~$k\le 4$ it is obvious, and when $k\ge 5$ we have~$\alpha_i\ge \frac{1}{4}$ if and only if~$i\ge
      \frac{k+1}{8}(5+\frac{1}{2(k+1)^2})$, and the number of such~$i$ is at
      least~$\frac{k+1}{5}$.
  \end{enumerate}
\end{proof}

We can now give a lower bound for the volume of a certain ball $\B(t) \subset G$  with  the above Haar measure $d\mu$.

\begin{propo}\label{prop:volball}
  Let~$t\ge 1$, and let~$\B(t) = \{g\in G\mid \rho(g)\le t\}$. Assume~$d\ge 2$. Then 
  
  \[ \log \mu(\B(t)) \geq -\frac{3n(de)^2}{2} \log d + \frac{n(de)^2}{200} t + O(d^2)\cdot
  \]
\end{propo}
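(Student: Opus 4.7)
First I would apply Proposition~\ref{prop:integralKAK} to the indicator $f = \mathbf 1_{\B(t)}$. Since $\rho(g) = \max_i |a_i|$ in Cartan decomposition, this gives
\[
\mu(\B(t)) = \frac{(ne)^{(d-1)/2}\sqrt{d}}{\mu(\centre_K(A))}\,\mu(K)^2 \cdot I(t),
\]
where $I(t)$ is the integral of $\prod_{i<j}\sinh(a_i-a_j)^{ne^2}$ over the positive Weyl chamber $\{a_1>\cdots>a_{d-1}>a_d\}$ (with $a_d := -\sum_{i<d}a_i$) subject to $|a_i|\le t$ for $1\le i\le d$. Corollary~\ref{cor:vol-compact} and Lemma~\ref{lemm:volcentre} show that the prefactor contributes $-\tfrac{n}{2}(de)^2\log d + O(d^2)$, so it suffices to prove
\[
\log I(t) \ge -n(de)^2\log d + \tfrac{n(de)^2}{200}\,t + O(d^2).
\]

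To lower-bound $I(t)$, I would apply Lemma~\ref{lem:intervals} with $k=d-1$, rescale its intervals by $t$, and then reorder the $d$ values $a_1,\dots,a_{d-1},a_d$ in decreasing order. This reordering is a volume-preserving linear change of variables on $\R^{d-1}$ (the underlying permutation of $\R^d$ has determinant $\pm1$ and descends to the traceless hyperplane), and properties~(4)--(5) of the lemma keep the $d$ values pairwise separated so that the sorting is constant on the box. The result is a box $B$ inside the Weyl chamber with $\vol(B)\ge(t/(4d^2))^{d-1}$ and $a_i-a_j\ge t/(4d^2)$ for all $i<j\le d$. Moreover, the regular spacing of the centres $c_i = 2i/d-1$ together with property~(6) of the lemma (at least $\lceil d/5\rceil$ values are $\ge t/4$) yields $a_i-a_j\ge c\,t(j-i)/d$ for some absolute constant $c>0$ after reordering, which ensures that sufficiently many pairs carry differences of order~$t$.

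On $B$ I would then bound the integrand using $\sinh(x)\ge x$ for $x\ge 0$ and $\sinh(x)\ge e^x/4$ for $x\ge\log 2$. Splitting $\{(i,j): i<j\}$ into $P_s = \{a_i-a_j<\log 2\}$ and its complement $P_l$:
\[
\sum_{i<j}\log\sinh(a_i-a_j)\ \ge\ \sum_{P_l}\!\bigl((a_i-a_j)-\log 4\bigr)\ +\ \sum_{P_s}\!\log(a_i-a_j).
\]
For $(i,j)\in P_s$, $\log(a_i-a_j)\ge\log(t/(4d^2))\ge -2\log d - O(1)$ (using $t\ge 1$), so summing over $|P_s|\le\binom{d}{2}$ pairs and multiplying by $ne^2$ contributes $\ge -n(de)^2\log d + O(d^2)$, which accounts for the entire $\log d$ budget. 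For $P_l$, the regular spacing produces $\sum_{i<j}(a_i-a_j)\ge c'td^2$ for some absolute $c'>0$; after subtracting the bounded contribution $\sum_{P_s}(a_i-a_j)\le \binom{d}{2}\log 2$ and the $|P_l|\log 4$ term, both $O(d^2)$, we obtain $ne^2\sum_{P_l}((a_i-a_j)-\log 4)\ge c'\,n(de)^2\,t + O(d^2)$. Combining with $\log\vol(B)\ge -O(d\log d)=O(d^2)$ yields the required bound, with any positive constant smaller than $c'$ (in particular $1/200$) admissible in the statement.

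The main technical obstacle is the bookkeeping around the sorting permutation: one has to check carefully that the separation estimates from properties~(4)--(5) of Lemma~\ref{lem:intervals} translate, after reordering, into the quantitative bound $a_i-a_j\ge c\,t(j-i)/d$ used above, uniformly in the parity of $d-1$ (since the lemma treats odd and even~$k$ slightly differently). Once this is verified, the proof assembles cleanly: the $\log d$ and linear-in-$t$ coefficients arise respectively from the $P_s$ and $P_l$ contributions, and all remaining errors are absorbed in $O(d^2)$ since $t\ge 1$ and $d\ge 2$.
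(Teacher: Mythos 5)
Your proposal is correct and follows essentially the same strategy as the paper: apply Proposition~\ref{prop:integralKAK} to the indicator of~$\B(t)$, dispose of the prefactor via Corollary~\ref{cor:vol-compact} and Lemma~\ref{lemm:volcentre}, and lower-bound the integral by restricting to a box obtained from Lemma~\ref{lem:intervals}. Your split of pairs into small and large gaps (with $\sinh(x)\ge x$ and $\sinh(x)\ge e^x/4$ respectively) and your use of the even spacing of the centres to control $\sum_{i<j}(a_i-a_j)$ differ only cosmetically from the paper's uniform bound $\sinh(x)\ge\frac{1}{4d^2}e^x$ combined with property~(6) of the lemma, and both yield the same $-n(de)^2\log d$ and linear-in-$t$ terms up to $O(d^2)$.
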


\begin{proof}
  We apply
  Proposition~\ref{prop:integralKAK} with~$f$ the indicator function of~$\B(t)$.
  The formula reads
  \[
  \displaystyle{  \mu(\B(t)) =
    \frac{(ne)^{\frac{d-1}{2}} \sqrt{d}}{\mu(\centre_K(A))} \mu(K)^2 I\text{, where }I
    =\int_{a_i}\prod_{i<j}\sinh(a_i-a_j)^{ne^2}da_i,}
  \]
  where the integral is over the~$a_i\in [-t,t]$ with~$(a_i)_i$ decreasing and~$\sum_i a_i=0$.
   To compute a lower bound for the integral, we apply
  Lemma~\ref{lem:intervals} to~$k=d-1$. We have
  intervals~$[\alpha_i,\beta_i]$, and after reordering the intervals and scaling
  them by~$t$, we obtain that
  \[
    I\ge \int\prod_{i<j}\sinh(a_i-a_j)^{ne^2}da_i,
  \]
  where for~$i<d$, $a_i$ ranges over some
  interval~$[t\alpha_{\sigma(i)},t\beta_{\sigma(i)}]$, and~$\displaystyle{a_d =
  -\sum_{i=1}^{d-1} a_i}$.
  Since ${x\mapsto \sinh(x)\exp(-x) = \frac{1 - \exp(-2x)}{2}}$ is increasing, for
  all $\displaystyle{x\ge \frac{t}{4d^2}}$ we have $${\sinh(x) \ge
  \frac{1-\exp(-t/2d^2)}{2}\exp(x) \ge \frac{1-\exp(-1/2d^2)}{2}\exp(x) \ge
  \frac{1}{4d^2}\exp(x)}\cdot$$ We get
  \[
    I \ge
    \left(\frac{1}{4d^2}\right)^{ne^2d(d-1)/2}\int
    \prod_{i<j}\exp(a_i-a_j)^{ne^2}da_i =
    \left(\frac{1}{4d^2}\right)^{ne^2d(d-1)/2}\int
    \exp\left(\sum_{i<j}(a_i-a_j)\right)^{ne^2}da_i.
  \]
  We compute the term~$\beta$ that appears in the exponential.
  For all~$(a_i)$ such that~$\displaystyle{\sum_{i=1}^d a_i=0}$, we have $$\begin{array}{rcl} \beta &=&\displaystyle{ \sum_{1\leq i<j\le
  d}(a_i-a_j) = \sum_{i<j}a_i - \sum_{i<j}a_j} \\ &=&\displaystyle{ \sum_i(d-i)a_i - \sum_j(j-1)a_j
  = \sum_{i=1}^d(d+1-2i)a_i }\\ &=& \displaystyle{-2\sum_{i=1}^d ia_i \cdot} \end{array}$$ Now since~$\displaystyle{a_d =
  -\sum_{i=1}^{d-1}a_i}$, we have~$\displaystyle{\beta = 2\sum_{i=1}^{d-1}(d-i)a_i}$. This gives
  \[
    I \ge (2d)^{-d(d-1)ne^2}\int \exp\Bigl(2ne^2\sum_{i=1}^{d-1}(d-i)a_i\Bigr)da_i  \cdot
  \]
  By properties~(\ref{item:longueur}) and~(\ref{item:many}) of Lemma~\ref{lem:intervals}, we obtain
  \[
    I \ge
    (2d)^{-d(d-1)ne^2}\exp\left(2ne^2\sum_{i=1}^{\lfloor d/5 \rfloor}i\frac{t}{4}\right)\left(\frac{1}{4d^2}\right)^{(d-1)ne^2}
    \geq (2d)^{-(d-1)(d+2)ne^2}\exp\left(\frac{d^2ne^2}{200}t\right) \cdot
  \]
  In particular, 
  \[
  \log I \ge -n(de)^2\log d + \frac{n(de)^2}{200} t + O(d^2) \cdot
  \]
  We conclude by using Corollary~\ref{cor:vol-compact} and Lemma~\ref{lemm:volcentre}. 
\end{proof}

\section{Multiplicative construction}\label{sec:multconstr}

We consider the following arithmetic group code.   Let~$F$ be a number field of degree $n$ over $\Q$, and let
$\alg$ be  a central division algebra of degree~$d\ge 2$ over~$F$.
If~$d=2$, assume that~$\alg$ is unramified at all real places.
Let $\order$ be a maximal order
in~$\alg$. We let~$\G$ be the algebraic group defined by the reduced norm~$1$
subgroup~$\alg^1\subset\alg^\times$, and
\[
  \aritgp = \order^1=\{x\in \order \mid \nrd(x)=1\}.
\]

Let~$S$ be a set of prime ideals of~$\Z_F$ that are unramified in~$\alg$  and such
that for all~$\p\in S$, the residue field~$\Z_F/\p$ is isomorphic to a common
finite field~$\F_{q_0}$.
For all~$\p\in S$, we fix an isomorphism~$\iota_\p\colon \order/\p\order \cong
\mat_d(\F_{q_0})$.

 Let~$\Alpha = \mat_d(\F_{q_0})$ so that~$q = q_0^d$, $s=|S|$ and~$N = ds$, and
define~$\Theta\colon \aritgp \to \Alpha^s$ to be the map sending~$\gamma\in\aritgp$
to the word formed by the~$\iota_\p(\gamma)$ for~$\p\in S$.

 Let us write $n=r_1+2r_2$ and $r_1=u+r$, where $(r_1,r_2)$ is the signature 
 of $F$, $u$ is the number of real places~$\sigma$ that are
 unramified in $A$, and $r$ is the number of real places~$\sigma$ that
 ramify in $A$. Let
 \[
   G =
    \prod_{\sigma\in\PP_\infty}\SL_{d_\sigma}(D_\sigma)\cong  \SL_d(\R)^u
    \times \SL_{d/2}(\Hamil)^r \times \SL_d(\C)^{r_2}.
  \]

Following the notations of Section \ref{section:Cartan}, we define~$\rho\colon G\to \R_{\ge 0}$ componentwise: for all~$g =
(g_\sigma)_{\sigma\in\PP_\infty}\in G$, let
\[
  \rho(g) = \max_{\sigma\in\PP_\infty} \rho(g_\sigma).
\]
For~$t>0$, we define the following compact subset~$\B(t)\subset G$:
\[
  \B(t) = \{g\in G\mid \rho(g)\le t\}.
\]
Note that for all~$g,h\in \B(t)$, we have~$h^{-1}\in \B(t)$ and~$\rho(h^{-1}g) \leq 2t$, by Corollary~\ref{cor:subadd}.

Let~$\Co$ be the code attached to~$(\B(t),\aritgp,\Theta)$ as in
Section~\ref{sec:construction}.
The goal of
this section is to analyse the code~$\Co$, and to obtain aymptotically good
families of codes from this construction.

\subsection{Minimal distance}

Let us start with the following lemma:

\begin{lem}\label{lem:colnorm}
  Let~$\alg$ be a central simple algebra of degree~$d$ over a number field~$F$,
  and let~$\order$ be an order in~$\alg$. Let~$\p$ be a prime ideal of~$\Z_F$ such
  that there is an isomorphism~$\iota_\p\colon\order/\p\order \cong \mat_d(\F_{q})$.
  Let~$x\in\order$, and let~$r$ be the rank of the
  matrix~$\iota_\p(x)$.
  Then~$|\order/(\p\order+x\order)| = q^{d(d-r)}$.
\end{lem}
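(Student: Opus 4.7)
The plan is to identify the quotient $\order/(\p\order+x\order)$ with a quotient of $\mat_d(\F_q)$ by a right ideal via the isomorphism $\iota_\p$, and then count elements in that right ideal directly.

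First I would observe that $\p\order \subset \p\order + x\order$, so we have a surjection
\[
  \order/\p\order \twoheadrightarrow \order/(\p\order + x\order),
\]
whose kernel is the image of $x\order$ in $\order/\p\order$. Under the isomorphism $\iota_\p\colon \order/\p\order \cong \mat_d(\F_q)$, this image corresponds to the right ideal $M\cdot\mat_d(\F_q)$ where $M = \iota_\p(x)$. Hence
\[
  |\order/(\p\order+x\order)| = \frac{q^{d^2}}{|M\cdot\mat_d(\F_q)|},
\]
and the task reduces to computing the cardinality of $M\cdot\mat_d(\F_q)$ for a matrix $M$ of rank $r$ in $\mat_d(\F_q)$.

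Next I would use the standard rank factorisation over a field: write $M = P E_r Q$ with $P,Q\in\GL_d(\F_q)$ and $E_r = \diag(1,\ldots,1,0,\ldots,0)$ having $r$ ones. Since $Q$ is a unit, $Q\cdot\mat_d(\F_q)=\mat_d(\F_q)$, and left multiplication by $P$ is a bijection of $\mat_d(\F_q)$, so $|M\cdot\mat_d(\F_q)| = |E_r\cdot\mat_d(\F_q)|$. But $E_r N$ is simply the matrix obtained from $N$ by zeroing out its last $d-r$ rows, so $E_r\cdot\mat_d(\F_q)$ is exactly the set of matrices whose last $d-r$ rows vanish, and this set has cardinality $q^{rd}$.

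Combining the two steps yields
\[
  |\order/(\p\order+x\order)| = q^{d^2-rd} = q^{d(d-r)},
\]
as claimed. There is no real obstacle in this argument: the content is entirely in correctly identifying the quotient with a quotient of the matrix algebra by a principal right ideal, after which the rank normal form over a field makes the counting immediate.
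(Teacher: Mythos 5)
Your proof is correct and follows essentially the same approach as the paper: identify $\order/(\p\order+x\order)$ with $\mat_d(\F_q)/(M\cdot\mat_d(\F_q))$ via $\iota_\p$, then count the size of the principal right ideal $M\cdot\mat_d(\F_q)$. The paper gets $\dim_{\F_q}(M\cdot\mat_d(\F_q)) = dr$ directly by viewing $\mat_d(\F_q)$ column-by-column, whereas you use the rank normal form $M = PE_rQ$; this is a minor cosmetic difference in the final counting step.
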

\begin{proof}
  Let~$m = \iota_\p(x)$. We have~$\order/(\p\order + x\order) \cong
  \mat_d(\F_q)/(m\cdot\mat_d(\F_q))$. Since $\dim_{\F_q} (m\cdot\F_q^d) = r$ by
  definition, we have~$\dim_{\F_q} (m\cdot\mat_d(\F_q)) = dr$, and
  therefore~$\dim_{\F_q}\mat_d(\F_q)/(m\cdot \mat_d(\F_q)) = d(d-r)$, proving the
  result.
\end{proof}

Concerning the minimum sum-rank distance $d_R(\Co)$ of the code $\Co$, we obtain: 
\begin{propo}\label{prop:distmin}
  We have
  $$d_R(\Co) \geq N - nd^2 \frac{\log 2}{\log q} - \frac{2nd^2t}{\log q} \cdot$$
\end{propo}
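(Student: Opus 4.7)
My plan is to take two distinct codewords corresponding to $\gamma_1,\gamma_2\in\aritgp\cap c\B(t)$, form $\gamma=\gamma_1^{-1}\gamma_2\in\aritgp\setminus\{1\}$, and bound $\N(\gamma-1)$ from both sides: a lower bound driven by the sum-rank distance $d_R$, and an upper bound driven by $\rho(\gamma)\le 2t$. Equating the two will give the claimed inequality.

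First I would observe that since the reductions $\iota_\p(\gamma_i)$ lie in $\SL_d(\F_{q_0})$ and are therefore invertible, we have $\rk(\iota_\p(\gamma_2)-\iota_\p(\gamma_1))=\rk(\iota_\p(\gamma)-1)$; hence $d_R(\Theta(\gamma_1),\Theta(\gamma_2))=\sum_{\p\in S}r_\p$ where $r_\p=\rk(\iota_\p(\gamma)-1)$. Then Lemma \ref{lem:colnorm}, applied with $x=\gamma-1$, yields $|\order/(\p\order+(\gamma-1)\order)|=q_0^{d(d-r_\p)}$ for each $\p\in S$.

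Next I would get a lower bound on $\N(\gamma-1)=|\order/(\gamma-1)\order|$ by a Chinese Remainder Theorem argument for right ideals. The right ideals $I_\p:=\p\order+(\gamma-1)\order$ are pairwise comaximal because the central ideals $\p\subset\Z_F$ are pairwise coprime (distinct maximal ideals), so $\p\order+\p'\order=\order$ for $\p\ne\p'$ in $S$. Thus the natural map $\order/(\gamma-1)\order\to\prod_{\p\in S}\order/I_\p$ is surjective, giving
\[
\N(\gamma-1)\ \ge\ \prod_{\p\in S}q_0^{d(d-r_\p)}\ =\ q_0^{d(N-d_R)}\ =\ q^{N-d_R}.
\]

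For the upper bound, I would use the archimedean product formula: $\N(\gamma-1)=\prod_{\sigma\in\PP_\infty}|\sigma(\nrd(\gamma-1))|^{d n_\sigma}$, where at each infinite place $\sigma(\nrd(\gamma-1))$ coincides with the reduced norm of $\sigma(\gamma)-1$ in $\mat_{d_\sigma}(D_\sigma)$. Since $\gamma_1,\gamma_2\in c\B(t)$, Corollary \ref{cor:subadd} and $\B(t)^{-1}=\B(t)$ give $\rho(\sigma(\gamma))\le 2t$ for every $\sigma$, so Corollary \ref{cor:nrdbound} applied place by place yields $|\nrd(\sigma(\gamma)-1)|\le 2^{d_\sigma}\exp(2d_\sigma t)$. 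Multiplying these estimates and using $\sum_\sigma n_\sigma d_\sigma\le nd$ (since $n_\sigma d_\sigma\le n_\sigma d$ and $\sum_\sigma n_\sigma=n$), I get
\[
\log\N(\gamma-1)\ \le\ d\sum_\sigma n_\sigma d_\sigma(\log 2+2t)\ \le\ nd^2(\log 2+2t).
\]
Combining with $(N-d_R)\log q\le\log\N(\gamma-1)$ and rearranging gives the desired bound. The main subtle point is the CRT step, which requires being careful that the relevant ideals, although one-sided, are still pairwise comaximal and hence amenable to the noncommutative CRT for right modules; everything else is bookkeeping of norms and the various archimedean degree invariants $(n_\sigma,e_\sigma,d_\sigma)$.
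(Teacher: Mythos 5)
Your proof is correct and follows essentially the same route as the paper: form $\gamma=\gamma_1^{-1}\gamma_2$, use Lemma~\ref{lem:colnorm} and a Chinese Remainder argument to get the lower bound $\N(\gamma-1)\ge q^{N-d_R}$, and use Corollaries~\ref{cor:subadd} and~\ref{cor:nrdbound} on each archimedean factor for the upper bound. The only thing worth tightening is the CRT step: pairwise comaximality of one-sided ideals does not by itself propagate through an inductive CRT for more than two ideals, but here it does because each $I_\p$ contains the \emph{two-sided} ideal $\p\order$, and the $\p\order$ for distinct $\p\in S$ are pairwise comaximal two-sided ideals, so $\order\twoheadrightarrow\prod_\p\order/\p\order\twoheadrightarrow\prod_\p\order/I_\p$ and $(\gamma-1)\order$ lies in the kernel, giving the required surjection of $\order/(\gamma-1)\order$ onto the product; the paper states the resulting inequality $|\order/z\order|\ge\prod_\p|\order/(\p\order+z\order)|$ without comment.
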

\begin{proof}
  Let~$x \neq
  y$ be elements of~$\aritgp\cap c\B(t)$ such that
  $d_R(\Theta(x),\Theta(y))=d_R(\Co)$.
  Note that since~$y\in \Gamma$, the matrix~$\iota_\p(y)$ has determinant~$1$
  and is invertible with inverse~$\iota_\p(y)^{-1} = \iota_\p(y^{-1})$.
  Let~$z=y^{-1}x-1\in\order$; the element~$z$ is nonzero and
  therefore~$\N(z)\neq 0$ since $A$ is a division algebra.
  For each~$\p\in S$, let~$r_\p$ be the rank
  of~$\iota_\p(z)$.
  Since multiplying a matrix by the invertible matrix~$\iota_\p(y)^{-1}$ on the left does not change
  the rank, we get~$\displaystyle{\sum_{\p\in S} r_\p = d_R(\Co)}$.
  Moreover, for all~$\p\in S$ 
  we have~$|\order/(\p\order+z\order)|\ge
  q_0^{d(d-r_\p)}$ by Lemma~\ref{lem:colnorm}. We obtain
  \[
    \N(z) = |\order/z\order| \ge \prod_{\p\in S}|\order/(\p\order+z\order)| \ge \prod_{\p\in
    S}q_0^{d(d-r_\p)} = q_0^{d(N-d_R(\Co))} = q^{N-d_R(\Co)}.
  \]
  On the other hand, let us write $x=cx_0$ and $y=cy_0$, with $x_0,y_0 \in
  \B(t)$ and where~$c\in G$ is as in Section~\ref{sec:construction}. Since
  $\rho(y^{-1}x)=\rho(y_0^{-1}x_0) \leq 2t$, by
  Corollary~\ref{cor:nrdbound} we obtain
  \[
    \N(z) = \prod_{\sigma\colon F\hookrightarrow \C}|\sigma(\nrd(z))|^d \le
    2^{nd^2}\exp(nd^2\rho(y^{-1}x))\le 2^{nd^2}\exp(2nd^2t).
  \]
  Taking logarithms and dividing by~$\log q$ gives the result.
  \end{proof}

As consequence, we have:

\begin{cor} \label{coro:bound-t}
Suppose that $t>0$ is such that $\displaystyle{2t \leq \frac{N \log q}{n d^2}
  - \log 2}$. Then $\Theta|_{\aritgp \cap c\B(t)}$ is injective.
\end{cor}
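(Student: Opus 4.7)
The plan is to argue the contrapositive, essentially re-running the proof of Proposition~\ref{prop:distmin} on a hypothetical collision pair. Suppose that $\Theta|_{\aritgp\cap c\B(t)}$ fails to be injective, so there exist distinct $x, y\in\aritgp\cap c\B(t)$ with $\Theta(x) = \Theta(y)$.

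Since $\nrd(y) = 1$ and $\order$ is a maximal order, $y$ is a unit in~$\order$, hence $y^{-1}\in\order$. Set $z = y^{-1}x - 1\in\order$; then $z\neq 0$ and, since $A$ is a division algebra, $\N(z)\neq 0$. The assumption $\Theta(x) = \Theta(y)$ forces $\iota_\p(x) = \iota_\p(y)$ for every $\p\in S$, and combined with the invertibility of $\iota_\p(y)$ this yields $\iota_\p(y^{-1}x) = I$, i.e.\ $\iota_\p(z) = 0$. Equivalently, $z\in\p\order$, so the rank $r_\p$ of $\iota_\p(z)$ equals $0$ for every $\p\in S$, and $\p\order+z\order = \p\order$.

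Reusing the two-sided estimate on $\N(z)$ from Proposition~\ref{prop:distmin}: the pairwise coprimality of the right ideals $\p\order + z\order$ for $\p\in S$, combined with $\N(z) = |\order/z\order|$ and Lemma~\ref{lem:colnorm}, yields
\[
  q^N \;=\; q_0^{d^2|S|} \;=\; \prod_{\p\in S}|\order/(\p\order+z\order)| \;\leq\; \N(z),
\]
while $x, y\in c\B(t)$ implies $\rho(y^{-1}x)\leq 2t$ by Corollary~\ref{cor:subadd}, and then Corollary~\ref{cor:nrdbound}, applied at each archimedean place and multiplied over all $n$ embeddings $F\hookrightarrow\C$, gives $\N(z)\leq 2^{nd^2}\exp(2nd^2 t)$, exactly as in the proof of Proposition~\ref{prop:distmin}.

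Combining the two bounds yields $q^N \leq 2^{nd^2}\exp(2nd^2 t)$, which after taking logarithms reads $2t \geq \frac{N\log q}{nd^2} - \log 2$, contradicting the hypothesis. The only delicate point is step two---verifying that the collision $\Theta(x) = \Theta(y)$ really forces $r_\p = 0$ for every $\p\in S$, which relies on the fact that an element of $\order$ of reduced norm $1$ is automatically a unit of $\order$; once this is in place the argument is a direct specialisation of the machinery already developed for Proposition~\ref{prop:distmin}, and no new estimate is needed.
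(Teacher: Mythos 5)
Your argument is correct and is exactly the paper's intended route: the paper gives no separate proof of this corollary but states it ``as consequence'' of Proposition~\ref{prop:distmin}, and the intended reasoning is precisely your specialisation of that proposition's proof to a hypothetical collision pair, where $r_\p = 0$ for all~$\p\in S$ forces $\N(z)\ge q^N$. One tiny imprecision you inherit from the paper's own statement: combining $q^N \le \N(z) \le 2^{nd^2}\exp(2nd^2 t)$ yields $2t \ge \frac{N\log q}{nd^2}-\log 2$, which at the boundary value $2t = \frac{N\log q}{nd^2}-\log 2$ does not strictly contradict the hypothesis ``$\le$''; a clean statement would use a strict inequality, but this does not affect the intended conclusion or its use later in the paper.
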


\subsection{Number of codewords}

Recall that $\displaystyle{G= \prod_{\sigma \in \PP_\infty}
\SL_{d_\sigma}(D_\sigma)}$. Then $G$ inherits the product topology and the
product measure $\displaystyle{\otimes_\sigma d\mu_\sigma}$, where the volume
form $d\mu_\sigma$ is normalised  as in Section \ref{section:Haar-measure}.
We start with Prasad's formula for the  volume~$G/\order^1$.
 
\begin{propo}\label{prop:volPrasad}
  We have
  \[
    \mu(G/\order^1) =
    d^{\frac{n}{2}}
    \left(\frac{\Delta_A}{\Delta_F}\right)^{1/2}
    \prod_{j=2}^d\zeta_F(j)\cdot \Phi,
  \]
  where
  \[
    \Phi = \prod_{\p}\prod_{0<i<d, e_\p\nmid i}(1-N(\p)^{-i}).
  \]
\end{propo}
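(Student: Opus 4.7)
The plan is to apply Prasad's volume formula~\cite{Prasad} to the semisimple $F$-group $\G = \SL_1(\alg)$ and carefully convert from Prasad's canonical Haar measure normalisation to the one fixed in Section~\ref{section:Haar-measure}. The group $\G$ is absolutely almost simple, simply connected, and an inner form of $\SL_d$; in particular its quasi-split inner form is $\SL_d$ over $F$, so Prasad's splitting field $\ell$ coincides with $F$ and his factor $(\disc_\ell/\disc_F^{[\ell:F]})^{s/2}$ is trivial. The Tamagawa number $\tau(\G) = 1$ for $\SL_1$ of a central simple algebra is classical, and the compact real form has type $A_{d-1}$ with exponents $1,2,\dots,d-1$, so each infinite place of $F$ contributes $\prod_{j=1}^{d-1} j!/(2\pi)^{j+1}$ to the archimedean part of Prasad's formula.

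Next I would compute the local Euler factors. At a finite place $\p$ unramified in $\alg$, the group is $\SL_d(F_\p)$ and a hyperspecial parahoric has normalised volume $\prod_{i=2}^d(1-\N(\p)^{-i})$. Assembling these globally and inverting produces $\prod_{j=2}^d \zeta_F(j)$. At a ramified prime, the group is $\SL_{d_\p}(D_\p)$, the corresponding Bruhat--Tits parahoric is non-hyperspecial, and Prasad's local factor differs by precisely the indices $i$ with $e_\p\nmid i$; relocating these ``missing'' indices into a separate global correction produces exactly $\Phi$. The discriminant factor $(\disc_\alg/\disc_F)^{1/2}$ then arises by combining the $\disc_F^{\dim\G/2}$ appearing in Prasad's formula with the local indices between $\order_\p$ and a split maximal order, via the identity $\disc_\alg = \disc_F^{d^2}\N(\delta_\alg)^d$ recorded in Section~\ref{sec:csa}.

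The main obstacle, and where the bookkeeping is most delicate, is the comparison of Haar measures at the infinite places. Prasad uses the Haar measure attached to the Killing form (in Tits' normalisation), whereas Section~\ref{section:Haar-measure} uses the Riemannian measure induced by $(X,Y)\mapsto n\trd(\transp{\overline X}Y)$ on $\liesl_d(D_\sigma)$. I would compute place by place the ratio between the two bilinear forms on $\liesl_d(D_\sigma)$ using the isomorphism types from Section~\ref{sec:csa}, and check that the determinantal contributions of these ratios, accumulated over all infinite places of $F$, produce exactly the global constant $d^{n/2}$ appearing in the statement. Combining the archimedean contribution, the Tamagawa number, the local Euler factors and the discriminant terms then yields the claimed formula.
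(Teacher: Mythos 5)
Your outline matches the paper's overall strategy: apply Prasad's volume formula to $\SL_1(\alg)$, note that the splitting field is $F$ so the first discriminant factor is trivial, use $\tau(\G)=1$, identify the Gamma factor for type $A_{d-1}$ and the finite Euler factors, recover $\Phi$ from the ramified local indices, and combine $\Delta_F^{(d^2-1)/2}$ with $\N(\delta_A)^{d/2}$ to produce $(\Delta_A/\Delta_F)^{1/2}$. Up to here the plan is sound.

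The gap is in the archimedean measure comparison, which you correctly flag as the delicate step but then describe inaccurately and leave uncomputed. Prasad's archimedean normalisation is \emph{not} ``the Haar measure attached to the Killing form'': it is the measure for which a volume form on $\lieg_\sigma$ induces, on the compact dual form of $\G(F_\sigma\otimes_\R\C)$, total volume $1$. So there is no fixed bilinear form on Prasad's side to compare against $(X,Y)\mapsto n\trd(\transp{\overline X}Y)$; instead you must compute, in the paper's metric, the volume of a maximal compact subgroup $K'\cong\SU_d(\C)^n$ of $(\alg\otimes_\R\C)^1$. Concretely, the paper records the relation $\|\cdot\|_\C=\tfrac{1}{\sqrt{2}}\|\cdot\|$ between the metric on $\liesl_{d_\sigma}(D_\sigma)\otimes_\R\C\cong\liesl_d(\C)^{[F_\sigma:\R]}$ induced from the real form and the one coming from the paper's normalisation, so that $\mu_\C(K')=2^{-n(d^2-1)/2}\mu(\SU_d(\C))^n$, and then invokes Proposition~\ref{prop:volK}(\ref{item:volsud}) to get $\mu(\SU_d(\C))=2^{d^2+d/2-3/2}\sqrt{d}\prod_{k=1}^{d-1}\pi^{k+1}/k!$. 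The product $\mu_{\rm Pras}(G/\order^1)\mu_\C(K')$ then cancels all the $\pi$'s and factorials, the powers of $2$ cancel exactly, and the surviving $\sqrt{d}$ per infinite place produces the $d^{n/2}$. Without this Macdonald-type volume input the constant $d^{n/2}$ is out of reach; ``comparing determinantal contributions of bilinear forms'' does not by itself see this $\sqrt{d}$, since it arises from the covolume of $\liet_\Z$ and the coroot lengths in $\SU_d(\C)$, not from a rescaling of the ambient inner product.
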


\begin{proof}
  We use Prasad's formula \cite[Theorem 3.7]{Prasad} where the normalisation of the  volume is
  defined as follows. On each factor~$\lieg_\sigma =
  \liesl_{d_\sigma}(D_\sigma)$, the choice of a
  volume form determines one on~$\lieg_\sigma\otimes_\R\C$. Prasad chooses the
  volume form that gives volume~$1$ to a maximal compact subgroup~$K'\cong
  \SU_d(\C)^n$ of~$\G(\C) = (A\otimes_\R\C)^1$. For this normalisation, Prasad's formula
  yields
  \[
    \mu_{\rm Pras}(G/\order^1) =  \left(\frac{\Delta_A}{\Delta_F}\right)^{1/2}
    \left(\prod_{k=1}^{d-1}\frac{k!}{(2\pi)^{k+1}}\right)^{n}\prod_{j=2}^d\zeta_F(j)\cdot
    \Phi
  \]
  (for details, the reader can refer to \cite[Theorem 2.4.1.10]{Page}).
  To relate our normalisation to the one of Prasad, we
  relate the norm~$\|\cdot\|$ on~$\liesl_{d_\sigma}(D_\sigma)\otimes_\R\C\cong
  \liesl_d(\C)^{[F_\sigma\colon \R]}$ that we defined previously, to the
  norm~$\|\cdot\|_\C$ on the same Lie algebra, induced by the one
  on~$\liesl_{d_\sigma}(D_\sigma)$:
  we find~$\|\cdot\|_\C = \frac{1}{\sqrt{2}}\|\cdot\|$.
  Denoting~$\mu_\C$ the measure induced by the metric~$\|\cdot\|_\C$, we have
  \[
    \mu_\C(K') =
    2^{-\frac{n(d^2-1)}{2}} \mu(\SU_d(\C))^n
    = 2^{-\frac{n(d^2-1)}{2}}
    \left(2^{d^2+d/2-3/2}\sqrt{d}\prod_{k=1}^{d-1}\frac{\pi^{k+1}}{k!}\right)^{n}
  \]
  by Proposition~\ref{prop:volK}(\ref{item:volsud}).
  Finally, we get the formula since
  \[
    \mu(G/\order^1) = \mu_{\rm Pras}(G/\order^1)\mu_{\C}(K').
  \]
\end{proof}

\begin{cor}\label{cor:volPrasad}
  We have
  \[
    \log \mu(G/\order^1) \le 
    \frac{1}{2}\log\left(\frac{\Delta_A}{\Delta_F}\right) + O(n\log d).
  \]
\end{cor}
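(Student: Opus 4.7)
The plan is to take logarithms of the exact formula in Proposition~\ref{prop:volPrasad}, which yields
\[
  \log\mu(G/\order^1) = \tfrac{1}{2}\log\bigl(\Delta_A/\Delta_F\bigr) + \tfrac{n}{2}\log d + \sum_{j=2}^{d}\log\zeta_F(j) + \log\Phi,
\]
and then to show that all three correction terms are bounded above by $O(n\log d)$. The first of these, $\frac{n}{2}\log d$, is already manifestly $O(n\log d)$, so no work is needed there. For $\log\Phi$, I would simply note that every local factor $1-N(\p)^{-i}$ lies in the interval $(0,1]$, hence $\Phi\le 1$ and $\log\Phi\le 0$; this term only improves the upper bound.

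The only substantive step is to control $\sum_{j=2}^{d}\log\zeta_F(j)$. I would compare with the Riemann zeta function via Euler products: for a rational prime~$p$, every prime~$\p\mid p$ of $\Z_F$ satisfies $N(\p)=p^{f_\p}\ge p$, and the number of such~$\p$ is at most~$n$ (since the residue degrees and ramification indices sum to~$n$). Consequently
\[
  \prod_{\p\mid p}\bigl(1-N(\p)^{-j}\bigr)^{-1} \le \bigl(1-p^{-j}\bigr)^{-n},
\]
and multiplying over~$p$ gives $\zeta_F(j)\le\zeta(j)^n$ for every $j\ge 2$. Therefore
\[
  \sum_{j=2}^{d}\log\zeta_F(j) \;\le\; n\sum_{j=2}^{\infty}\log\zeta(j),
\]
and the latter sum converges because $\zeta(j)=1+O(2^{-j})$ as $j\to\infty$, so $\log\zeta(j)=O(2^{-j})$. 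This contribution is therefore $O(n)$, which is absorbed into the target $O(n\log d)$.

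Assembling the three bounds yields the desired inequality. I expect essentially no obstacle here: the corollary is a routine consequence of the explicit formula in Proposition~\ref{prop:volPrasad} together with an elementary comparison of Euler products. The one point worth flagging is that the bound on the zeta contribution must be uniform in~$d$, which is exactly why invoking the convergence of $\sum_j\log\zeta(j)$, rather than a term-by-term estimate, is crucial.
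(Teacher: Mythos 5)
Your proof is correct and follows essentially the same route as the paper: the paper's proof likewise reduces to $\Phi\le 1$ and $\zeta_F(j)\le\zeta(j)^n\le(1+O(2^{-j}))^n$, absorbing the $\frac{n}{2}\log d$ term into the $O(n\log d)$. You simply spell out the Euler-product comparison and the convergence of $\sum_j\log\zeta(j)$ in more detail than the paper does.
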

\begin{proof}
  It follows from~$\Phi\le 1$ and~$\zeta_F(j)\le \zeta(j)^{n}\le (1+O(2^{-j}))^n$
  for~$j\ge 2$.
\end{proof}

This estimate allows us to obtain the following bound on the rate of $\Co$:

\begin{propo}\label{prop:sizecode} 
  Assume that~$s=|S|=n$.
  For all~$t \geq 1$  as in Corollary \ref{coro:bound-t},  we have
  \[
    \frac{1}{N}\log|\Co| \ge \frac{d}{200}t - \frac{d^2-1}{2d}\log\rd_F -
    \frac{1}{2n}\log \N(\delta_A) - \frac{3d}{2}\log d + O(d).
  \]
\end{propo}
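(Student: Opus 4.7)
The plan is to combine Lenstra's volume inequality (Lemma~\ref{lem:boundratevol}) with the lower bound on $\mu(\B(t))$ from Proposition~\ref{prop:volball} and the upper bound on $\mu(G/\order^1)$ from Corollary~\ref{cor:volPrasad}. The hypothesis on~$t$ via Corollary~\ref{coro:bound-t} guarantees that $\Theta$ is injective on $\aritgp \cap c\B(t)$, so Lemma~\ref{lem:boundratevol} applies and gives $\log|\Co| \ge \log\mu(\B(t)) - \log\mu(G/\order^1)$.

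For the lower bound on $\log\mu(\B(t))$, I would use that $\B(t) = \prod_{\sigma \in \PP_\infty} \B_\sigma(t)$ by the componentwise definition of $\rho$, so the measure factors as a product. Applying Proposition~\ref{prop:volball} on each factor $\SL_{d_\sigma}(D_\sigma)$, with the parameters $(d_\sigma, e_\sigma, n_\sigma)$ specific to the place, each contributes
\[
  \log\mu_\sigma(\B_\sigma(t)) \ge -\tfrac{3}{2}n_\sigma(d_\sigma e_\sigma)^2 \log d + \tfrac{n_\sigma(d_\sigma e_\sigma)^2}{200}\, t + O(d^2).
\]
The key observation is that in each case $d_\sigma e_\sigma = d$, so $n_\sigma(d_\sigma e_\sigma)^2 = n_\sigma d^2$, and summing over $\PP_\infty$ yields $\sum_\sigma n_\sigma d^2 = n d^2$. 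Thus
\[
  \log\mu(\B(t)) \ge -\tfrac{3nd^2}{2}\log d + \tfrac{nd^2}{200}\, t + O(nd^2).
\]

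For the upper bound on the covolume, I would use Corollary~\ref{cor:volPrasad} together with the identity $\Delta_A = \Delta_F^{d^2}\N(\delta_A)^d$, which gives
\[
  \tfrac{1}{2}\log(\Delta_A/\Delta_F) = \tfrac{d^2-1}{2}\log\Delta_F + \tfrac{d}{2}\log\N(\delta_A).
\]
Combining, and using $\log\Delta_F = n\log\rd_F$, I would divide by $N = ds = dn$ to obtain
\[
  \tfrac{1}{N}\log|\Co| \ge \tfrac{d}{200}t - \tfrac{d^2-1}{2d}\log\rd_F - \tfrac{1}{2n}\log\N(\delta_A) - \tfrac{3d}{2}\log d + O(d),
\]
as claimed. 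There is no real obstacle here: the bookkeeping step that requires the most care is the sum $\sum_\sigma n_\sigma(d_\sigma e_\sigma)^2 = nd^2$ at ramified as well as unramified places, and the conversion between $\Delta_F$ and $\rd_F$ when dividing by $N$; everything else is a direct application of the results established in Sections~\ref{section:Cartan} and in the preceding subsections.
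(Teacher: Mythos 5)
Your proposal is correct and follows the same route as the paper, which is itself essentially a one-line citation of Lemma~\ref{lem:boundratevol}, Proposition~\ref{prop:volball} applied factorwise, Corollary~\ref{cor:volPrasad}, and the identity $\Delta_A = \N(\delta_A)^d\Delta_F^{d^2}$; you have simply spelled out the bookkeeping. One small detail worth flagging: Proposition~\ref{prop:volball} applied to $\SL_{d_\sigma}(D_\sigma)$ produces $\log d_\sigma$, not $\log d$, in the leading term, but since $d_\sigma e_\sigma = d$ and $e_\sigma \in \{1,2\}$ the difference is $O(n_\sigma d^2)$ per place and is absorbed into the error term after summing, as you implicitly do.
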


\begin{proof}
  We have~$N=dn$, and by Lemma~\ref{lem:boundratevol}, $|\Co| \geq  \mu(\B(t)) / \mu(G/\order^1)$.
  Since~$\B(t)$ is a product of balls on each of the factors,
  Proposition~\ref{prop:volball},
  together with Corollary~\ref{cor:volPrasad} and the relation~$\Delta_A =
  N(\delta_A)^d\Delta_F^{d^2}$, gives the result.
\end{proof}

\subsection{Analysis of the code} \label{section:analysis-multiplicative}

Let us start with the existence of unramified towers of number fields with splitting conditions.

\begin{propo}\label{prop:goodnf}
  There exist an integer~$M_0$ and a real number~$C>0$ such that for all
  primes~$p$ satisfying~$\left(\frac{M_0}{p}\right)=1$, there exists a sequence
  of number fields~$(F_k)_k$ such
  that: $(i)$ $ [F_k\colon \Q]\to\infty$, $(ii)$  the prime $p$ splits totally
  in~$F_k/\Q$, and $(iii)$~$\rd_{F_k}\le C$.
\end{propo}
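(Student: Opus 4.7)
The plan is to build $(F_k)_k$ as a sequence of finite subextensions of an infinite unramified extension $F_\infty/F_0$ of a suitable base field $F_0=F_0(p)$ in which $p$ is totally split, and in which every prime of $F_0$ above $p$ continues to split completely in $F_\infty$. Since unramified extensions of number fields preserve the root discriminant, and since a totally split prime remains totally split along an everywhere-unramified $p$-decomposed tower, conditions (i)--(iii) all reduce to producing such an $F_0$ with $\rd_{F_0}\le C$ uniformly in $p$, together with an infinite such $F_\infty/F_0$.

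I would construct $F_0$ by first fixing, once and for all, an auxiliary prime $\ell$ and a base number field $K/\Q$ of controlled root discriminant that is ramified at enough rational primes so that its $\ell$-class group satisfies the Golod--Shafarevich inequality with quantitative slack; classical examples include the imaginary multiquadratic fields $\Q(\sqrt{-q_1},\dots,\sqrt{-q_t})$ of Shafarevich and Martinet. I would then take $F_0(p)=K\cdot L_p$, where $L_p$ is a small extension (for instance a quadratic field of the form $\Q(\sqrt{M_0})$, or a small cyclotomic subfield) chosen so that the Legendre symbol hypothesis $\left(\frac{M_0}{p}\right)=1$ forces $p$ to split totally in $L_p$ and, by suitably arranging the splitting of $p$ in $K$ via reciprocity, totally in $F_0(p)$. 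The integer $M_0$ and the constant $C$ are determined by this once-and-for-all choice, and the uniform bound $\rd_{F_0}\le C$ follows because $[L_p:\Q]$ and $\rd_{L_p}$ are bounded independently of $p$.

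For the infinite extension $F_\infty/F_0$, I would invoke the Hajir--Maire refinement of Golod--Shafarevich for $T$-decomposed, unramified pro-$\ell$ extensions: the maximal pro-$\ell$ extension of $F_0$ that is unramified at every place and in which every prime above $p$ splits completely is infinite as soon as a modified Golod--Shafarevich inequality is satisfied. The extra decomposition relations, one per prime of $F_0$ above $p$, contribute at most $[F_0:\Q]$ generators, a uniform bound absorbed by the slack built into $K$. Choosing $F_k$ to be a strictly increasing sequence of finite subextensions of this pro-$\ell$ tower yields a sequence of degree $[F_k:\Q]\to\infty$ in which $p$ splits totally and whose root discriminant equals $\rd_{F_0}\le C$.

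The main obstacle will be the uniform verification of the modified Golod--Shafarevich inequality over every $p$ in the arithmetic progression $\left(\frac{M_0}{p}\right)=1$: the number of added decomposition generators (bounded by $[F_0:\Q]$) must not destroy the inequality, which constrains the choice of $K$ to have enough auxiliary ramification while keeping $\rd_K$ small enough to yield a usable $C$. Once $K$, and hence $M_0$ and $C$, have been fixed with sufficient slack, the construction applies uniformly to every prime $p$ satisfying the Legendre condition.
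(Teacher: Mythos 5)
Your framework --- bounding the root discriminant via an everywhere-unramified tower in which the primes above~$p$ are completely split, and deducing infinitude from a Golod--Shafarevich criterion with decomposition conditions (the Hajir--Maire refinement) --- is exactly the one the paper delegates to \cite[Section~5]{Maire-Oggier}. But there is a concrete gap in your choice of base field. You take $F_0 = K \cdot L_p$ with $K$ a fixed \emph{multiquadratic} field (of degree~$2^t$, $t\ge 2$, so that its $2$-class rank is large) and $L_p = \Q(\sqrt{M_0})$, and you assert that the single hypothesis $\left(\frac{M_0}{p}\right)=1$, ``by reciprocity,'' forces~$p$ to split totally in~$F_0$. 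This cannot hold: the primes with $\left(\frac{M_0}{p}\right)=1$ have Dirichlet density~$1/2$, while the primes splitting totally in~$K$ have density~$2^{-t}<1/2$, so the former condition cannot imply the latter, let alone total splitting in $F_0 = K L_p$. There is nothing to ``arrange'' here --- $K$ is fixed once and for all and~$p$ ranges over a full arithmetic progression.

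The repair, and what the paper means by ``a tower above a quadratic field,'' is to merge the roles of~$K$ and~$L_p$: take $F_0 = \Q(\sqrt{M_0})$ itself with~$M_0$ squarefree (say negative) and having enough prime divisors that the $2$-class rank of~$F_0$ clears the Hajir--Maire decomposed Golod--Shafarevich threshold with slack absorbing the~$|T|=2$ decomposition conditions at the two primes of~$F_0$ above~$p$. Then $\left(\frac{M_0}{p}\right)=1$ is precisely the condition for~$p$ to split completely in~$F_0$, and $M_0$ and $C=\rd_{F_0}$ are fixed uniformly in~$p$. The rest of your argument --- invariance of the root discriminant and of total splitting along the $T$-split unramified $2$-tower, and the uniform bound on the number of added decomposition relations --- is correct and completes the proof once this fix is made.
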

\begin{proof}
  This follows from well-known methods in the study of towers of bounded root
  discriminant, using a tower above a quadratic field. The reader may refer to \cite[Section~5]{Maire-Oggier}.
\end{proof}

We now prove the main result of this work.

\begin{thm} \label{theorem:main}
  For all~$d\ge 2$, there exists a family of asymptotically good number field
  codes for the sum-rank distance,
  each obtained from the group of units of reduced norm~$1$ in a maximal order
  in a division algebra of degree~$d$,
  over a fixed
  alphabet~$\mat_d(\F_p)$, where~$\log p = c\log d + O(\log\log d)$
  and~$c>0$ is a constant.
\end{thm}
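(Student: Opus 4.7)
The plan is to combine Proposition~\ref{prop:goodnf} with the rate and distance bounds from Propositions~\ref{prop:sizecode} and~\ref{prop:distmin}, and then to balance the two free parameters $p$ (size of the alphabet) and $t$ (radius of the ball) to produce an asymptotically good family.

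First, I would choose the alphabet prime. Since the condition $(M_0/p)=1$ defines a union of arithmetic progressions of positive density, Dirichlet's theorem lets me pick a prime $p$ in it with $\log p = c\log d + O(\log\log d)$ for a constant $c$ to be fixed at the end. Applying Proposition~\ref{prop:goodnf} to this $p$ yields a tower $(F_k)_k$ with $n_k := [F_k\colon\Q] \to \infty$, $\rd_{F_k}\le C$, and $p$ totally split in each $F_k$. Taking $S_k$ to be the full set of $n_k$ primes above $p$, we obtain $N_k = dn_k$, $q = p^d$ and, via the residue isomorphisms $\Z_{F_k}/\p \cong \F_p$, the common alphabet $\mat_d(\F_p)$.

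Next, for each $k$ I construct a central division algebra $A_k$ of degree $d$ over $F_k$, unramified above $p$ (and, when $d=2$, at every real place), with reduced discriminant satisfying $\tfrac{1}{n_k}\log\N(\delta_{A_k}) = o(1)$. By the Albert--Brauer--Hasse--Noether classification it is enough to prescribe inverse local invariants $\pm 1/d$ at two finite primes $\mathfrak q_1, \mathfrak q_2$ of $F_k$ away from $p$: these invariants have order exactly $d$, forcing the resulting algebra to be a division algebra, and $\log\N(\delta_{A_k}) = (d-1)(\log\N(\mathfrak q_1) + \log\N(\mathfrak q_2))$. To keep this small I want $\mathfrak q_i$ of residue degree~$1$ in $F_k$ above rational primes of size $O(\mathrm{poly}(d))$; this can be arranged by erecting the tower of Proposition~\ref{prop:goodnf} above a fixed quadratic field in which two small auxiliary rational primes are totally split, ensuring a perpetual supply of residue-degree-one primes at every level.

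With these choices in place, I set $t := A\log d$ for a constant $A>0$, so that Proposition~\ref{prop:sizecode} gives
\[
  \frac{1}{N_k}\log|\Co_k| \;\ge\; \frac{dA}{200}\log d - O(d\log d) \;\ge\; \epsilon_1\,d\log d
\]
once $A$ is large enough; dividing by $\log q = d\log p$ yields a base-$q$ rate bounded below by $\epsilon_1/c > 0$. Choosing $c > 2A$ makes $\Theta|_{\aritgp\cap c\B(t)}$ injective via Corollary~\ref{coro:bound-t}, and Proposition~\ref{prop:distmin} delivers
\[
  \frac{d_R(\Co_k)}{N_k} \;\ge\; 1 - \frac{\log 2 + 2t}{\log p} \;\ge\; 1 - \frac{2A}{c} - o(1) \;\ge\; \epsilon_2 > 0,
\]
which proves asymptotic goodness for the sum-rank distance over the fixed alphabet $\mat_d(\F_p)$ with $\log p = c\log d + O(\log\log d)$ as required.

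The main obstacle is the construction of the algebras $A_k$ with controlled discriminant: the tower of Proposition~\ref{prop:goodnf} is not \emph{a priori} designed to carry many primes of small norm, so either one enriches the base of that tower as sketched above, or one invokes an effective Chebotarev density statement inside the Galois closure of $F_k/\Q$ to extract residue-degree-one primes above rational primes of polynomial size. Once this ramification budget is secured, all other steps reduce to arithmetic bookkeeping on top of the volume estimates of Section~\ref{section:Cartan}.
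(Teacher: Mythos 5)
Your overall strategy matches the paper's: tower from Proposition~\ref{prop:goodnf}, division algebra ramified only at finitely many primes away from $p$, the full set of primes above $p$ as $S$, then balance $t\sim A\log d$ against $\log p\sim c\log d$ via Propositions~\ref{prop:sizecode}, \ref{prop:distmin} and Corollary~\ref{coro:bound-t}. The one place you depart is the control of $\delta_A$, and there you manufacture an obstacle that the paper shows isn't there. You insist on $\frac{1}{n_k}\log\N(\delta_{A_k})=o(1)$, which forces you to hunt for residue-degree-one primes of polynomial size at every level of the tower -- a genuinely delicate requirement, since the tower of Proposition~\ref{prop:goodnf} controls the Frobenius only at $p$, and a fixed small rational prime will generically acquire large residue degree as $[F_k:\Q]\to\infty$. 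But look at the error budget: Proposition~\ref{prop:sizecode} already carries an $O(d)$ slack, and the term you are trying to kill is $\frac{1}{2n}\log\N(\delta_A)$. If you simply ramify $A$ at one prime $\p_2\mid 2$ and one prime $\p_3\mid 3$ (any residue degrees), then $\N(\delta_A)=\bigl(\N(\p_2)\N(\p_3)\bigr)^{d-1}\le 6^{n(d-1)}$, so $\frac{1}{2n}\log\N(\delta_A)\le\frac{(d-1)\log 6}{2}=O(d)$, which is already absorbed. The dominant competition is between $\frac{d}{200}t$ and $\frac{3d}{2}\log d$, both of size $\Theta(d\log d)$; a discriminant contribution of size $O(d)$ is irrelevant. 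With that substitution your ``main obstacle'' disappears entirely, and the rest of your bookkeeping (giving $A>300$, $c>2A$, hence $c=600$ admissible, with $\log p=c\log d+O(\log\log d)$ achievable by Dirichlet) is exactly the paper's proof.
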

\begin{proof}
  Let~$d\ge 2$. We pick a family of number fields~$F_k$ with~$\rd_{F_k}\le C$ as in
  Proposition~\ref{prop:goodnf}, leaving~$p\ge 5$ to be chosen later. Fix $F_k$
  and let $n=[F_k:\Q]$.
  We choose~$A$ a central division algebra of degree~$d$ over~$F_k$ ramified
  exactly at one prime~$\p_2$ above~$2$ and one prime~$\p_3$ above~$3$; by Class
  Field Theory such an algebra does exist (see for instance~\cite[Section 32,
  especially (32.13)]{Reiner} and the references therein).
  This implies~$\N(\delta_A)^{\frac{1}{nd}}\le 6$.
  We choose~$S$ to be the set of primes of~$F_k$ above~$p$, so that~$q_0=p$,
  $q=p^d$, $s=n=[\F_k:\Q]$ and~$N = nd$. Let  $\Co$ be the unit group code constructed from $A$.
  The quantity of Proposition~\ref{prop:sizecode}
  \[
    \frac{d}{200}t - \frac{d^2-1}{2d}\log\rd_F -
    \frac{1}{2n}\log \N(\delta_A) - \frac{3d}{2}\log d + O(d)
  \]
  can be written as
  \[
    \frac{d}{200}t - \frac{3d}{2} \log d + f(d) \text{ where }f(d) = O(d)\cdot
  \]
  We pick~$t\ge 1$ minimal such that~$\displaystyle{\frac{d}{200}t - \frac{3d}{2}\log d
  + f(d)\ge 1}$, so that~$t = 300\log d +
  O(1)$.
   By Proposition~\ref{prop:distmin} we have
  \[
    \frac{d_R(\Co)}{N} \geq 1 - \frac{\log 2}{\log p} - \frac{2t}{\log p}\cdot
  \]
  We pick~$p$ such that~$\displaystyle{\log p \ge \frac{1}{1-d^{-1}}(\log 2 + 2t)}$, so
  that~$\displaystyle{\frac{d_R(\Co)}{N}\ge \frac{1}{d}}$.
  By Proposition~\ref{prop:sizecode} and by the choice of $t$,  we have
  \[
    \frac{1}{N}\log|\Co| \ge 1.
  \]
Since any~$p\ge 5$ such that~$(\frac{M_0}{p})=1$ can be used, by the Dirichlet arithmetic
  progression theorem, there exists such~$p$ with~$\log p = 2t + O(\log t)$,
  i.e.~$\log p = 600\log d + O(\log\log d)$.
\end{proof}

\begin{rmk} The main contributions come from the volume of~$K$ and from
  the exponential growth rate of the volume of~$\B(t)$.
  We obtain~$c=600$ as an admissible value. We did not try to optimise this
  constant. It would be interesting to find families of unit group codes with a
  better asymptotic behaviour of~$\log p$ as~$d\to\infty$.
\end{rmk}

\subsection{Quaternion case}\label{sec:quaternion}

In this section, $A$ is a quaternion algebra, i.e.~$d=2$.
First, it is easy to give a closed formula for the volume of $\B(t)$.
\begin{propo} We have
  \[
    \mu(\B(t)) =
    2^{\frac{3}{2}u+\frac{5}{2}r+4r_2}\pi^{2r_1+3r_2}(\cosh(2t)-1)^{u}(\sinh(4t)-4t)^{r_2}.
  \]
\end{propo}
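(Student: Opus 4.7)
The plan is to exploit the product structure of $G = \prod_{\sigma\in\PP_\infty} \SL_{d_\sigma}(D_\sigma)$ and compute the local contributions separately. Since $\rho$ is defined componentwise and the measure on $G$ is the product measure, $\B(t)$ is itself a product of analogous local balls $\B_\sigma(t)$, so $\mu(\B(t)) = \prod_\sigma \mu_\sigma(\B_\sigma(t))$. There are three types of infinite places to handle: the $r$ ramified real places (where $D_\sigma = \Hamil$ and $d_\sigma = 1$), the $u$ unramified real places (where $D_\sigma = \R$ and $d_\sigma = 2$), and the $r_2$ complex places (where $D_\sigma = \C$ and $d_\sigma = 2$).

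The ramified real case is almost immediate: when $d_\sigma = 1$, the Cartan subalgebra is trivial so $\rho\equiv 0$ on $\SL_1(\Hamil)$, hence $\B_\sigma(t)$ is the whole group and Lemma \ref{lemm:volcentre} (applied with $d = 1$) gives $\mu_\sigma(\B_\sigma(t)) = 4\sqrt{2}\pi^2 = 2^{5/2}\pi^2$. Taking the product over the $r$ ramified places accounts for the factor $2^{5r/2}\pi^{2r}$.

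For the other two types I would apply the $KAK$-integration formula of Proposition \ref{prop:integralKAK} with $d=2$ and $f$ the indicator function of $\B_\sigma(t)$. The domain of the inner integral reduces to $\{0 < a_1 \le t\}$ (with $a_2 = -a_1$), and the product $\prod_{i<j}\sinh(a_i-a_j)^{ne^2}$ collapses to $\sinh(2a_1)^{ne^2}$, where $ne^2 = 1$ in the unramified real case and $ne^2 = 2$ in the complex case. The standard primitives
\[
\int_0^t \sinh(2a_1)\,da_1 = \frac{\cosh(2t)-1}{2}, \qquad \int_0^t \sinh(2a_1)^2\,da_1 = \frac{\sinh(4t)-4t}{8}
\]
produce the desired hyperbolic expressions. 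I would then plug in the explicit constants from Proposition \ref{prop:volK} (namely $\mu(\SO_2(\R)) = 2^{3/2}\pi$ and $\mu(\SU_2(\C)) = 16\pi^2$) and Lemma \ref{lemm:volcentre} (namely $\mu(\centre_K(A)) = 2$ in the real case and $4\pi$ in the complex case), together with the prefactor $(ne)^{(d-1)/2}\sqrt{d}$, to obtain $\mu_\sigma(\B_\sigma(t)) = 2^{3/2}\pi^2(\cosh(2t)-1)$ and $\mu_\sigma(\B_\sigma(t)) = 2^4\pi^3(\sinh(4t)-4t)$ respectively.

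Multiplying these three local contributions and using $r_1 = u+r$ to combine the powers of $\pi$ yields exactly the claimed formula. There is no substantial obstacle: everything is a direct specialisation of the tools built up in Section \ref{section:Cartan}, and the only care required is in bookkeeping the various powers of $2$ and $\pi$ arising from $\mu(K)$, $\mu(\centre_K(A))$, and the normalisation constant in Proposition \ref{prop:integralKAK}.
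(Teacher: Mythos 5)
Your proof is correct and follows essentially the same approach as the paper: decompose $\B(t)$ as a product over infinite places, handle the compact $\SL_1(\Hamil)$ factors directly, and apply the $KAK$-integration formula (Proposition~\ref{prop:integralKAK}) together with the explicit values of $\mu(K)$ and $\mu(\centre_K(A))$ for the $\SL_2(\R)$ and $\SL_2(\C)$ factors. Your version is in fact more detailed than the paper's, which states the key ingredients and concludes with ``putting these together gives the result''; your per-factor values $2^{3/2}\pi^2(\cosh(2t)-1)$, $2^{5/2}\pi^2$, and $2^4\pi^3(\sinh(4t)-4t)$ all check out and combine to the claimed formula.
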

\begin{proof}
  We compute the volume on each factor. 
  
  By Proposition~\ref{prop:volK}, we
  have~$\mu(\SO_2(\R)) = 2\sqrt{2}\pi$, $\mu(\SU_2(\C)) = 16\pi^2$
  and~$\mu(\SL_1(\Hamil)) = 4\sqrt{2}\pi^2$. In the~$\SL_2(\R)$ case, the integral to
  compute is
  \[
    \int_0^t\sinh(2a)da = \frac{1}{2}(\cosh(2t)-1).
  \]
  In the~$\SL_2(\C)$ case, the integral involved is
  \[
    \int_0^t\sinh(2a)^2da = \frac{1}{8}(\sinh(4t)-4t).
  \]
  Putting these together gives the result.
\end{proof}

In the quaternion case, Prasad's formula becomes:

\begin{propo}
  We have
  \[
    \mu(G/\order^1) = 2^{n/2} (\Delta_F)^{\frac{3}{2}}
    \zeta_F(2) \prod_{\p\mid\delta_A}(N(\p)-1).
  \]
\end{propo}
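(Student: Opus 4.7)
The plan is to specialise Proposition~\ref{prop:volPrasad} to the case~$d=2$ and simplify the resulting expression term by term. Since the general Prasad formula has already been established, no new volume computation or normalisation work is needed; everything reduces to bookkeeping on the arithmetic side.

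First, I would substitute~$d=2$ directly into the general formula. The prefactor becomes~$d^{n/2}=2^{n/2}$, and the product of zeta values degenerates to the single factor~$\zeta_F(2)$. The nontrivial simplification happens on the two remaining pieces, which I would handle together: the discriminant ratio and the Euler product~$\Phi$.

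For the discriminant ratio, I would use the relation~$\Delta_A=\Delta_F^{d^2}\N(\delta_A)^d$ recorded in Section~\ref{sec:csa}, which for~$d=2$ gives~$(\Delta_A/\Delta_F)^{1/2}=\Delta_F^{3/2}\N(\delta_A)$. For~$\Phi$, I would observe that in the inner product over~$0<i<d$ with~$e_\p\nmid i$, the only possible value is~$i=1$, and the condition~$e_\p\nmid 1$ forces~$e_\p>1$, i.e.~the prime~$\p$ must be ramified in~$A$. Hence
\[
  \Phi=\prod_{\p\mid\delta_A}\bigl(1-\N(\p)^{-1}\bigr).
\]
At a ramified prime of a quaternion algebra we have~$e_\p=2$ and~$d_\p=1$, so the local factor of~$\delta_A$ is~$\p^{2-1}=\p$; unramified primes contribute trivially. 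Thus~$\N(\delta_A)=\prod_{\p\mid\delta_A}\N(\p)$, and combining with~$\Phi$ yields
\[
  \N(\delta_A)\cdot \Phi=\prod_{\p\mid\delta_A}\N(\p)\bigl(1-\N(\p)^{-1}\bigr)=\prod_{\p\mid\delta_A}\bigl(\N(\p)-1\bigr).
\]

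Assembling the four pieces gives the claimed closed form. There is no real obstacle here beyond careful unwinding of definitions; the only place where a minor check is needed is verifying that the local factor of~$\delta_A$ at a quaternionic ramified prime is exactly~$\p$ (which follows immediately from~$d(1-1/e_\p)=2\cdot(1-1/2)=1$), so that the product~$\N(\delta_A)\cdot\Phi$ collapses neatly to the product of~$\N(\p)-1$ over ramified primes.
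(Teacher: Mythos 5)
Your proof is correct and is exactly the intended specialisation of Proposition~\ref{prop:volPrasad} to~$d=2$; the paper leaves this step implicit, and your careful unwinding of the discriminant relation, the degeneration of the~$\zeta_F$ product, and the collapse of~$\Phi$ to the product over ramified primes (using~$d(1-1/e_\p)=1$) fills it in precisely.
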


\medskip

To conclude this section, let us give an explicit example.
  Let~$F = \Q(\cos(2\pi/11),\sqrt{2},\sqrt{-23})$ (from~\cite{Martinet}). The $2$-class group of~$F$ is
  isomorphic to~$(\Z/2\Z)^9$. Let~$F_1$ be the $2$-Hilbert class field of~$F$.
  Let $p$ be a prime number  such that  every prime ideal above $p$ in~$F$
  splits completely in~$F_1/F$; take a such prime ideal $\p$.
   Then there exists an unramified infinite extension~$L/F$
  such that~$\p$ splits completely in~$L$ (see \cite[Example 9]{Maire-Oggier}). Let~$F_1\subset F_k\subset L$ be an
  intermediate field, of degree~$n$ over $\Q$. 
  By construction there exists~$n/20$ primes of~$F_k$ above~$\p$ of residue degree~$f_p$, where $f_pg_p=20$.
  Let~$A$ be the quaternion
  algebra over $F_k$ ramified at exactly two of these primes, and let~$S$ be the set of the
  remaining ones, which has size~$\displaystyle{s = \frac{n}{20}-2}$; $N=ds$. Let~$\Co$ be the unit
  group code constructed from~$A$. Then
  \[
    \frac{\mu(\B(t))}{\mu(G/\order^1)} =
    \frac{\big(2^{2}\pi^{3/2}(\sinh(4t)-4t)^{1/2}\big)^{n}}{2^{n/2}\rd_{F_k}^{3n/2}\zeta_{F_k}(2)(p^{f_p}-1)^2},
    \]
    in particular
    \[ \Big(\frac{\mu(\B(t))}{\mu(G/\order^1)}\Big)^{1/n}
    \ge 
    \frac{(2\pi)^{3/2}(\sinh(4t)-4t)^{1/2}}{\rd_F^{3/2}\zeta_{F}(2)^{1/20}(p^{f_p}-1)^{2/n}}\cdot
  \]
  We have~$\rd_F \approx 92.37$ and~$\zeta_F(2)\approx 1.02$.
  For~$t = 2.2$ we have
  \[
    \frac{(2\pi)^{3/2}(\sinh(4t)-4t)^{1/2}}{\rd_F^{3/2}\zeta_{F}(2)^{1/20}}>1,
  \]
  so this is an admissible value for an asymptotically good code. From the
  minimal distance formula, we can choose any~$p$ such that~$q_0=p^{f_p}$ satisfies
  \[
    \log q_0 > \frac{n}{s}(\log 2 +2t).
  \]
  Asymptotically we can take any~$p$ such that~$\displaystyle{\frac{1}{20}\log q_0 > 5.09}$,
  i.e.~$p^{f_p/20} \geq 163$ and therefore~$q = p^{df_p} \geq 163^{40}$.

\section{Complement: the additive construction}\label{sec:additiveconstr}

\subsection{The construction (following \cite{Maire-Oggier})}
Let us recall the additive construction. 
Let $F$ be a number field of degree $n$ over $\Q$, and 
let $A$ be a central division algebra of degree~$d$ over~$F$.
 Consider the locally compact group $G=\prod_{\sigma \in \PP_\infty} \mat_{d_\sigma}(D_\sigma)$, equipped with
\begin{enumerate}[$\bullet$]
\item the Euclidean norm $\displaystyle{\T_2(g)=\sum_{\sigma \in \PP_\infty}
  n_\sigma \|\sigma(g)\|_2^2}$, where $\|(m_{i,j})\|_2=\sqrt{\sum_{i,j} e_\sigma |m_{i,j}|^2}$;
\item the Lebesque measure $d\mu$ relative to an orthonormal basis of~$G$ with
  respect to~$\T_2$.
\end{enumerate}

\smallskip

Let~$\G=A$ be the algebraic additive group, and~$\aritgp = \order$, where~$\order$ is a maximal order of~$A$. 

\smallskip
As for the multiplicative group code, let~$S$ be a
finite set of ideal primes of~$\Z_F$ that are unramified in~$\alg$  and such
that for all~$\p\in S$, the residue field~$\Z_F/\p$ is isomorphic to a common
finite field~$\F_{q_0}$. For all~$\p\in S$, we fix an isomorphism~$\iota_\p\colon \order/\p\order \cong
\mat_d(\F_{q_0})$.

\smallskip

 Let~$\Alpha = \mat_d(\F_{q_0})$ so that~$q = q_0^d$, $s=|S|$ and~$N = ds$, and
define~$\Theta\colon \aritgp \to \Alpha^s$ to be the map sending~$\gamma\in\aritgp$
to the word formed by the~$\iota_\p(\gamma)$ for~$\p\in S$.

\smallskip

Take the ball $\B(t)=\{(x_\sigma)_\sigma \in \G \mid \n{x_\sigma} \leq t\}$.
Let~$\Co$ be the code attached to~$(\B(t),\aritgp,\Theta)$ as in
Section~\ref{sec:construction}.

\begin{propo}\label{prop:minidistanceadditive}
We have 
\[
  d_R(\Co) \geq N- d^2n \frac{\log(2t)}{\log q} +\frac{d^2 n}{2}\cdot \frac{\log
  d}{\log q}.
\]
\end{propo}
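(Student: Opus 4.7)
The plan is to adapt the argument of Proposition~\ref{prop:distmin} to the additive setting, replacing Corollary~\ref{cor:nrdbound} by a Hadamard-type inequality appropriate to the norm~$\T_2$.

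First, I would pick $x \neq y$ in $\aritgp \cap c\B(t)$ realising $d_R(\Co)$ and set $z = x - y \in \order$. Because each $\iota_\p$ is an additive map we have $\iota_\p(z) = \iota_\p(x) - \iota_\p(y)$, so letting $r_\p = \rk(\iota_\p(z))$ we get $\sum_{\p \in S} r_\p = d_R(\Co)$. Since $\alg$ is a division algebra, $z \neq 0$ forces $\N(z) \neq 0$. Lemma~\ref{lem:colnorm} together with the natural surjection $\order/z\order \twoheadrightarrow \prod_{\p \in S} \order/(\p\order + z\order)$ then yields
\[
  \N(z) = |\order/z\order| \geq \prod_{\p \in S} q_0^{d(d-r_\p)} = q^{N - d_R(\Co)},
\]
exactly as in the multiplicative case.

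The new ingredient is an upper bound on $\N(z)$ in terms of $t$. Since $x$ and $y$ both lie in the additive coset $c + \B(t)$, the triangle inequality for $\|\cdot\|_2$ gives $\|\sigma(z)\|_2 \leq 2t$ at every infinite place $\sigma$. I would then establish the Hadamard-type inequality
\[
  |\sigma(\nrd(z))| \leq \frac{\|\sigma(z)\|_2^d}{d^{d/2}}
\]
by embedding $\mat_{d_\sigma}(D_\sigma)$ into $\mat_d(\C)$ through scalar extension from $F_\sigma$ to $\C$; under this embedding the reduced norm becomes the usual determinant. The crucial point is that the factor $e_\sigma$ built into the definition of $\|\cdot\|_2$ is precisely what makes $\|\sigma(z)\|_2$ coincide with the Frobenius norm of the complex image: in the quaternionic case each coefficient $q = a + bi + cj + dk$ maps to a $2 \times 2$ complex matrix whose Frobenius norm squared equals $2(a^2+b^2+c^2+d^2) = 2\nrd(q)$, matching $e_\sigma = 2$; the real and complex cases are even more direct. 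Once this normalisation is matched, classical Hadamard delivers the bound.

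Finally, combining the two estimates and using $\N(z) = \prod_\sigma |\sigma(\nrd(z))|^{n_\sigma d}$ with $\sum_\sigma n_\sigma = n$ gives
\[
  q^{N - d_R(\Co)} \leq \N(z) \leq \prod_\sigma \left(\frac{(2t)^d}{d^{d/2}}\right)^{n_\sigma d} = \frac{(2t)^{nd^2}}{d^{nd^2/2}},
\]
and taking $\log_q$ rearranges into the stated inequality. The only substantive step is the Hadamard normalisation check; everything else follows the template of Proposition~\ref{prop:distmin} essentially verbatim.
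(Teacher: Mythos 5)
Your proof is correct and takes essentially the same route as the paper's. The paper cites the inequality $|N_{F/\Q}\nrd(x)|^d \le \left(\frac{\T_2(x)}{dn}\right)^{d^2n/2}$ from Page's thesis, whereas you derive the equivalent per-place Hadamard bound $|\sigma(\nrd(z))| \le \|\sigma(z)\|_2^d / d^{d/2}$ directly by extending scalars to $\C$, correctly observing that the $e_\sigma$ factor in the definition of $\|\cdot\|_2$ is exactly what makes the paper's norm coincide with the Frobenius norm of the complex image (for instance, a quaternion $a+bi+cj+dk$ maps to a $2\times 2$ complex block of Frobenius norm squared $2(a^2+b^2+c^2+d^2)$). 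Multiplying your per-place bounds over the infinite places and using $\sum_\sigma n_\sigma = n$ recovers the paper's cited inequality, so the two arguments are logically equivalent; yours is self-contained, which is a small gain in exposition at the cost of a normalisation check.
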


\begin{proof}
We follow the proof of Proposition \ref{prop:distmin}. 
  Let~$x \neq y$ in $\aritgp\cap (c+\B(t))$ be such that
  $d_R(\Theta(x),\Theta(y))=d_R(\Co)$,
  where~$c\in G$ is as in Section~\ref{sec:construction}.
  Let~$z=x-y\in\order$; the element $z$ is nonzero and therefore~$\N(z)\neq 0$.
  As for the multiplicative case, we obtain
  $\displaystyle{\N(z) \ge  q^{N-d_R(\Co)}}$.
  On the other hand, let us write $x=c+x_0$ and $y=c+ y_0$, with $x_0,y_0 \in
  \B(t)$. As $\n{\sigma(x-y)}  \leq 2t$, and $\T_2(z) \leq n (2t)^2$, then we obtain 
  \[
    \N(z) = \prod_{\sigma : F \hookrightarrow \C}|\sigma(\nrd(z))|^d 
   \le \left(\frac{2t}{d^{1/2}}\right)^{d^2n},
  \]
  thanks to the estimate (see \cite[Chapter 2, \S3]{Page}): $|N_{F/\Q} \nrd(x)|^d \leq \left(\frac{\T_2(x)}{dn}\right)^{d^2n/2}$.
  Taking logarithms and dividing by~$\log q$ gives the result.
\end{proof}

Concerning the number of codewords, we have:
 \begin{propo} \label{prop:volumecodeadditif} Suppose that $\Theta|_{\aritgp\cap
   (c+\B(t))}$ is injective. Then we have
 \[
 |\Co| \geq \frac{2^{r_2d^2} \ \V_{d^2}^{r_1} \ \V_{2d^2}^{r_2} \ t^{d^2n}}{\sqrt{\disc_A}},
 \]
 where $\V_n$ denotes the volume of the unit  ball of the space $\R^n$ equipped with the Lebesgue volume form.
\end{propo}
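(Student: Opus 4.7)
The strategy is to apply Lemma~\ref{lem:boundratevol}: under the injectivity hypothesis we have $|\Co| \ge \mu(\B(t))/\mu(G/\order)$, so the task reduces to computing $\mu(\B(t))$ exactly and identifying $\mu(G/\order)$ with $\sqrt{\disc_A}$.

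First I would compute $\mu(\B(t))$ as a product over infinite places, since the ball factors as $\B(t) = \prod_\sigma \{x_\sigma : \|\sigma(x_\sigma)\|_2 \le t\}$ and the Lebesgue measure on $G$ factors as the product of Lebesgue measures on each $V_\sigma := \mat_{d_\sigma}(D_\sigma)$ relative to an orthonormal basis for the inner product $n_\sigma\|\cdot\|_2^2$. On each factor I expand in real coordinates and perform a routine linear change of variables. In the real unramified case one has $e_\sigma = n_\sigma = 1$, real dimension $d^2$, and the ball is the standard Euclidean ball of radius $t$, of volume $\V_{d^2} t^{d^2}$. In the real ramified case one has $e_\sigma = 2$, $n_\sigma = 1$, real dimension $d^2$; the factor $e_\sigma$ inside $\|\cdot\|_2^2$ rescales the ball by $1/\sqrt{2}$ while the Gram determinant contributes $2^{d^2/2}$, and the two effects cancel to again give $\V_{d^2} t^{d^2}$. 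In the complex case one has $e_\sigma = 1$, $n_\sigma = 2$, real dimension $2d^2$; the weight $n_\sigma$ contributes a Jacobian of $2^{d^2}$ and the ball itself is Euclidean of radius $t$, yielding $2^{d^2}\V_{2d^2}t^{2d^2}$. Taking the product over all $\sigma \in \PP_\infty$ gives exactly $2^{r_2d^2}\,\V_{d^2}^{r_1}\,\V_{2d^2}^{r_2}\,t^{d^2 n}$, using $n = r_1 + 2r_2$.

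For the denominator I would invoke the standard formula that the covolume of a lattice in a Euclidean space equals the square root of the Gram determinant of any $\Z$-basis. Applied to $\order \subset G$ with the quadratic form $\T_2$, this gives $\mu(G/\order) = \sqrt{\det(\T_2(\alpha_i, \alpha_j))_{i,j}}$ for any $\Z$-basis $(\alpha_i)$ of $\order$. The identification of this Gram determinant with $\disc_A = \disc_F^{d^2}\N(\delta_A)^d$ is classical: it generalises the number-field identity $\mu((F\otimes_\Q\R)/\Z_F) = \sqrt{\disc_F}$ and is obtained by writing $\T_2$ as a sum of traces weighted by the $n_\sigma$, recognising the resulting form on $A$ as the trace form associated with the pairing $(x,y)\mapsto \trd_{A/F}(x\bar y^*)$, and applying the tower formula together with the fact that $\delta_A$ is precisely the reduced discriminant of $\order$ as a $\Z_F$-order. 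Combining this with the computation of $\mu(\B(t))$ and Lemma~\ref{lem:boundratevol} gives the claimed inequality.

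The only step requiring any care is the bookkeeping of the two constants $e_\sigma$ and $n_\sigma$ in the three local cases, where one must verify that the real ramified and real unramified places contribute the same factor $\V_{d^2}t^{d^2}$ so that they combine into $\V_{d^2}^{r_1}$. The identification of the Gram determinant with $\disc_A$ is standard but is the step that leans most on external background; for the present proposition it can be treated as a black box, since the reference \cite[Chap.~2]{Page} used earlier in the paper develops exactly this identity.
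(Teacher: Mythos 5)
Your proposal is correct and follows the same route as the paper: compute $\mu(\B(t))$ by factoring over infinite places, identify $\mu(G/\order)$ with $\sqrt{\disc_A}$ via the Gram determinant of a $\Z$-basis of $\order$, and apply Lemma~\ref{lem:boundratevol}. The paper simply states the two volume identities without derivation (with a footnote correcting the factor $2^{r_2 d^2}$ that you correctly derive from the $n_\sigma=2$ Jacobian at complex places), so your write-up is a filled-in version of the same argument.
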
 
 
\begin{proof}
  Here $\mu(\B(t))= 2^{r_2d^2}\V_{d^2}^{r_1} \V_{2d^2}^{r_2} t^{d^2n}$, and
  $\mu(G/\order)= \sqrt{\disc_A}$ \footnote[2]{A factor~$2^{-r_2d^2}$
  is missing in~\cite[Proposition 9]{Maire-Oggier}, but this does not affect the results of the
  paper.}.
  Then apply Lemma~\ref{lem:boundratevol}.
\end{proof}

\begin{cor}\label{cor:volumeadditif}
  Suppose that $\Theta|_{\aritgp\cap (c+\B(t))}$ is injective. Then we have
  \[
    \log |\Co| \geq nd^2\log t - \frac{1}{2}\log\disc_A - nd^2\log d + O(nd^2).
  \]
\end{cor}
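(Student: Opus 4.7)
The plan is to take logarithms directly in the bound of Proposition~\ref{prop:volumecodeadditif}, and then estimate the volumes of the Euclidean balls using Stirling's formula. Concretely, taking logs gives
\[
  \log|\Co| \ge r_1\log\V_{d^2} + r_2\log\V_{2d^2} + r_2 d^2\log 2 + nd^2\log t - \tfrac{1}{2}\log\disc_A,
\]
so all that remains is to control the volume terms.

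Recall that $\V_m = \pi^{m/2}/\Gamma(m/2+1)$. Applying Stirling in the form $\log\Gamma(x+1) = x\log x - x + O(\log x)$, I get
\[
  \log\V_m = \tfrac{m}{2}\log\pi - \tfrac{m}{2}\log\tfrac{m}{2} + \tfrac{m}{2} + O(\log m) = -\tfrac{m}{2}\log m + O(m).
\]
Specialising to $m = d^2$ and $m = 2d^2$ yields $\log\V_{d^2} = -d^2\log d + O(d^2)$ and $\log\V_{2d^2} = -2d^2\log d + O(d^2)$.

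Plugging back and using the fundamental relation $n = r_1 + 2r_2$, the leading-order terms assemble as
\[
  r_1\log\V_{d^2} + r_2\log\V_{2d^2} = -(r_1 + 2r_2)d^2\log d + O(nd^2) = -nd^2\log d + O(nd^2),
\]
while the factor $r_2 d^2\log 2$ is absorbed in $O(nd^2)$. Combining everything gives the announced bound. There is no real obstacle here; the only thing to be mildly careful about is that the error term in Stirling must be tracked at each fixed $m$ large enough to be valid, and that the implied constants in $O(nd^2)$ absorb contributions from $r_1, r_2 \le n$ uniformly in $d$, which is clear.
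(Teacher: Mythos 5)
Your proof is correct and follows exactly the same route as the paper, which simply invokes the Stirling estimate $\log\V_m = -\frac{m}{2}\log m + O(m)$ and plugs into Proposition~\ref{prop:volumecodeadditif}. The only difference is that you spell out the application of Stirling and the bookkeeping with $n = r_1 + 2r_2$, which the paper leaves implicit.
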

\begin{proof}
  Use~$\V_n = -\frac{n}{2}\log n + O(n)$.
\end{proof}

\subsection{Asymptotic analysis}

We now follow the case of the multiplicative group of Section~\ref{section:analysis-multiplicative} to obtain the following

\begin{thm} \label{theorem:asymptotic-additif} 
  For all~$d\ge 2$, there exists a family of asymptotically good number field
  codes for the sum-rank distance, each obtained from the additive group of a
  maximal order in a division algebra of degree~$d$, over a fixed
  alphabet~$\mat_d(\F_p)$, where~$\log p = \frac{1}{2}\log d + O(\log\log d)$.
\end{thm}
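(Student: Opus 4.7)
The plan is to mirror the proof of Theorem~\ref{theorem:main}: select a tower of number fields with bounded root discriminant in which a chosen rational prime $p$ splits completely, take a central division algebra $A$ of degree $d$ ramified at only a few small primes so that $\N(\delta_A)^{1/(nd)}$ is bounded by an absolute constant, and then tune $t$ and $p$ so that the rate bound of Corollary~\ref{cor:volumeadditif} and the distance bound of Proposition~\ref{prop:minidistanceadditive} simultaneously yield positive constants.

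More precisely, I would use Proposition~\ref{prop:goodnf} to produce number fields $F_k$ with $n=[F_k:\Q]\to\infty$ and $\rd_{F_k}\le C$, in which a prime $p$ (to be chosen later subject to $(M_0/p)=1$) splits totally. By class field theory I can choose $A$ ramified at exactly one prime above~$2$ and one above~$3$, so that $\N(\delta_A)^{1/(nd)} \le 6$ and hence $\tfrac{1}{2n}\log\N(\delta_A) = O(d)$. Letting $S$ consist of the primes of $F_k$ above $p$ gives $|S|=n$, $q_0=p$, $q=p^d$, $N=nd$, and using $\disc_A=\disc_F^{d^2}\N(\delta_A)^d$, Corollary~\ref{cor:volumeadditif} simplifies to
\[
  \frac{1}{N}\log|\Co| \ge d\log(t/d) + O(d).
\]
I then take the smallest $t\ge 1$ making the right-hand side at least~$1$, which forces $t=\Theta(d)$ and in particular $\log(2t)=\log d+O(1)$.

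For the distance, Proposition~\ref{prop:minidistanceadditive} (using $\log q = d\log p$) rewrites as
\[
  \frac{d_R(\Co)}{N} \ge 1 - \frac{\log(2t) - \tfrac{1}{2}\log d}{\log p} = 1 - \frac{\tfrac{1}{2}\log d + O(1)}{\log p}.
\]
Demanding this to be at least $1/d$ reduces to $\log p \ge \tfrac{1}{2}\log d + O(1)$, and Dirichlet's theorem on primes in arithmetic progressions, invoked exactly as in the proof of Theorem~\ref{theorem:main}, yields a prime $p$ with $(M_0/p)=1$ and $\log p = \tfrac{1}{2}\log d + O(\log\log d)$. Injectivity of $\Theta|_{\aritgp \cap (c+\B(t))}$, required by Lemma~\ref{lem:boundratevol}, follows from the same comparison $\N(z)\ge q^N$ versus $\N(z) \le (2t/\sqrt{d})^{d^2 n}$ that underlies Proposition~\ref{prop:minidistanceadditive}, and is guaranteed by our choice of $p$. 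I do not expect any serious conceptual obstacle here; the additive case is in fact easier than the multiplicative one, and the main thing to verify carefully is that the polynomial volume growth $\mu(\B(t))\propto t^{d^2 n}$ is precisely compensated by the appearance of $\log(2t)$ rather than $2t$ in the distance bound, accounting for the reduced leading constant $\tfrac{1}{2}$ (compared to $600$) in the asymptotic for $\log p$.
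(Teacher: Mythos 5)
Your proposal reproduces the paper's proof essentially verbatim: same choice of tower via Proposition~\ref{prop:goodnf}, same algebra ramified only above~$2$ and~$3$ so that $\N(\delta_A)^{1/(nd)}$ is bounded, same selection of $t$ from Corollary~\ref{cor:volumeadditif} giving $\log t=\log d+O(1)$, and the same reduction of the distance bound of Proposition~\ref{prop:minidistanceadditive} to $\log p\ge\tfrac12\log d+O(1)$, closed by Dirichlet. Your explicit remark that injectivity of $\Theta$ on $\aritgp\cap(c+\B(t))$ follows from the comparison $\N(z)\ge q^N$ versus $\N(z)\le(2t/\sqrt d)^{d^2n}$ is a small but welcome clarification that the paper leaves implicit in the additive case (it has the explicit Corollary~\ref{coro:bound-t} only in the multiplicative setting).
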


\begin{proof}
Let~$d\ge 2$. As in the proof of Theorem \ref{theorem:main}, we pick a family of
  number fields~$F_k$, a prime number $p$, a central division algebra~$A$
  ramified exactly at one prime $\p_2$ above~$2$ and at one prime $\p_3$
  above~$3$, and a set $S$ of primes, so that~$q_0=p$,
  $q=p^d$, $s=n=[\F_k:\Q]$ and~$N = nd$. Let  $\Co$ be the additive group code
  constructed from $A$ and $S$.
  Then by Corollary~\ref{cor:volumeadditif} we have
  \[
    \frac{1}{N} \log |\Co|
    \ge d\log t - \frac{1}{2nd}\log\disc_A - d\log d + O(d)
    = d\log t - d\log d + f(d) \text{ where }f(d) = O(d).
  \]
  We pick~$t\ge 1$ minimal such that~$ -d \log d +  d \log t + f(d) \geq 1$, so that 
  \[
    \log t = \log d + O(1).
\]
   By Proposition~\ref{prop:minidistanceadditive} we have
  \[
    \frac{d_R(\Co)}{N} \geq 1 - \frac{\log 2t }{\log p} + \frac{\log d }{2\log p}\cdot
  \]
 We pick~$p$ such that~$\displaystyle{\log p \ge \frac{1}{1-d^{-1}}\left(\frac{1}{2}\log d +O(1) \right)}$, so
  that~$\displaystyle{\frac{d_R(\Co)}{N}\ge \frac{1}{d}}$. 
  By Proposition~\ref{prop:volumecodeadditif} (and by the choice of $t$),  we have $\displaystyle{\frac{1}{N}\log|\Co| \ge 1}$.
  As before, by the Dirichlet arithmetic
  progression theorem, there exists such~$p$ with~$\log p =\frac{1}{2}\log d + O (\log \log d) $.
\end{proof}

\subsection{Codes over finite fields}

In this section, we explain how to construct codes from quaternion orders, with
alphabet naturally given as a finite field. The difference with the previous
constructions is that the primes in~$S$ need to ramify in~$A$.

\begin{thm} \label{theorem:small-alphabet-quaternion}
  Let  $M_0$ and $C$ as in Proposition~\ref{prop:goodnf}, and let
  $\alpha=-\frac{1}{4} \log \frac{\pi^4}{24}$.
  For all prime~$p$ such that~$\left(\frac{M_0}{p}\right)=1$ and~$\log p > \log
  C + \alpha$, there exists a family of asymptotically good codes over
  $\F_{p^2}$, each obtained from the additive group of a maximal order in a
  quaternion algebra.
\end{thm}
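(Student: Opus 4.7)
The approach is to adapt the additive construction of Section~\ref{sec:additiveconstr}, this time requiring the primes in $S$ to \emph{ramify} in the quaternion algebra. At a ramified prime $\p$ of a quaternion $F$-algebra with $N(\p)=p$, we have $\order/\Prm \cong \F_{p^2}$ canonically; so the code lives directly in $\F_{p^2}^s$ with $N=s$ and $q=p^2$, without any arbitrary embedding into a matrix ring.

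First, I would apply Proposition~\ref{prop:goodnf} with an imaginary quadratic base field, producing a tower $(F_k)_k$ of totally complex number fields with $\rd_{F_k}\le C$, $n_k=[F_k:\Q]\to\infty$ and $p$ totally split in $F_k/\Q$. Let $S_k$ consist of all $n_k$ primes of $F_k$ above $p$, and by Hasse--Brauer--Noether choose a quaternion algebra $A_k/F_k$ ramified exactly at $S_k$, adding one auxiliary finite ramified prime when $n_k$ is odd to ensure the parity condition (this contributes only a bounded additive term to $\log\sqrt{\disc_{A_k}}$). Since at least one finite prime ramifies, $A_k$ is automatically a division algebra; since $F_k$ has no real places, $A_k$ is unramified at infinity. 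Fix a maximal order $\order_k\subset A_k$ and the canonical identifications $\iota_\p\colon \order_k/\Prm_\p\cong \F_{p^2}$; define $\Theta_k(x) = (\iota_\p(x))_{\p\in S_k}$ and set $\Co_k = \Theta_k(\order_k\cap (c+\B(t)))$ for $\B(t)$ as in Section~\ref{sec:additiveconstr} and a parameter $t$ to be optimised.

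For the minimum Hamming distance I would follow Proposition~\ref{prop:minidistanceadditive}, with one new arithmetic ingredient: at each ramified prime, $\Prm_\p^2 = \p\order_\p$ forces $\nrd(\Prm_\p)\subseteq \p$. Hence if $x,y\in\order_k\cap(c+\B(t))$ have Hamming distance $d_H$, the nonzero difference $z=x-y$ has reduced norm divisible by at least $n_k-d_H$ distinct primes of residue characteristic $p$, giving $\N(z)=|N_{F_k/\Q}(\nrd(z))|^2 \ge p^{2(n_k - d_H)}$. Combined with the Minkowski-type upper bound $\N(z)\le (2t^2)^{2n_k}$ coming from $\T_2(z)\le 4n_k t^2$, this yields $d_H/n_k\ge 1 - \log(2t^2)/\log p$. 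Injectivity of $\Theta_k$ on $\order_k\cap(c+\B(t))$ follows from the same inequality once $d_H\ge 1$.

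For the rate, Proposition~\ref{prop:volumecodeadditif} in the totally complex regime with $r_2=n_k/2$ gives $\mu(\B(t)) = 2^{2n_k}\V_8^{n_k/2}t^{4n_k}$ and $\sqrt{\disc_{A_k}} = \rd_{F_k}^{2n_k}\,p^{n_k}$ up to a bounded factor, hence
\[
  \log|\Co_k| \geq n_k\bigl(2\log 2 + \tfrac{1}{2}\log \V_8 + 4\log t - 2\log\rd_{F_k} - \log p\bigr) + O(1).
\]
It remains to balance the two inequalities. Setting $4\log t$ just below the distance-imposed ceiling of $2\log p - 2\log 2$ and feeding this into the rate bound, one checks that both $\log_{p^2}|\Co_k|/N_k$ and $d_H/N_k$ remain uniformly positive precisely when $\log p$ exceeds $\log\rd_{F_k} + \alpha$, where $\alpha = -\tfrac14\log \V_8 = -\tfrac14\log(\pi^4/24)$; since $\rd_{F_k}\le C$, this is guaranteed by the hypothesis $\log p > \log C + \alpha$. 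The main obstacle is precisely this bookkeeping: the explicit form $\alpha = -\tfrac14\log(\pi^4/24)$ comes from the $8$-dimensional ball volume $\V_8$ contributed by each complex place of $F_k$, and one has to track the $2^{2n_k}$ prefactor and the contribution of the ramified primes to $\disc_{A_k}$ carefully to see that no further volume factor enters the final threshold.
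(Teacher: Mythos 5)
Your approach is essentially the same as the paper's: ramify the quaternion algebra at the primes of $S$ above $p$, use the residue field $\order/\Prm\cong\F_{p^2}$ as the alphabet directly, and note that $z\in\Prm_\p$ forces $\nrd(z)\in\p$, which is exactly the replacement for Lemma~\ref{lem:colnorm} needed to make Proposition~\ref{prop:minidistanceadditive} go through. Restricting to a totally imaginary tower and specialising directly to $d=e=2$ rather than carrying general $(d,e,f)$ and then setting $e<3$, as the paper does, is a legitimate simplification; and since a totally imaginary tower necessarily has $n_k$ even, the auxiliary ramified prime you introduce for the parity constraint is in fact never needed (and as described it would be problematic, because its residue degree — and hence its contribution to $\log\disc_A$ — is not a priori bounded along the tower; dropping one prime of $S$ would have been the safe fix).

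The gap is in the final bookkeeping. Your rate lower bound
\[
  \log|\Co_k| \geq n_k\bigl(2\log 2 + \tfrac{1}{2}\log \V_8 + 4\log t - 2\log\rd_{F_k} - \log p\bigr) + O(1)
\]
is correct, and so is the distance-imposed ceiling $4\log t < 2\log p - 2\log 2$. But substituting the ceiling into the rate bound leaves
\[
  \tfrac{1}{2}\log\V_8 + \log p - 2\log\rd_{F_k} > 0,
\]
that is $\log p > 2\log\rd_{F_k} - \tfrac12\log\V_8 = 2\bigl(\log\rd_{F_k} + \alpha\bigr)$ with your $\alpha=-\tfrac14\log\V_8$. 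This does not simplify to the threshold $\log p > \log\rd_{F_k} + \alpha$ that you state: your own inequalities yield a coefficient $2$ on $\log\rd_F$ and $-\tfrac12\log\V_8$, not $\log\rd_F$ and $-\tfrac14\log\V_8$. (The paper's brief proof does not rederive the explicit constant either, so this is worth double-checking rather than taking the stated $\alpha$ on faith.) The qualitative conclusion — existence of asymptotically good codes over $\F_{p^2}$ for $p$ large enough relative to $C$ — is unaffected, but the quantitative threshold you assert is not what your computation gives.
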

\begin{proof}
  Let~$F_k$ be as in Proposition~\ref{prop:goodnf}.
  Let~$p$ be a prime number that splits totally in~$F_k/\Q$, and let $S$ be a
  maximal subset of primes of $F_k$ above $p$ such that~$|S|$ is even.

  Let~$A$  be a central division algebra of degree~$d$ over $F_k$  ramified exactly at each
  prime ideal~$\p$ of~$S$ and with common ramification index $e>1$.
  Consider the additive codes as in the previous section.
  Writing $d=ef$ and~$\Prm$ the maximal two-sided ideal of~$\order$ above~$\p$,
  then there is an isomorphism~$\iota_\p \colon \order/\Prm \cong \mat_f(\F_{p^e})$, and we use
  columns $\F_{p^e}^f$ as codewords via~$\iota_\p$. We have~$N=f\cdot|S|\ge f\cdot ([F_k:\Q]-1)$, and  $q=p^d$ as
  before. The only difference with
  the unramified case concerns the quantity  $(\disc_A)^{1/([F_k:\Q]d)}$ which
  is not bounded along~$F_k/\Q$. Put $n=[F_k:\Q]$.
  We have
  \[ 
    \frac{1}{dn} \log \disc_A = d \log \rd_F + \frac{|S|}{n}f(e-1) \log p .
  \]
  Then, a good parameter $t>0$ does exist when 
  \[
    \log p > \left(e-\frac{1}{2}\right)\log d + \frac{1}{2}(e-1)\log p + O(e),
  \]
  \emph{i.e} when~$e<3$.
  Although there is no room when $e\geq 3$ (as noted in \cite[\S
  7.5.3]{Maire-Oggier}), we may  construct maximal orders good codes over finite
  fields by using quaternion algebras.
\end{proof}

\begin{rmk} 
  The existence of unramified towers with splitting conditions and \emph{small root discriminant} is central in the asymptotic
  analysis of number field codes, especially when we look for   codes  over $\F_{p^2}$ with  $p$ as small as possible 
  (see for example \cite{Guruswami}). For towers of number fields of small root
  discriminant see for example  \cite{HM-JSC}.
\end{rmk}

\begin{rmk} When $d$ is \emph{even}, our last computation shows how to construct asymptotically good additive  
 codes for the sum-rank distance from  algebras of   degree $d$ with
  alphabet~$\mat_{d/2}(\F_{p^2})$.
\end{rmk}


\end{document}